\theoremstyle{plain}
\newtheorem{theorem}{Theorem}
\newtheorem{proposition}{Proposition}
\newtheorem{lemma}{Lemma}
\newtheorem{corollary}{Corollary}
\newtheorem{example}{Example}
\theoremstyle{remark}
\newtheorem{definition}{Definition}
\def\PO{\operatorname{PO}}
\def\vp{\varphi}
\def\e{\varepsilon}
\def\M{\mathcal{M}}
\def\N{\mathscr{N}}
\def\N{\mathbb N}
\def\R{\mathbb R}
\def\C{\mathbb C}
\def\S{\mathbb{S}}
\newcommand{\cl}{\mathcal}
\newcommand{\lt}{L^p(\mathcal M,\tau)}
\title{Twisting non-commutative $L^p$ spaces}
\author{F\'elix Cabello S\'anchez}
\address{Departamento de Matem\'{a}ticas, Universidad de Extremadura,
Avenida de Elvas, 06071-Badajoz, Spain}
\email{fcabello@unex.es, http://kolmogorov.unex.es/\~{}fcabello}
\author{Jes\'us M. F. Castillo}
\address{Departamento de Matem\'{a}ticas, Universidad de Extremadura,
Avenida de Elvas, 06071-Badajoz, Spain}
\email{castillo@unex.es}
\author{Stanis\l aw Goldstein}
\address{Faculty of Mathematics and Computer Science, University of \L\'od\'z,
ul. Banacha 22, 90-238 \L\'od\'z, Poland} \email{goldstei@math.uni.lodz.pl}
\author{Jes\'us Su\'arez de la Fuente}
\address{Escuela Polit\'ecnica, Universidad de Extremadura,
Avenida de la Universidad, 10071-C\'aceres, Spain}
\email{jesus@unex.es}
\thanks{The research of the first, second and fourth author has been supported in part by project MTM2013-45643-C2-1-P, Spain.}
\thanks{2010 Mathematics Subject Classification 46L52, 46M15, 46B70}
\begin{document}
\noindent{\footnotesize \boxed{\text{\today}}\\[10pt]}

\begin{abstract}
{\bf The paper makes the first steps into the study of extensions (``twisted sums'') of noncommutative $L^p$-spaces regarded as Banach modules over the underlying von Neumann algebra $\M$. Our approach combines Kalton's description of extensions by centralizers (these are certain maps which are, in general, neither linear nor bounded) with a general principle, due to Rochberg and Weiss, saying that whenever one finds a given Banach space $Y$ as an intermediate space in a (complex) interpolation scale, one automatically gets a self-extension $
0\longrightarrow Y\longrightarrow X\longrightarrow Y \longrightarrow 0.$

For semifinite algebras, considering $L^p=L^p(\M,\tau)$ as an interpolation space between $\M$ and its predual $\M_*$ one arrives at a certain self-extension of $L^p$ that is a kind of noncommutative Kalton-Peck space and carries a natural bimodule structure. Some interesting properties of these spaces are presented.

For general algebras, including those of type III, the interpolation mechanism produces two (rather than one) extensions of one sided modules, one of left-modules and the other of right-modules. Whether or not one may find (nontrivial) self-extensions of bimodules in all cases is left open.}
\end{abstract}

\maketitle

\markboth{Cabello-S\'anchez, Castillo, Goldstein and Su\'arez de la Fuente}{Twisting non-commutative $L_p$ spaces}

\section*{Introduction}
In this paper we make the first steps into the study of extensions of noncommutative $L^p$-spaces. An extension (of $Z$ by $Y$) is a short exact sequence of Banach spaces and (linear, continuous) operators
\begin{equation}\label{s1}
0\longrightarrow Y\longrightarrow X \longrightarrow Z\longrightarrow 0.
\end{equation}
This essentially means that $X$ contains $Y$ as a closed subspace so that the corresponding quotient is (isomorphic to) $Z$.

We believe
that the convenient setting in studying extensions of $L^p$-spaces is not that of Banach spaces, but that of Banach modules over the underlying von Neumann algebra $\M$. Accordingly, one should require the arrows in (\ref{s1}) to be homomorphisms.

In this regard it is remarkable and perhaps a little ironic that, while the study of the module structure of general $L^p$-spaces goes back to its inception, the only papers where one can find some relevant information about extensions, namely \cite{k-jfa} and \cite{k-tams}, deliberately neglected this point.

Let us summarize the main results and explain the organization of the paper.

Section 1 contains some preliminaries. Section 2 deals with the tracial (semifinite) case. It is shown that whenever one has a reasonably ``symmetric'' self-extension of the commutative $L^p$ (the usual Lebesgue space of $p$-integrable functions on the line) one can get a similar self-extension
$$
0\longrightarrow L^p(\M,\tau)\longrightarrow X
\longrightarrow L^p(\M,\tau) \longrightarrow 0
$$
of bimodules over any semifinite von Neumann algebra $\M$, equipped with a trace $\tau$.

Our approach combines Kalton's description of extensions by centralizers (these are certain maps which are, in general, neither linear nor bounded) with a general principle, due to Rochberg and Weiss that we can express by saying that whenever one finds a given Banach space $Y$ as an intermediate space in a (complex) interpolation scale, one automatically gets a self-extension $
0\longrightarrow Y\longrightarrow X\longrightarrow Y \longrightarrow 0.
$

Thus for instance, considering $L^p(\M,\tau)$ as an interpolation space between $\M$ and its predual $\M_*$ one arrives at a certain self-extension of $L^p(\M,\tau)$ that we regard as a kind of noncommutative Kalton-Peck space. Some interesting properties of these spaces are presented.

In Section 3 we leave the tracial setting and we consider $L^p$-spaces over general (but $\sigma$-finite) algebras, including those of type III. In this case the interpolation trick still works but produces two (rather than one) extensions of one sided modules, one of left-modules and the other of right-modules. Whether or not one can find (nontrivial) self-extensions of bimodules in all cases is left open.

\section{Preliminaries}

\subsection{Extensions} Let $A$ be a Banach algebra. A quasi-Banach (left) module over $A$ is a quasi-Banach space $X$ together with a jointly continuous outer multiplication $A\times X\to X$ satisfying the traditional algebraic requirements.

 An extension of $Z$ by $Y$ is a short exact sequence of quasi-Banach modules and homomorphisms
\begin{equation}\label{s}
0\longrightarrow Y\stackrel{\imath}\longrightarrow X
\stackrel{\pi}\longrightarrow Z\longrightarrow 0.
\end{equation}
The open
mapping theorem guarantees that $\imath$ embeds  $Y$ as a closed
submodule of $X$ in such a way that the corresponding quotient is
isomorphic to $Z$. Two extensions $0 \to Y \to X_i \to Z \to 0$
($i=1,2$) are said to be equivalent if there exists a homomorphism
$u$ making commutative the diagram $$
\begin{CD}
 0@>>> Y@>>> X_1@>>> Z@>>> 0\\
 & & @| @VV u V @|\\
 0@>>> Y@>>> X_2@>>> Z@>>> 0\\
\end{CD} $$
By the five-lemma \cite[Lemma 1.1]{hs}, and the open
mapping theorem, $u$ must be an isomorphism. We say that (\ref{s})
 splits if it is equivalent to the trivial sequence $0\to Y \to Y
\oplus Z \to Z \to 0$. This just means that $Y$ is a complemented
submodule of $X$, that is, there is a homomorphism $X\to Y$ which
is a left inverse for the inclusion $Y\to X$; equivalently, there
is a homomorphism $Z\to X$ which is a right inverse for the
quotient $X\to Z$.

Operators and homomorphisms are assumed to be continuous. Otherwise we speak of linear maps and ``morphisms''.

Taking $A=\C$ one recovers extensions in the Banach space setting.

Every extension of (quasi-) Banach modules is also an extension of (quasi-) Banach spaces. Clearly, if an extension of modules is trivial, then so is the underlying extension of (quasi-) Banach spaces. Simple examples show that the converse is not true in general. A Banach algebra $A$ is amenable if every extension of Banach modules (\ref{s}) in which $Y$ is a dual module splits as long as it splits an an extension of Banach spaces. This is not the original definition but  an equivalent condition. The original definition reads as follows: $A$ is amenable if every continuous derivation from $A$ into a dual bimodule is inner. Here ``derivation'' means ``operator satisfying Leibniz's rule'' and has nothing to do with the derivations appearing in Section~\ref{cits}.

Every Banach space is a quasi-Banach space and it is possible that the middle space $X$ in (\ref{s}) is only a quasi-Banach space even if both $Z$ and $Y$ are Banach spaces (see \cite[Section 4]{kaltbook}). This will never occur in this paper, among other things because $X$ will invariably be a quotient of certain Banach space of holomorphic functions. Anyway, Kalton proved in \cite{kalt} that if $Z$ has nontrivial type $p>1$ and $Y$ is a Banach space, then $X$ must be locally convex and so isomorphic to a Banach space. In particular, any quasi-norm giving the topology of $X$ must be equivalent to a norm, and hence to the convex envelope norm. If $Z$ is super-reflexive the  proof is quite simple; see \cite{dummies}.

\subsection{Centralizers and the extensions they induce}\label{they}
Let us introduce the main tool in our study of extensions.

\begin{definition}\label{def:centralizer}
Let $Z$ and $Y$ be quasi-normed modules over the Banach algebra $A$ and let $\tilde Y$ be another module containing $Y$ in the purely algebraic sense.
A centralizer from $Z$ to $Y$ with ambient space $\tilde Y$ is a $\mathbb{C}$-homogeneous mapping 
 $\Omega: Z\to \tilde Y$ having the following properties.
\begin{itemize}
\item[(a)] It is quasi-linear, that is, there is a constant $Q$ so that if $f,g\in Z$, then $\Omega(f+g)-\Omega(f)-\Omega(g)\in Y$ and
$
\|\Omega(f+g)-\Omega(f)-\Omega(g)\|_Y\leq Q(\|f\|_Z+\|g\|_Z).
$

\item[(b)] There is a constant $C$ so that if $a\in A$ and $f\in Z$, then $\Omega(af)-a\Omega(f)\in Y$ and
$
\|\Omega(af)-a\Omega(f)\|_Y\leq C\|a\|_A\|f\|_Z.
$
\end{itemize}
\end{definition}

We denote by $Q[\Omega]$ the least constant for which (a) holds and by $C[\Omega]$ the least constant for which (b) holds.

We now indicate the connection between centralizers and extensions. Let $Z$ and $Y$ be quasi-Banach modules and $\Omega:Z\to\tilde Y$ is a centralizer from $Z$ to $Y$. Then
$$
Y\oplus_\Omega Z=\{(g,f)\in \tilde Y\times Z: g-\Omega f\in Y\}
$$
is a linear subspace of $ \tilde Y\times Z$ and $\|(g,f)\|_\Omega=\|g-\Omega f\|_Y+\|f\|_Z$ is a quasi-norm on it (here is the only point where the assumption about the homogeneity of $\Omega$ is used).
Moreover, the map $\imath: Y\to Y\oplus_\Omega Z$ sending $g$ to $(g,0)$ preserves the quasi-norm, while the map $\pi: Y\oplus_\Omega Z\to Z$ given as $\pi(g,f)=f$ is open, so that we have a short exact sequence of quasi-normed spaces
\begin{equation}\label{s2}
0\longrightarrow Y\stackrel{\imath}\longrightarrow Y\oplus_\Omega Z
\stackrel{\pi}\longrightarrow Z\longrightarrow 0
\end{equation}
with relatively open maps. This already implies that $Y\oplus_\Omega Z$ is complete, i.e., a quasi-Banach space.
Actually only the quasi-linearity of $\Omega$ is necessary here.
The estimate in (b) implies that the multiplication $a(g,f)=(ag,af)$ makes  $
Y\oplus_\Omega Z$ into a quasi-Banach module over $A$ in such a way that the arrows in
(\ref{s2}) become homomorphisms. Indeed,
$$
\|a(g,f)\|_\Omega= \|ag-\Omega(af)\|_Y+\|af\|_Z
=  \|ag-a\Omega f+ a\Omega f-\Omega(af)\|_Y+\|af\|_Z
\leq M \|a\|_A\|(g,f)\|_\Omega.
$$
We will always refer to Diagram~\ref{s2} as the extension (of $Z$ by $Y$) induced by $\Omega$.

It is easily seen that two centralizers $\Omega$ and $\Phi$ (acting between the same sets, say $Z$ and $\tilde Y$) induce equivalent extensions if and only if there is a morphism $h:Z\to \tilde Y$ such that $\|\Omega(f)-\Phi(f)-h(f)\|_Y\leq K\|f\|_Z$. If the preceding inequality holds for $h=0$  we say that $\Omega$ and $\Phi$ are equivalent and we write $\Omega\approx\Phi$. In particular $\Omega$ induces a trivial extension if and only if  $\|\Omega(f)-h(f)\|_Y\leq K\|f\|_Z$ for some morphism $h:Z\to \tilde Y$. In this case we say that $\Omega$ is a trivial centralizer.

The corresponding definitions for right modules and bimodules are obvious. Thus, for instance, we define bicentralizers from $Z$ to $Y$ (which are now assumed to be Banach bimodules over the Banach algebra $A$) by requiring $\tilde Y$ to be also a bimodule and replacing the estimate in Definition~\ref{def:centralizer}(b) by
$$
\|\Omega(azb)-a\Omega(z)b\|_Y\leq C\|a\|_A\|f\|_Z\|b\|_A\quad\quad(a,b\in A, z\in Z).
$$

We insist that we are interested in the case of Banach spaces here, so one can assume $Z$ and $Y$ to be Banach spaces. However, the Ribe function $\|\cdot\|_\Omega$ will be only a quasi-norm on $Y\oplus_\Omega Z$, even if it is equivalent to a true norm. See the paragraph closing Section 1.1 and \cite[Appendix 1.9]{cg}.

\subsection{Push-outs and extensions}
The push-out construction appears naturally when one considers two operators defined on the same space. Given operators $\alpha:Y\to A$ and $\beta:Y\to B$, the associated push-out diagram is
\begin{equation}\label{po-dia}
\begin{CD}
Y@>\alpha>> A\\
@V \beta VV @VV \beta' V\\
B @> \alpha' >> \PO
\end{CD}
\end{equation}
Here, the push-out space $\PO=\PO(\alpha,\beta)$ is the quotient of the direct sum $A\oplus B$ (with the sum norm, say) by $S$, the closure of the subspace $\{(\alpha y,-\beta y): y\in Y\}$. The map $\alpha'$ is given by the inclusion of $B$ into $A\oplus B$ followed by the natural quotient map $A\oplus B\to (A\oplus B)/S$, so that $\alpha'(b)=(0,b)+S$ and, analogously, $\beta'(a)=(a,0)+S$.

The diagram (\ref{po-dia}) is commutative: $\beta'\alpha=\alpha'\beta$. Moreover, it is ``minimal'' in the sense of having the following universal property: if $\beta'':A\to C$ and $\alpha'':B\to C$ are operators such that $\beta''\alpha=\alpha''\beta$, then there is a unique operator $\gamma:\PO\to C$ such that $\alpha''=\gamma\alpha'$ and $\beta''=\gamma\beta'$. Clearly, $\gamma((a, b) + S) = \beta''(a)+\alpha''(b)$ and one has $\|\gamma\|\leq \max \{\|\alpha''\|, \|\beta''\|\}$.

Suppose we are given an extension (\ref{s}) and an operator $t: Y\to B$. Consider the push-out of the couple $(\imath, t)$ and draw the corresponding arrows:
\begin{equation*}
\begin{CD}
0  @>>>  Y @>\imath>> X @>>> Z @>>>0 \\
  &  &   @Vt VV  @VVt'V
   & \\
& &B @>\imath'>> \PO  \\
\end{CD}
\end{equation*}
Clearly, $\imath'$ is an isomorphic embedding.
Now, the operator $\pi:X\to Z$ and the null operator $n:B\to Z$ satisfy the identity $\pi\imath=nt=0$, and the universal property of push-outs gives a unique operator $\varpi: \PO\to Z$ making the following diagram commutative:
\begin{equation}\label{po-seq}
\begin{CD}
0  @>>>  Y @>\imath>> X @>\pi>> Z @>>>0 \\
  &  &   @Vt VV  @VVt'V
   @| \\
0  @>>> B @>\imath'>> \PO  @>\varpi >> Z@>>> 0
\end{CD}
\end{equation}
Or else, just take $\varpi((x,b)+S)=\pi(x)$, check
commutativity, and discard everything but the definition of $\PO$.
Elementary considerations show that the lower sequence in the preceding diagram is exact.
That sequence will we referred to as the push-out sequence.
The universal property  of push-out diagrams yields:

\begin{lemma}\label{crit-po} With the above notations,  the push-out sequence splits if and only if $t$ extends to $X$, that is, there is an operator $T:X\to B$ such that $T\imath=t$.\hfill$\square$
\end{lemma}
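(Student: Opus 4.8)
The plan is to prove both implications using the universal property of the push-out together with Lemma~\ref{crit-po}'s own setup, i.e.\ the pieces already assembled in Diagram~\ref{po-seq}.

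First I would prove the ``only if'' direction. Assume the push-out sequence $0\to B\to \PO\xrightarrow{\varpi} Z\to 0$ splits, so there is a homomorphism (operator) $s:\PO\to B$ with $s\imath'=\mathrm{id}_B$. Then I would simply set $T=s t':X\to B$, where $t':X\to\PO$ is the map appearing in Diagram~\ref{po-seq}. Using the commutativity relation $t'\imath=\imath' t$ already recorded above, one computes $T\imath=s t'\imath=s\imath' t=t$, so $t$ extends to $X$. This direction is essentially a one-line diagram chase.

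The ``if'' direction is the substantive one. Assume there is an operator $T:X\to B$ with $T\imath=t$. The goal is to produce a retraction $\PO\to B$, equivalently a section $Z\to\PO$ of $\varpi$. The natural candidate for the retraction is obtained from the universal property: the operators $T:X\to B$ and $\mathrm{id}_B:B\to B$ satisfy $T\imath=t=\mathrm{id}_B\circ t$, so there is a unique operator $r:\PO\to B$ with $r t'=T$ and $r\imath'=\mathrm{id}_B$. The last identity already says that $r$ is a left inverse for $\imath'$, hence the lower sequence splits. Concretely $r((x,b)+S)=T(x)+b$, which one checks is well defined on $\PO=(X\oplus B)/S$ precisely because $T\imath=t$ kills the subspace $\{(\imath y,-t y):y\in Y\}$.

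The one point that deserves care — and the only place where anything can go wrong — is that all the maps involved must be \emph{homomorphisms} of modules, not merely operators, since we are working in the module category; but the universal property of the push-out as stated produces the mediating map $\gamma$ (here $r$) in the same category in which $\PO$ is formed, and the explicit formula $r((x,b)+S)=T(x)+b$ is manifestly module-linear when $T$ is. So no extra work is needed beyond recording this. Thus the proof consists of the two diagram chases above, and I expect no genuine obstacle: the lemma is a direct corollary of the universal property already proved for push-outs.
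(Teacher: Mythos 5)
Your proof is correct and is exactly the argument the paper has in mind: the lemma is stated with no written proof precisely because it follows from the universal property of the push-out in the way you describe, with $T=s t'$ in one direction and the mediating map $r$ induced by $T$ and $\mathrm{id}_B$ (equivalently $r((x,b)+S)=T(x)+b$) in the other. Your remark that $r$ is automatically a homomorphism in the ambient category is also the right observation and requires no further work.
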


\subsection{Complex interpolation and twisted sums}\label{cits}
These lines explain the main connection between interpolation and twisted sums we use throughout the paper. General references are  \cite{rochberg-weiss, CJRW, kaltonmontgomery, k-tams, carro}. Let $(X_0,X_1)$ be a compatible couple of complex Banach spaces. This means that both $X_0$ and $X_1$ are embedded into a third topological vector space $W$ and so it makes sense to consider its sum $\Sigma=X_0+X_1=\{w\in W: w=x_0+x_1\}$ which we furnish with the norm $\|w\|_\Sigma=\inf\{\|x_0\|_0+\|x_1\|: w=x_0+x_1\}$ as well as the intersection $\Delta=X_0\cap X_1$ with the norm $\|x\|_\Delta=\max\{\|x\|_0,\|x\|\}$.
We attach a certain space of analytic functions to $(X_0,X_1)$ as follows.

Let $\mathbb S$ denote the closed strip $\mathbb S=\{z\in \mathbb C: 0\leq \Re z\leq 1\}$, and $\mathbb S^\circ$ its interior.
We denote by $\cl{G}=\cl G(X_0,X_1)$  the space of functions $g:\mathbb S\to \Sigma$ satisfying the following
conditions:
\begin{enumerate}
\item $g$ is $\|\cdot\|_\Sigma$-bounded ;

\item $g$ is $\|\cdot\|_\Sigma$-continuous on $\mathbb S$ and
$\|\cdot\|_\Sigma$-analytic on $\mathbb S^\circ$;

\item $g(it)\in X_0, g(it+1)\in X_1$ for each $t\in\R$;

\item the map $t\mapsto g(it)$ is $\|\cdot\|_0$-bounded and
$\|\cdot\|_0$-continuous on $\R$;

\item the map $t\mapsto g(it+1)$ is $\|\cdot\|_1$-bounded and
$\|\cdot\|_1$-continuous on $\R$.
\end{enumerate}

Then $\cl{G}$ is a Banach space under the norm
$
\|g\|_\mathcal G= \sup\{\|g(j+it)\|_j: j=0,1; t\in\R   \}.
$

For $\theta\in[0,1]$, define the interpolation space
$
X_\theta=[X_0,X_1]_\theta=\{x\in\Sigma: x=g(\theta) \text{ for some } g\in\cl G\}
$
with the norm $\|x\|_\theta=\inf\{\|g\|_\cl G: x=g(\theta)\}$. We remark that $
[X_0,X_1]_\theta$ is the quotient of $\cl G$ by $\ker\delta_\theta$, the closed subspace of functions vanishing at $\theta$, and so it is a Banach space.

Now, the basic result is the following.

\begin{lemma}\label{mechanism}
With the above notations, the derivative $\delta'_\theta: \mathcal G\to \Sigma$ is bounded from $\ker \delta_\theta$ onto $X_\theta$ for $0<\theta<1$.
\end{lemma}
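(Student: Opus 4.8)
The plan is to unwind the definitions of $\delta_\theta$, $\delta'_\theta$ and the norms on $\cl G$ and $X_\theta$, and then produce, for a given $x\in X_\theta$, a function $g\in\ker\delta_\theta$ whose derivative at $\theta$ is $x$ and whose norm is controlled by $\|x\|_\theta$. First I would note that $\delta'_\theta$ is certainly bounded as a map $\cl G\to\Sigma$: by the Cauchy integral formula on a small circle around $\theta$ (which stays inside $\mathbb S^\circ$ since $0<\theta<1$), one gets $\|g'(\theta)\|_\Sigma\leq c_\theta\|g\|_{\cl G}$ with $c_\theta$ depending only on the distance from $\theta$ to the boundary. So the content of the lemma is really the \emph{surjectivity} of $\delta'_\theta$ restricted to $\ker\delta_\theta$, together with the implicit openness/boundedness onto $X_\theta$ with its own norm.

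For surjectivity, I would start from $x\in X_\theta$ with, say, $\|x\|_\theta<1$, so there is $h\in\cl G$ with $h(\theta)=x$ and $\|h\|_{\cl G}<1$. The naive attempt $g=h$ fails because $h$ need not vanish at $\theta$; the fix is the standard trick of multiplying by a conformal factor that vanishes at $\theta$ and has derivative $1$ there. Concretely, let $\varphi:\mathbb S\to\overline{\mathbb D}$ be the conformal map of the strip onto the disc with $\varphi(\theta)=0$, and set
\[
g(z)=\frac{\varphi(z)}{\varphi'(\theta)}\,h(z).
\]
Then $g\in\cl G$ (the scalar factor $\varphi/\varphi'(\theta)$ is bounded and analytic on $\mathbb S$, continuous up to the boundary, so it preserves all five defining conditions), $g(\theta)=0$, i.e. $g\in\ker\delta_\theta$, and $g'(\theta)=\tfrac{\varphi'(\theta)}{\varphi'(\theta)}h(\theta)+\tfrac{\varphi(\theta)}{\varphi'(\theta)}h'(\theta)=h(\theta)=x$. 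Thus $\delta'_\theta$ maps $\ker\delta_\theta$ onto $X_\theta$ as a set. Moreover $\|g\|_{\cl G}\leq \|\varphi/\varphi'(\theta)\|_\infty\,\|h\|_{\cl G}\leq K_\theta\|x\|_\theta$ for a constant $K_\theta=\sup_{z\in\mathbb S}|\varphi(z)/\varphi'(\theta)|$, which gives a bounded linear \emph{right inverse} in the quotient sense and hence shows $\delta'_\theta:\ker\delta_\theta\to X_\theta$ is bounded (with the $X_\theta$-norm on the target) and onto.

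The step I expect to require the most care is verifying that the multiplier $z\mapsto\varphi(z)/\varphi'(\theta)$ genuinely keeps us inside $\cl G$, in particular that multiplication by a bounded scalar analytic function continuous on $\mathbb S$ preserves conditions (1)--(5), and that $g'(\theta)$ really lands in $\Sigma$ and is computed by the product rule in the $\|\cdot\|_\Sigma$-topology (this is where boundedness of $\delta'_\theta$ on all of $\cl G$, established via Cauchy's formula, is used to justify differentiating under the relevant limits). One should also double-check the direction of the claim: $\delta'_\theta$ is \emph{onto} $X_\theta$ but typically not injective on $\ker\delta_\theta$, so ``bounded from $\ker\delta_\theta$ onto $X_\theta$'' is exactly the assertion that the induced map $\ker\delta_\theta\to X_\theta$ is a surjective bounded operator — which is what the conformal-factor construction delivers. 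Once the multiplier lemma is in hand, the rest is the routine bookkeeping sketched above.
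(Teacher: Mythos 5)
Your surjectivity argument is fine and is essentially the paper's: multiply an almost-optimal $h$ with $h(\theta)=x$ by $\varphi/\varphi'(\theta)$, where $\varphi$ is the conformal map of $\mathbb S^\circ$ onto the disc vanishing at $\theta$. But there is a genuine gap in the other half of the statement. Having a bounded right inverse (a selection $x\mapsto g_x$ with $\delta'_\theta g_x=x$ and $\|g_x\|_{\cl G}\leq K_\theta\|x\|_\theta$) gives surjectivity and openness; it does \emph{not} give boundedness of $\delta'_\theta$ on $\ker\delta_\theta$, because boundedness is a statement about \emph{every} $g\in\ker\delta_\theta$, not just those of the special form $\varphi h/\varphi'(\theta)$. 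Worse, your Cauchy-formula estimate only controls $\|g'(\theta)\|_\Sigma$, whereas the lemma asserts that $g'(\theta)$ lands in $X_\theta$ and is controlled in the (generally much stronger) interpolation norm $\|\cdot\|_{X_\theta}$; nothing in your argument shows that $g'(\theta)\in X_\theta$ for an arbitrary $g\in\ker\delta_\theta$. This is not a cosmetic point: the later applications (quasi-linearity of $\Omega$, independence of the choice of extremals) use precisely the estimate $\|g'(\theta)\|_{X_\theta}\leq C\|g\|_{\cl G}$ for arbitrary $g$ vanishing at $\theta$.

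The missing idea is the \emph{factorization} in the reverse direction: if $g\in\cl G$ vanishes at $\theta$, write $g=\varphi h$ with $h=g/\varphi$. Since $\varphi$ has a simple zero at $\theta$ and $|\varphi|=1$ on $\partial\mathbb S$, the function $h$ again belongs to $\cl G$ with $\|h\|_{\cl G}=\|g\|_{\cl G}$ (the singularity at $\theta$ is removable and the boundary values have unchanged norms). Then the product rule gives $g'(\theta)=\varphi'(\theta)h(\theta)$, and $h(\theta)\in X_\theta$ with $\|h(\theta)\|_{X_\theta}\leq\|h\|_{\cl G}$ by the very definition of the interpolation norm, so $\|g'(\theta)\|_{X_\theta}\leq|\varphi'(\theta)|\,\|g\|_{\cl G}$. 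In short: you multiplied by $\varphi$ to prove surjectivity, but you also need to \emph{divide} by $\varphi$ to prove boundedness.
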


\begin{proof}
For a fixed $\theta\in]0,1[$,
let $\varphi$ be a conformal map of $\mathbb S^\circ$ onto the open unit disc sending $\theta$ to $0$, for instance
that given by
\begin{equation}\label{thatgiven}
\varphi(z)=\frac{\exp(i\pi z)-\exp(i\pi\theta)}
{\exp(i\pi z)-\exp(-i\pi\theta)}\quad \text{ for } z\in \mathbb S.
\end{equation}
If $g\in\mathcal G$ vanishes at $\theta$, then one has $g=\varphi h$, with $h\in\mathcal G$ and $\|h\|_\mathcal G= \|g\|_\mathcal G$. Therefore, $g'(\theta)=\varphi'(\theta)h(\theta)$, so $g'(\theta)\in X_\theta$ and
$$
\|g'(\theta)\|_{X_\theta}=
|\varphi'(\theta)|\|h(\theta)\|_{X_\theta}\leq
|\varphi'(\theta)|\|h\|_\mathcal G=
|\varphi'(\theta)|\|g\|_\mathcal G.
$$
Hence $\|\delta_\theta':\ker \delta_\theta\to X_\theta\|\leq |\varphi'(\theta)|$. Notice that $|\varphi'(\theta)|= \pi/(2\sin(\pi\theta))$ when $\varphi$ is given by (\ref{thatgiven}).

Let us see that $\delta_\theta'$ maps $\ker\delta_\theta$ onto $X_\theta$. Take $x\in X_\theta$, with $\|x\|_\theta=1$ and choose $g\in \cl G$ so that $g(\theta)=x$, with $\|g\|_\cl G\leq 1+\epsilon$. Then $h=\varphi g$ belongs to $\ker\delta_\theta$ and $h'(\theta)=\varphi'(\theta)x$.
\end{proof}

In this way, for each $\theta\in]0,1[$ we have a push-out diagram
\begin{equation}\label{basicpo}
\begin{CD}
\ker\delta_\theta @>>> \mathcal G @>\delta_\theta >>X_\theta\\
@V\delta'_\theta VV @VVV @| \\
X_\theta@>>> \PO @>>> X_\theta
\end{CD}
\end{equation}
whose lower row is a self extension of $X_\theta$. The derivation associated with the preceding diagram is the map $\Omega:X_\theta\to\Sigma$ obtained as follows: given $x\in X_\theta$ we choose $g=g_{x}\in\mathcal G$ (homogeneously) such that $x=g(\theta)$ and $\|g\|_\mathcal G\leq (1+\epsilon)\|x\|_{X_\theta}$ for small $\epsilon >0$ and we set $\Omega(x)=g'(\theta)\in\Sigma$. (Note that $\Omega(x)$ lies in $X_\theta$ at least for $x\in\Delta=X_0\cap X_1$.) Homogeneously means that if $g$ is the function attached to $x$ and $\lambda$ is a complex number, then the function attached to $\lambda x$ is $\lambda g$ -- this makes $\Omega:X_\theta\to \Sigma$ homogeneous.

Needless to say, the map $\Omega$ depends on the choice of $g$. However, if $\tilde\Omega(x)$ is obtained as the derivative (at $\theta$) of another $\tilde g\in \mathcal G$ such that $\tilde g(\theta)=x$ and $\|\tilde g\|_\mathcal G\leq M\|x\|$, then $\tilde g-g$ vanishes at $\theta$, so (by Lemma~\ref{mechanism})
$$
\|\tilde\Omega(x)- \Omega(x)\|_{X_\theta}=\|\delta'_\theta(\tilde g-g)\|_{X_\theta}\leq \|\delta_\theta':\ker \delta_\theta\to X_\theta\| (M+1+\epsilon)\|x\|_{X_\theta},
$$
and thus $\tilde\Omega\approx\Omega$.

\begin{lemma}\label{justdefined}
The just defined map $\Omega$ is quasi-linear on $X_\theta$. The extension induced by  $\Omega$ is (equivalent to) the push-out sequence in (\ref{basicpo}).
\end{lemma}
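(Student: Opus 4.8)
The statement has two independent assertions --- that $\Omega$ is quasi-linear, and that the extension it induces is equivalent to the push-out sequence in (\ref{basicpo}) --- and I would treat them in that order.

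For quasi-linearity, fix $x,y\in X_\theta$ and let $g_x,g_y,g_{x+y}\in\mathcal G$ be the homogeneously chosen representatives with $g_u(\theta)=u$ and $\|g_u\|_{\mathcal G}\le (1+\epsilon)\|u\|_{X_\theta}$. The function $G=g_x+g_y-g_{x+y}$ then lies in $\mathcal G$, vanishes at $\theta$ (so $G\in\ker\delta_\theta$), satisfies $\|G\|_{\mathcal G}\le 2(1+\epsilon)(\|x\|_{X_\theta}+\|y\|_{X_\theta})$ by the triangle inequality in $X_\theta$, and has $G'(\theta)=\Omega(x)+\Omega(y)-\Omega(x+y)$ since differentiation at $\theta$ is linear and $\Omega(u)=g_u'(\theta)$. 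Lemma~\ref{mechanism} applied to $G$ gives at once $G'(\theta)=\delta_\theta'(G)\in X_\theta$ with $\|G'(\theta)\|_{X_\theta}\le|\varphi'(\theta)|\,\|G\|_{\mathcal G}$, which is precisely the quasi-linearity estimate, with $Q[\Omega]\le 2(1+\epsilon)|\varphi'(\theta)|$. This step is routine.

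For the identification, write the $\Omega$-induced extension as $0\to X_\theta\stackrel{\imath}\longrightarrow X_\theta\oplus_\Omega X_\theta\stackrel{\pi}\longrightarrow X_\theta\to 0$ and the push-out sequence of (\ref{basicpo}) as $0\to X_\theta\stackrel{\imath'}\longrightarrow\PO\stackrel{\varpi}\longrightarrow X_\theta\to 0$, where $\PO=(\mathcal G\oplus X_\theta)/S$ with $S$ the closure of $\{(k,-\delta_\theta'(k)):k\in\ker\delta_\theta\}$, and recall $\imath'(b)=(0,b)+S$, $\varpi((g,b)+S)=g(\theta)$. I would build an explicit equivalence from the assignment $(g,b)\mapsto(g'(\theta)+b,\,g(\theta))$ on $\mathcal G\oplus X_\theta$. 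The point that makes the value land in $X_\theta\oplus_\Omega X_\theta$ is that $g-g_{g(\theta)}\in\ker\delta_\theta$, so by Lemma~\ref{mechanism} the ``defect'' $g'(\theta)-\Omega(g(\theta))=\delta_\theta'(g-g_{g(\theta)})$ actually lies in $X_\theta$ (not merely in $\Sigma$), with $\|g'(\theta)-\Omega(g(\theta))\|_{X_\theta}\le(2+\epsilon)|\varphi'(\theta)|\,\|g\|_{\mathcal G}$; together with $\|g(\theta)\|_{X_\theta}\le\|g\|_{\mathcal G}$ this also shows the assignment is bounded into $(X_\theta\oplus_\Omega X_\theta,\|\cdot\|_\Omega)$. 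Since $(k,-\delta_\theta'(k))=(k,-k'(\theta))$ with $k(\theta)=0$ is sent to $(0,0)$ for every $k\in\ker\delta_\theta$, the assignment kills the dense subspace of $S$, hence $S$ itself by continuity, and so descends to a bounded operator $\Theta:\PO\to X_\theta\oplus_\Omega X_\theta$. A direct check from the formulas gives $\Theta\imath'=\imath$ (indeed $(0,b)+S\mapsto(b,0)$) and $\pi\Theta=\varpi$ (indeed $(g,b)+S\mapsto g(\theta)$), so $\Theta$ is a morphism of extensions that is the identity on the subspace and the quotient; by the five-lemma and the open mapping theorem (as in the discussion of equivalent extensions in Section~1.1) it is an isomorphism, which is the claimed equivalence. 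One may alternatively exhibit the inverse explicitly, $(w,x)\mapsto(g_x,\,w-\Omega x)+S$, and verify that both compositions are the identity.

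I do not expect a serious obstacle. The one place needing care is the well-definedness and boundedness of $\Theta$: everything rests on feeding the right function $g-g_{g(\theta)}$ into Lemma~\ref{mechanism} so that the defect $g'(\theta)-\Omega(g(\theta))$ is recognized as an element of $X_\theta$, and on the fact that the representatives $g_u$ were selected homogeneously, so that $\Omega$ really is the homogeneous map whose induced extension is under discussion. The remaining norm bookkeeping is just the triangle inequality together with the bound $\|\delta_\theta':\ker\delta_\theta\to X_\theta\|\le|\varphi'(\theta)|$ from Lemma~\ref{mechanism}.
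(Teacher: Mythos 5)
Your argument is correct and is essentially the paper's: the crux in both is the observation that for any $f\in\mathcal G$ the defect $f'(\theta)-\Omega(f(\theta))=\delta_\theta'(f-g_{f(\theta)})$ lies in $X_\theta$ by Lemma~\ref{mechanism}, so that $f\mapsto(f'(\theta),f(\theta))$ lands boundedly in $X_\theta\oplus_\Omega X_\theta$. The only (cosmetic) difference is that the paper obtains the comparison map by invoking the universal property of the push-out, whereas you unwind that property into the explicit formula $(g,b)+S\mapsto(g'(\theta)+b,g(\theta))$ and verify well-definedness by hand; you also spell out the quasi-linearity estimate that the paper dismisses as straightforward.
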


\begin{proof}  That $\Omega$ is quasi-linear is straightforward from Lemma~\ref{mechanism}.

As for the second part, look at the basic Diagram~\ref{basicpo}.
Consider the map $(\delta_\theta',\delta_\theta):\cl G\to X_\theta\oplus_\Omega X_\theta$ given by $(\delta_\theta',\delta_\theta)(f)=(f'(\theta), f(\theta))$. Notice that $(f'(\theta), f(\theta))$ belongs to $X_\theta\oplus_\Omega X_\theta$ for every $f\in\cl G$. Indeed, letting $x=f(\theta)$ we have $f'(\theta)-\Omega(f(\theta))=\delta_\theta'(f-g_x)\in X_\theta$. Moreover,
$$
\|(f'(\theta), f(\theta))\|_\Omega=\|  \delta_\theta'(f-g_x)  \|_\theta+\|f(\theta)\|_\theta\leq M\|f\|_\cl G.
$$
There is an obvious map $\imath: X_\theta\to X_\theta\oplus_\Omega X_\theta$ sending $x$ to $(x,0)$. If $f\in\ker\delta_\theta$ one has
$$
(\delta_\theta',\delta_\theta)(f)=(f'(\theta),0)=\imath\delta_\theta'(f)
$$
and the universal property of the push-out construction yields an operator $u$ making commutative the following diagram
$$
\xymatrix{
\ker\delta_\theta \ar[r] \ar[d]_{\delta'_\theta} & \mathcal G \ar[r]^{\delta_\theta} \ar[d] & X_\theta \ar@{=}[d]\\
X_\theta\ar[r] \ar@{=}[d] & \PO \ar[d] \ar[r] & X_\theta \ar@{=}[d]\\
X_\theta \ar[r] & X_\theta\oplus_\Omega X_\theta \ar[r]&  X_\theta\\
}
$$
This completes the proof.
\end{proof}

The preceding argument is closely related to the observation, due to Rochberg and Weiss \cite{rochberg-weiss}, that
$
X_\theta\oplus_\Omega X_\theta= \cl G/(\ker\delta_\theta\cap \ker\delta_\theta')=\{(f'(\theta), f(\theta)): f\in\cl G\},
$
where the third space carries the obvious (infimum) norm.

An  important feature of the derivation process is that if we start with a couple $(X_0,X_1)$ of Banach modules over an algebra $A$ (this terminology should be self-explanatory by now), then the diagram (\ref{basicpo}) lives in the category of Banach modules and $\Omega$ is a centralizer over $A$.

\section{The tracial (semifinite) case}

\subsection{Some special properties of centralizers on $L^p(\R^+)$}
In this Section we introduce the spaces of measurable functions we shall use along the paper.
Our default measure space is the half line $\R^+=(0,\infty)$. We write $\mathscr B$ for the algebra of Borel sets of $\R^+$ and we denote by $\lambda$ the Lebesgue measure on $\mathscr B$.

Let $L^0$ be the space of all (Borel) measurable functions $f:\R^+\to \C$ equipped with the topology of convergence in measure on sets of finite  measure.
Here we apply the usual convention of identifying functions agreeing almost everywhere. According to Lindenstrauss and Tzafriri \cite[Definition 1.b.17, p. 28]{LindTzaf}, a  K\"othe space on $\R^+$ is a linear subspace $X$ of $L^0$ consisting of locally integrable functions, equipped with a monotone norm (if $f\in X$ and $|g|\leq |f|$ almost everywhere, then $g\in X$ and $\|g\|_X\leq \|f\|_X$) rendering it complete and containing the characteristic function of each Borel set of finite measure. A symmetric space is a K\"othe space $X$ satisfying:
\begin{itemize}
\item If $|f|$ and $|g|$ have the same distribution and $f\in X$, then $g\in X$ and $\|g\|_X=\|f\|_X$.
\item The Fatou property: if $(f_n)$ is an increasing sequence of nonnegative functions of $X$ converging almost everywhere to $f$ and $\sup_n\|f_n\|_X<\infty$, then $f\in X$ and $\|f\|_X=\lim_n\|f_n\|_X$.
\end{itemize}

Of course, if $u$ is a measure-preserving automorphism of $\mathbb R^+$, then the mapping $f\mapsto u^\circ(f)=f\circ u$ defines an isometry on every symmetric space.
We have included the Fatou property in the definition to avoid any difficulty when dealing with spaces of operators. The present definition guarantees that our symmetric spaces are both ``fully symmetric'' (in the sense of \cite{dodds}) and ``rearrangement invariant'' in the sense of \cite{LindTzaf} and \cite{garling}; anyway see cite \cite{kalton-sukochev} for a discussion and related results. If $X$ is a symmetric space, then $L^\infty\cap L^1\subset X\subset L^\infty+L^1$ and the inclusion are continuous; see \cite[Theorem 7.4.2]{garling} for a proof.

It is clear from the definition that every K\"othe space $X$ is an $L^\infty$-module under ``pointwise'' multiplication which turns out to be a submodule of $L^0$.
Let $\Phi: X\to L^0$ be an $L^\infty$-centralizer on $X$. Then $\Phi$ is said to be:
\begin{itemize}
\item Real if it takes real functions to real functions.
\item Symmetric if ($X$ is symmetric and) there is a constant $S$ so that, whenever $u$ is a measure-preserving automorphism of $\R^+$ one has $\|\Phi(u^\circ f)- u^\circ (\Phi f)\|_X\leq S\|f\|_X$.
\item Lazy if, whenever $\mathscr A$ is a $\sigma$-subalgebra of $\mathscr B$ and $f\in X$ is $\mathscr A$-measurable, $\Phi(f)$ is  $\mathscr A$-measurable.
\end{itemize}

Observe that $\Phi$ is lazy if and only if, for every $f\in X$, the function $\Phi(f)$ is measurable with respect to the $\sigma$-algebra generated by $f$, namely $\mathscr A(f)=\{f^{-1}(A): A\text{ is a Borel subset of } \mathbb C\}$. Also, if $\Phi$ is lazy and  $f=\sum_{k=1}^\infty t_k1_{A_k}$ is a function taking only countably many values ($\sigma$-simple from now on), one has $\Phi(f)= \sum_{k=1}^\infty s_k1_{A_k}$ for certain sequence of scalars $(s_k)$.
\medskip

Important examples of  centralizers are given as follows (see \cite{k-memoir}, Section 3 and specially Theorem 3.1). Let $\varphi:\mathbb R^2\to\mathbb \C$ be a Lipschitz function. Then the map $L^p\to L^0$ given  by
\begin{equation}\label{important}
f\longmapsto f\varphi\left(\log\frac{|f|}{\|f\|_p}, \log r_{f}\right).
\end{equation}
is a (symmetric) centralizer on $L^p$ which is real when $\vp$ is real-valued. Here $r_f$ is the so called rank-function of $f\in L^0$ defined by
$$
r_f(t)=\lambda\{s\in\R^+: |f(s)|>|f(t)| \text{ or } s\leq t \text{ and } |f(s)|=|f(t)|\},
$$
which arises in real interpolation (cf. \cite{jrw}).

For what this paper is concerned, the crucial result on $L^\infty$-centralizers is the following.

\begin{theorem}[Kalton \cite{k-tams}, Theorem 7.6]\label{crucial}
There is a (finite) constant $K$ so that whenever
$1 < p \leq2$ and $X$ is a $p$-convex and $q$-concave K\"othe function space with
 $p^{-1} +q^{-1}=1$ and $\Phi$ is a real centralizer on $X$ with $C[\Phi] < 200/q$ then there is a
pair of K\"othe function spaces $(X_0, X_1)$ so that $X = [X_0, X_1]_{1/2}$ (with equivalent norms) and if $\Omega:X\to L^0$ is the corresponding derivation,
then $\|\Phi(f)-\Omega(f)\| \leq K\|f\|$ for $f\in X$. In particular $\Phi\approx\Omega$.

If $\Phi$ is symmetric, then $X_0$ and $X_1$ can be taken to be symmetric.
\end{theorem}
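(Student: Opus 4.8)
The plan is to produce the couple $(X_0,X_1)$ by hand, using $\Phi$ itself as a blueprint for the analytic functions a derivation would employ, and then to show the resulting derivation is equivalent to $\Phi$. Since the conclusion is only up to the relation $\approx$, and since a centralizer is determined up to $\approx$ by its values on $\sigma$-simple functions, I would first pass to simple functions, and I would feel free to rescale and to replace $\Phi$ by any convenient representative of its $\approx$-class. The starting observation, already visible in the Kalton--Peck example, is that a centralizer ``wants'' to be the derivative at $\theta=\tfrac12$ of the analytic family
\[
F_f(z)=f\exp\!\big((z-\tfrac12)\,\omega_f\big),\qquad \omega_f:=\Phi(f)/f\ \text{ on }\{f\neq 0\},\quad F_f\equiv 0\ \text{ on }\{f= 0\}.
\]
Indeed $F_f(\tfrac12)=f$ and $F_f'(\tfrac12)=\Phi(f)$ exactly. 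Because $\Phi$ is \emph{real}, $\omega_f$ is a genuine real-valued measurable function whenever $f$ is real, so the ``edge weights'' $\exp(\pm\tfrac12\Re\omega_f)$ of $F_f$ on $\partial\mathbb S$ behave like honest lattice weights; this is where the hypothesis that $\Phi$ be real enters.

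Next I would define the boundary spaces as the smallest K\"othe lattices making every $F_f$ admissible. Concretely, for $j=0,1$ put
\[
\|g\|_{X_j}=\inf\Big\{\textstyle\sum_n\|f_n\|_X\, :\, |g|\le\sum_n|F_{f_n}(j+it_n)|\ \text{ for some }f_n\in X,\ t_n\in\R\Big\},
\]
and let $X_j=\{g\in L^0:\|g\|_{X_j}<\infty\}$. Monotonicity of $\|\cdot\|_{X_j}$ is built in; the Fatou property and completeness of $X_j$ are inherited from those of $X$ by a diagonal argument; and one must check that $\|\cdot\|_{X_j}$ does not degenerate, i.e.\ that it is finite on characteristic functions of finite-measure sets and strictly positive on every nonzero $g$. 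This non-degeneracy is the crux, and it is exactly here that the quantitative hypotheses are consumed: $p$-convexity and $q$-concavity of $X$ with $p^{-1}+q^{-1}=1$ and $1<p\le 2$, together with the smallness $C[\Phi]<200/q$, yield Kalton's pointwise/averaged control of the exponent (morally $|\omega_f|\lesssim 1+\big|\log(|f|/\|f\|_X)\big|$), and this control is what keeps the weights $\exp(\pm\tfrac12\Re\omega_f)$ from collapsing or blowing up the infimum defining $\|\cdot\|_{X_j}$, and also guarantees that each $F_f$ is a bona fide element of $\mathcal G(X_0,X_1)$.

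With the couple in hand I would verify $X=[X_0,X_1]_{1/2}$ with equivalent norms. The inclusion $X\hookrightarrow[X_0,X_1]_{1/2}$ with $\|x\|_{1/2}\le C\|x\|_X$ is immediate: $F_x$ is an admissible function with $F_x(\tfrac12)=x$ and $\|F_x\|_{\mathcal G}\le C\|x\|_X$ by construction of the $X_j$. The reverse inclusion $[X_0,X_1]_{1/2}\hookrightarrow X$ is the delicate one: given $g\in\mathcal G(X_0,X_1)$ with $g(\tfrac12)=x$, a three-lines/maximum-principle argument played against the definitions of the $X_j$ recovers $\|x\|_X\lesssim\|g\|_{\mathcal G}$, using once more the tameness of $\omega_f$ so that the two edge weights combine at $\theta=\tfrac12$ back into the unweighted norm of $X$. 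Finally, to identify the derivation $\Omega$ of $[X_0,X_1]_{1/2}$: for $\sigma$-simple $f$, $F_f$ is an admissible function computing $\Omega(f)$, and $F_f'(\tfrac12)=\Phi(f)$ exactly; since by Lemma~\ref{mechanism} and the remarks following it any two admissible functions agreeing at $\tfrac12$ have derivatives differing by at most a multiple of $\|f\|_{X_{1/2}}\approx\|f\|_X$, we get $\|\Omega(f)-\Phi(f)\|_X\le K\|f\|_X$ on $\sigma$-simple $f$, and hence, by density and the quasi-linearity estimates, on all of $X$. Thus $\Phi\approx\Omega$.

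For the last assertion, if $\Phi$ is symmetric then $\omega_{u^\circ f}$ and $u^\circ\omega_f$ differ boundedly in $X$ for every measure-preserving automorphism $u$ of $\R^+$, so the infimum defining $\|\cdot\|_{X_j}$ may be taken also over rearrangements of the data without changing $X_j$ up to equivalence; the resulting couple consists of symmetric spaces and still satisfies $X=[X_0,X_1]_{1/2}$ and $\Omega\approx\Phi$. As indicated, I expect the real work to be concentrated in the non-degeneracy of $X_0,X_1$ and in the two-sided norm equivalence $X=[X_0,X_1]_{1/2}$; both rest on Kalton's fine analysis of centralizers on $p$-convex, $q$-concave K\"othe spaces, and it is precisely there that the hypotheses $1<p\le2$, $p^{-1}+q^{-1}=1$ and $C[\Phi]<200/q$ are used.
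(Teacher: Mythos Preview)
The paper does not prove this statement: it is quoted verbatim as Theorem~7.6 of Kalton's paper \cite{k-tams} and used as a black box (see the sentence ``For what this paper is concerned, the crucial result on $L^\infty$-centralizers is the following'' immediately preceding the statement). There is therefore no proof in the present paper to compare your proposal against.

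That said, your sketch is in the right spirit and close to Kalton's actual strategy: one does build the couple $(X_0,X_1)$ out of the centralizer by reading off the ``indicator'' $\omega_f=\Phi(f)/f$ and using it to produce weights, and the reality of $\Phi$ is needed precisely so that $\omega_f$ is real-valued and exponentiable. Two points where your outline diverges from (and would need substantial work to reach) Kalton's argument: first, Kalton does not define $X_j$ by the infimum you wrote, but rather constructs an explicit lattice norm from the indicator function and then checks the K\"othe axioms directly; your infimum definition would require a separate argument that it yields a norm at all (subadditivity is clear, but finiteness and nondegeneracy are not, as you note). Second, the reverse inclusion $[X_0,X_1]_{1/2}\hookrightarrow X$ is not obtained by a three-lines argument against the definition of $X_j$; Kalton instead proves a pointwise domination theorem for the indicator (this is where the $p$-convexity/$q$-concavity hypothesis and the constant $200/q$ genuinely enter) and uses it to control arbitrary admissible functions. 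Your proposal correctly identifies where the difficulty lies but does not supply the mechanism that resolves it; for that one must go to \cite{k-tams}.
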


Before going any further, let us see some useful consequences.

\begin{lemma}\label{anyfurther} Let $p\in(1,\infty)$.
\begin{itemize}
\item[(a)] Every centralizer on $L^p$ is equivalent to a linear combination of two derivations.
\item[(b)] Every symmetric centralizer on $L^p$ is equivalent to a lazy centralizer.
\item[(c)] Every symmetric centralizer on $L^p$ takes values in $L^1+L^\infty$.
\end{itemize}
\end{lemma}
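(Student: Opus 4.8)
The plan is to derive all three assertions from Kalton's structure theorem (Theorem~\ref{crucial}) after two routine normalizations. First I would reduce to the range $1<p\le 2$: for $2\le p<\infty$ one replaces $L^p$ by its K\"othe dual $L^{p'}$, a centralizer by its adjoint centralizer, and an interpolation couple by the dual couple; since this operation carries derivations to derivations and preserves realness, symmetry and laziness, the general case follows from the case $p\le 2$. Second, given an arbitrary centralizer $\Phi:L^p\to L^0$, I would split it into ``real'' and ``imaginary'' parts
$$
\Phi_1(f)=\tfrac12\bigl(\Phi(f)+\overline{\Phi(\bar f)}\bigr),\qquad \Phi_2(f)=\tfrac1{2i}\bigl(\Phi(f)-\overline{\Phi(\bar f)}\bigr),
$$
so that $\Phi=\Phi_1+i\Phi_2$. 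A direct check shows that $f\mapsto\overline{\Phi(\bar f)}$ is again a $\mathbb C$-homogeneous centralizer with the same constants $Q$ and $C$, and that it is symmetric whenever $\Phi$ is (the map $f\mapsto\bar f$ commutes with every $u^\circ$); hence $\Phi_1$ and $\Phi_2$, which on real functions reduce to the real and imaginary parts of $\Phi$, are \emph{real} centralizers, symmetric when $\Phi$ is. Therefore it suffices to prove each statement for a real centralizer, the factor $i$ and the ensuing sum being harmless in (a) (they are swallowed by the phrase ``linear combination''), in (b) (sums and scalar multiples of lazy maps are lazy) and in (c) (since $L^1+L^\infty$ is a linear space).

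So let $\Phi$ be a \emph{real} centralizer and pick an integer $n>qC[\Phi]/200$, so that $\frac1n\Phi$ is a real centralizer with $C[\frac1n\Phi]<200/q$. Theorem~\ref{crucial} then furnishes a couple $(X_0,X_1)$ of K\"othe function spaces with $L^p=[X_0,X_1]_{1/2}$ (up to renorming) whose corresponding derivation $\Omega$ satisfies $\|\Phi(f)-n\Omega(f)\|_p\le nK\|f\|_p$ for all $f$; moreover $X_0,X_1$ may be taken symmetric if $\Phi$ is. Assertion (a) is now immediate: recombining the two pieces, the original centralizer is equivalent to $n_1\Omega_1+in_2\Omega_2$, a linear combination of two derivations. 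For (c), recall that $\Omega$ takes values in $\Sigma=X_0+X_1$ and that a symmetric space satisfies $X_j\subseteq L^1+L^\infty$, so $\Sigma\subseteq L^1+L^\infty$; since $\Phi(f)-n\Omega(f)\in L^p\subseteq L^1+L^\infty$, every value of $\Phi$ lies in $L^1+L^\infty$.

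The real content of (b) is the claim that a derivation $\Omega$ arising from a couple of \emph{symmetric} spaces $(X_0,X_1)$ is equivalent to a lazy centralizer. To prove it I would fix $f\in L^p$, put $\mathscr A=\mathscr A(f)$, and use that the conditional expectation $\mathbb E=\mathbb E^{\mathscr A}$ is a contraction of $L^1$ and of $L^\infty$, hence a norm-one projection of every symmetric space, in particular of $X_0$ and of $X_1$, with range the $\mathscr A$-measurable elements. Because complex interpolation commutes with a common norm-one projection, $\mathcal G(\mathbb E X_0,\mathbb E X_1)=\mathbb E\,\mathcal G(X_0,X_1)$ and $[\mathbb E X_0,\mathbb E X_1]_{1/2}=\mathbb E\,L^p$ is precisely the space of $\mathscr A$-measurable functions of $L^p$. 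Since $f$ is $\mathscr A$-measurable, one may choose the almost-extremal function $g_f$ representing $\Omega(f)$ inside $\mathcal G(\mathbb E X_0,\mathbb E X_1)$; such $g_f$ takes values in the closed subspace $\mathbb E X_0+\mathbb E X_1\subseteq\Sigma$, all of whose elements are $\mathscr A$-measurable, and therefore $\Omega(f)=g_f'(1/2)$ is $\mathscr A$-measurable. Carrying this out homogeneously for every $f$ produces a representative $\tilde\Omega$ of $\Omega$ with $\tilde\Omega(f)$ $\mathscr A(f)$-measurable for all $f$, that is, a lazy centralizer; and $\tilde\Omega\approx\Omega$ since any two representatives of a derivation are equivalent. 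Recombining, a symmetric $\Phi$ is equivalent to $n_1\tilde\Omega_1+in_2\tilde\Omega_2$, a lazy centralizer, which proves (b).

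The step I expect to be delicate is this last claim, namely arranging that the extremal analytic functions attached to $\mathscr A$-measurable data are themselves $\mathscr A$-measurable; everything hinges on the compatibility of conditional expectations with the complex interpolation method. The reduction to $p\le 2$ by duality is conceptually routine, but one should take care that ``derivation'', ``symmetric'' and ``lazy'' genuinely survive the passage to the adjoint centralizer and the dual couple.
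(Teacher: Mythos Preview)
Your overall strategy is sound and close to the paper's, but two points deserve comment.

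First, the duality reduction to $1<p\le 2$ is unnecessary and rests on a misreading of Theorem~\ref{crucial}: the parameter called $p$ there is the convexity exponent of the K\"othe space $X$, not the exponent of the Lebesgue space under discussion. For $X=L^r$ with $r\in(1,\infty)$ one simply takes the convexity parameter to be $\min(r,r')$ and the concavity parameter to be $\max(r,r')$, so Theorem~\ref{crucial} applies directly to every $L^r$ without passing to duals. This is exactly how the paper proceeds. Your duality detour would require a careful definition of the ``adjoint centralizer'' and a verification that it preserves realness, symmetry, laziness and the property of being a derivation---work you correctly flag as delicate, and which is better simply avoided.

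Second, your route to (b) genuinely differs from the paper's. You build a lazy representative of the derivation $\Omega$ by choosing, for each $f$, an $\mathscr A(f)$-measurable near-extremal (obtained by post-composing an arbitrary near-extremal with $E^{\mathscr A(f)}$) and then differentiating. This is correct and more direct than the paper's argument, which instead establishes the uniform estimate
\[
\|E^{\mathscr A}\Phi f-\Phi(E^{\mathscr A}f)\|_{p}\le C\|f\|_{p}
\]
for \emph{every} sub-$\sigma$-algebra $\mathscr A$, and only then defines the lazy version as $f\mapsto E^{\mathscr A(f)}(\Phi f)$. The paper's intermediate inequality is of independent interest (symmetric centralizers almost commute with all conditional expectations), while your approach reaches the stated conclusion with less machinery. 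Both hinge on the same basic fact that conditional expectation is a contraction on every symmetric K\"othe space.

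Your decomposition $\Phi=\Phi_1+i\Phi_2$ via $f\mapsto\overline{\Phi(\bar f)}$ is a clean alternative to the paper's formula $\Phi_1(f)=\Re\Phi(\Re f)+i\Re\Phi(\Im f)$; both produce real (symmetric) centralizers and both work.
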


\begin{proof}
(a) It is obvious from Theorem~\ref{crucial} that if $\Phi$ is a real centralizer on $L^p$ then $c\Phi$ is equivalent to a derivation for $c>0$ sufficiently small, hence $\Phi$ is equivalent to a constant multiple of a derivation.
If $\Phi$ is any (symmetric) centralizer on $L^p$, then
letting $\Phi_1(f)=\Re\Phi(\Re(f))+i\Re\Phi(\Im(f))$ and $\Phi_2(f)=\Im\Phi(\Re(f))-i\Im\Phi(\Im(f))$ one has
$\Phi\approx\Phi_1+i\Phi_2$ with $\Phi_1$ and $\Phi_2$ real (symmetric) centralizers and the result follows.

(b)
Let $\Phi$ be a symmetric centralizer on $L^p$, where $1<p<\infty$. We shall prove that $\Phi$ ``almost  commutes'' with every conditional expectation operator in the following sense: there is a constant $C$ such that for every $\sigma$-algebra $\mathscr A\subset \mathscr B$ and every $f\in L^p$, one has
\begin{equation}\label{EA}
\|E^\mathscr A\Phi f- \Phi(E^\mathscr A(f))\|_{L^p}\leq C\|f\|_{L^p},
\end{equation}
where $E^\mathscr A$ is the conditional expectation operator; see \cite[p.~122]{LindTzaf} for the definition.
After that the result follows just considering the mapping $f\mapsto E^{\mathscr A(f)}(\Phi f)$ which gives a (necessarily symmetric) lazy centralizer equivalent to $\Phi$.
By (a) we may assume that $\Phi$ is a derivation, so that there are a couple of symmetric spaces $X_0, X_1$ so that $[X_0, X_1]_{1/2}=L^p$ with equivalent norms and $\Phi(f)=G_f'(\tfrac{1}{2})$, where $G_f\in\mathcal G(X_0,X_1)$ is such that $G_f(\tfrac{1}{2})=f$ and   $\|G_f\|_{\mathcal G(X_0,X_1)}\leq M\|f\|_{L^p}$ for some constant $M$ independent on $f$. Since $L^\infty+L^1$ contains both $X_0$ and $X_1$, it also contains its sum, so $\Phi(f)\in L^\infty+L^1$  and $E^\mathscr A\Phi f$ is correctly defined.

On the other hand, if $\mathscr A\subset\mathscr B$ is a $\sigma$-algebra, $E^\mathscr A$ is a contractive projection on every symmetric space (see \cite[Theorem 2.a.4]{LindTzaf}), hence if  $g\in\mathcal G(X_0,X_1)$, then $E^\mathscr A\circ g$ also belongs to  $\mathcal G(X_0,X_1)$ and $\| E^\mathscr A\circ g\|\leq \|g\|$.

Now, if $f\in L^p$ and $\mathscr A\subset\mathscr B$ is a $\sigma$-algebra, letting $h=E^\mathscr A(f)$
we consider the functions $G_f$ and $G_h$. Then $E^\mathscr A\circ G_f-G_h$ vanishes at $z=\tfrac{1}{2}$, so
$$
\|E^\mathscr A\Phi f- \Phi(E^\mathscr A(f))\|_{L^p}= \|\delta_{1/2}'\left( E^\mathscr A\circ G_f-G_h \right)\|\leq M \|\delta_{1/2}'\|\|f\|_{L^p}.
$$
This proves (b) and (c) for derivations and the general case follows from (a).
\end{proof}

\subsection{From commutative to noncommutative}
This Section depends on the Spectral Theorem that we now recall, mainly to fix notations. The reader is referred to \cite[Section~VIII.3]{reed-simon} for a complete exposition. Let $\mathcal H$ be a Hilbert space. A closed and densely defined operator $x:D(x)\to\mathcal H$ is self-adjoint when $D(x)=D(x^*)$ and $x^*=x$.

For every self-adjoint $x$ there exists a unique ``spectral measure'' $e^x:\mathscr B(\mathbb R)\to B(\mathcal H)$ (this means that $e^x(B)$ is an orthogonal projection for each Borel $B$ and that $e^x(\cdot)$ is $\sigma$-additive with respect to the strong operator topology of $B(\mathcal H)$ such that
$$
x=\int_\mathbb R \lambda de^x(\lambda).
$$
If $x$ is a closed, densely defined operator, then $x^*x$ is self-adjoint (and, actually, positive). The modulus of $x$ is then defined as
$$
|x|=(x^*x)^{1/2}=\int_{\mathbb R^+} \lambda^{1/2} de^{(x^*x)}(\lambda).
$$
One has the ``polar decomposition'' $x=u|x|$, where $u$ is a partial isometry which is ofted called the phase of $x$.

Let $\M$ be a semifinite von Neumann algebra with a faithful, normal, semifinite (fns) trace
$\tau$, acting on $\mathcal H$. A closed
densely defined operator on $\mathcal H$ is affiliated with $\M$ if its
spectral projections (that is, the projections $e^{(x^*x)}(B)$ for $B\in\mathscr B(\mathbb R)$) belong to $\M$. A closed, densely defined
operator $x$ affiliated with $\M$ is called
$\tau$-measurable if, for any $\epsilon>0$, there exists a
projection $e\in \M$ such that $e\mathcal H\subset D(x)$ and
$\tau(1-e)\leq\epsilon$. We denote the set of all
$\tau$-measurable operators affiliated with a von Neumann algebra
$\M$ by $\widetilde{\M}$. The so called
measure topology on $\widetilde{\M}$ is the least linear topology containing the sets
$$\{x\in\widetilde{\M}:
\text{there exists a projection } e\in \M \text{ such that
}\tau(1-e)<\varepsilon, xe\in \M \text{ and }\|xe\|<\varepsilon\},$$
with $\varepsilon>0$.
Endowed with measure topology, strong sum, strong product and
adjoint operation as involution, $\widetilde{\M}$ becomes a
topological *-algebra (see \cite{nelson, conradie} for basic information). The trace $\tau$ has a natural extension to
$\widetilde{\M}_+$.

We define $L^p(\M,\tau)$ as the space of all $\tau$-measurable
operators $x$ such that $\tau(|x|^p)<\infty$,
with norm $\|x\|_p=\left(\tau(|x|^p)\right)^{1/p}$.

More general spaces of operators can be introduced as follows \cite{dodds, deP,kalton-sukochev}.
Let $x$ be a measurable operator, so that $\tau \left(e^{|x|}(\lambda,\infty)\right)$ is finite for some $\lambda >0$. The generalized singular value function of $x$ is the function $\mu(x):\R^+\to[0,\infty]$ given by
$$
\mu(x)(t)=\inf\{\lambda>0:\tau \left(e^{|x|}(\lambda,\infty)\right)\leq t\}.
$$
Now, if $X$ is a symmetric function space, the corresponding ``symmetric operator space'' is
$$
X(\M,\tau)=\{x\in \widetilde\M: \mu(x)\in X\}, \quad\text{with}\quad \|x\|=\|\mu(x)\|_X.
$$
An important feature of these spaces is that they are bimodules over $\M$ with the obvious outer multiplications.

In order to state the main result of the Section, let us consider a self-adjoint $y\in \widetilde\M$ and let $\M_y$ be the (von Neumann) subalgebra of $\M$ generated by the spectral projections of $y$. By general representation results one can construct a *-homomorphism $\xi:\M_y\to L^\infty$ preserving the trace, that is, such that $\tau(a)=\int_0^\infty\xi(a)d\lambda$ for every nonnegative $a\in \M_y$.
A simple proof of this fact appears in \cite[Proof of Theorem 2.1]{pisier-handbook} (note that ``our'' $\xi$ is the inverse of the map that Pisier and Xu call $S$). A different proof for finite (von Neumann) algebras can be seen in \cite[Theorem 3.2.5]{sinclair} (the argument works for semifinite algebras as well). For a more general result, see \cite[Theorem 3.5]{dodds}.

If $\widetilde\M_y$ denotes the closure of $\M_y$ in $\widetilde\M$, then $\xi$ extends to a continuous *-homomorphism $\widetilde \M_y\to L^0$ that we denote again by $\xi$. Clearly, $\xi(\M_y)=L^\infty(\mathbb R^+,\mathscr A,\lambda)$, where $\mathscr A$ is a $\sigma$-subalgebra of $\mathscr B$. It follows that for every $\mathscr A$-measurable $f\in L^1$ there is $z\in
\widetilde\M_y$ (actually in $L^1(\M,\tau)$) such that $f=\xi(z)$ and so $\xi^{-1}(f)$ is correctly defined if $f\in L^1+L^\infty$ is $\mathscr A$-measurable. Besides, $\xi$ preserves every ``symmetric'' norm in the following sense: if $X$ is a symmetric function space on $\mathbb R^+$ and $f\in X$ is $\mathscr A$-measurable, then there is $x\in \widetilde \M_y$ such that $\xi(x)=f$ and $\|x\|_{X(\mathcal M,\tau)}=\|f\|_X$. This is obvious since $\mu(x)$ and $f$ have the same distribution.\medskip

The following result and its proof are modeled on \cite[Theorem 8.3]{k-tams}:

\begin{theorem}\label{main}

Let $\Phi$ be a lazy, symmetric $L^\infty$-centralizer on $L^p$, where $1<p<\infty$. Given a semifinite von Neumann algebra $(\M,\tau)$
we define a mapping $\Phi_\tau: L^p(\M, \tau)\to\widetilde\M$ as follows: For each $x\in L^p(\M,\tau)$  we choose a trace preserving *-homomorphism $\xi:  \widetilde \M_{|x|}\to L^0$ (depending only on $\M_{|x|}$)  as before and we set
\begin{equation}\label{Pt}
\Phi_\tau(x)=u\cdot \xi^{-1}(\Phi(\xi(|x|))),
\end{equation}
where  $x=u|x|$ is the polar decomposition. $\Phi_\tau$ is an $\M$-bicentralizer on $L^p(\M, \tau)$ and all mappings defined in this way are equivalent, independently of the choice of
$\xi$.\end{theorem}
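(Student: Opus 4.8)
The plan is to reduce to the case of a \emph{derivation} and then to recognise $\Phi_\tau$ as the derivation attached, via the machinery of Section~\ref{cits}, to the interpolation couple obtained by replacing the symmetric function spaces produced by Theorem~\ref{crucial} by the corresponding symmetric operator spaces over $(\M,\tau)$. First I would check that (\ref{Pt}) makes sense and is $\mathbb C$-homogeneous: by Lemma~\ref{anyfurther}(c) a symmetric centralizer takes values in $L^1+L^\infty$, while laziness forces $\Phi(\xi(|x|))$ to be measurable with respect to the $\sigma$-algebra generated by $\xi(|x|)$, which is exactly the range $\mathscr A$ of the spectral projections of $|x|$ under $\xi$; hence $\xi^{-1}(\Phi(\xi(|x|)))$ is a well-defined element of $\widetilde\M_{|x|}$ and $\Phi_\tau(x)\in\widetilde\M$. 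Homogeneity is immediate: writing $\lambda=e^{i\theta}|\lambda|$ one has $|\lambda x|=|\lambda|\,|x|$, so $\M_{|\lambda x|}=\M_{|x|}$ and the same $\xi$ serves; then $\xi(|\lambda x|)=|\lambda|\,\xi(|x|)$, the phase of $\lambda x$ is $e^{i\theta}u$, and the $\mathbb C$-homogeneity of $\Phi$ together with linearity of $\xi^{-1}$ give $\Phi_\tau(\lambda x)=\lambda\Phi_\tau(x)$.

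Next I would record two formal properties of the assignment $\Phi\mapsto\Phi_\tau$ for lazy symmetric centralizers, once a choice of the maps $\xi$ has been fixed. From (\ref{Pt}) one reads off at once $(\Phi+i\Psi)_\tau=\Phi_\tau+i\Psi_\tau$ and $(c\Phi)_\tau=c\Phi_\tau$. And if $\Phi\approx\Psi$ (both lazy symmetric), then $\Phi_\tau\approx\Psi_\tau$: indeed $\Phi_\tau(x)-\Psi_\tau(x)=u\,\xi^{-1}\big(\Phi(\xi|x|)-\Psi(\xi|x|)\big)$, the function $\Phi(\xi|x|)-\Psi(\xi|x|)$ is $\mathscr A$-measurable (laziness of both maps) with $L^p$-norm at most $K\|\xi|x|\|_p=K\|x\|_p$, and both $\xi^{-1}$ (on $\mathscr A$-measurable functions) and left multiplication by the contraction $u$ do not increase the $L^p(\M,\tau)$-norm. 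Now, by the real/imaginary splitting in the proof of Lemma~\ref{anyfurther}(a) one writes $\Phi\approx\Phi_1+i\Phi_2$ with $\Phi_1,\Phi_2$ real, symmetric and lazy (these properties are readily checked for the maps $f\mapsto\Re\Phi(\Re f)+i\Re\Phi(\Im f)$, etc.); applying Theorem~\ref{crucial} to $c\Phi_j$ for small $c>0$ and then passing, as in the proof of Lemma~\ref{anyfurther}(b), to $f\mapsto E^{\mathscr A(f)}(c^{-1}\Omega_j f)$ — which is still a derivation, now realised by the $\mathscr A(f)$-measurable functions $E^{\mathscr A(f)}g^{(j)}_f$, and is lazy and symmetric — one obtains $\Phi_j\approx\tilde\Omega_j$, where $\tilde\Omega_j$ is a lazy symmetric derivation attached to a couple of symmetric function spaces $(X^j_0,X^j_1)$ with $[X^j_0,X^j_1]_{1/2}=L^p$ whose realising analytic functions may be taken $\mathscr A(f)$-measurable. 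By the two formal properties it then suffices to treat a single such $\tilde\Omega$.

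So let $\tilde\Omega$ be a lazy symmetric derivation attached to a symmetric couple $(X_0,X_1)$, with $\mathscr A(f)$-measurable realising functions $g_f\in\mathcal G(X_0,X_1)$ satisfying $g_f(\tfrac12)=f$, $\|g_f\|\le M\|f\|_p$, $g_f'(\tfrac12)=\tilde\Omega(f)$. Given $x=u|x|\in L^p(\M,\tau)$ and the chosen $\xi$, put $f=\xi(|x|)$ and define the lift $G_x(z)=u\cdot\xi^{-1}(g_f(z))$ for $z\in\mathbb S$. Since $\xi$ is isometric for symmetric norms on $\mathscr A$-measurable functions and left multiplication by $u$ is contractive on every operator space $X_j(\M,\tau)$, one verifies that $G_x\in\mathcal G\big(X_0(\M,\tau),X_1(\M,\tau)\big)$ with $\|G_x\|\le M\|x\|_p$; moreover $G_x(\tfrac12)=u|x|=x$, and, as $\xi^{-1}$ and left multiplication by $u$ are bounded and linear (hence commute with complex differentiation), $G_x'(\tfrac12)=u\,\xi^{-1}(\tilde\Omega(\xi|x|))=\tilde\Omega_\tau(x)$. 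By the interpolation theorem for noncommutative symmetric spaces (see \cite{dodds, kalton-sukochev}) one has $[X_0(\M,\tau),X_1(\M,\tau)]_{1/2}=L^p(\M,\tau)$ with equivalent norms, and these operator spaces are $\M$-bimodules, so the canonical derivation $\Omega^{\mathrm{can}}$ of the couple $(X_0(\M,\tau),X_1(\M,\tau))$ is, by the bimodule version of the observation closing Section~\ref{cits}, an $\M$-bicentralizer on $L^p(\M,\tau)$. Since $\tilde\Omega_\tau(x)=G_x'(\tfrac12)$ with $G_x\in\mathcal G$, $G_x(\tfrac12)=x$ and $\|G_x\|\le M\|x\|_p$, the uniqueness argument following Lemma~\ref{justdefined} gives $\tilde\Omega_\tau\approx\Omega^{\mathrm{can}}$, and in particular $\tilde\Omega_\tau$ is an $\M$-bicentralizer, independently of $\xi$.

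Assembling the pieces: for \emph{any} choice of the maps $\xi$ one gets $\Phi_\tau\approx(\Phi_1)_\tau+i(\Phi_2)_\tau\approx(\tilde\Omega_1)_\tau+i(\tilde\Omega_2)_\tau\approx\Omega^{\mathrm{can}}_1+i\,\Omega^{\mathrm{can}}_2$, a fixed linear combination of $\M$-bicentralizers on $L^p(\M,\tau)$ that does not depend on the choices made; hence $\Phi_\tau$ is an $\M$-bicentralizer and all the maps (\ref{Pt}) are mutually equivalent. I expect the genuine obstacle to be quasi-linearity together with the bimodule estimate: the moduli $|x|$, $|y|$, $|x+y|$ generate different, generally non-commuting, abelian subalgebras of $\M$, so there is no direct way to transfer the corresponding estimates for $\Phi$ on $L^p(\R^+)$ to $\Phi_\tau$. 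This is precisely what the detour through interpolation circumvents, at the price of invoking the nontrivial identification $[X_0(\M,\tau),X_1(\M,\tau)]_{1/2}=L^p(\M,\tau)$ and of checking carefully that the lifted function $G_x$ really belongs to $\mathcal G(X_0(\M,\tau),X_1(\M,\tau))$ with the correct norm.
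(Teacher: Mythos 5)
Your proposal is correct and follows essentially the same route as the paper: well-definedness and homogeneity of (\ref{Pt}), reduction via Lemma~\ref{anyfurther} to a lazy symmetric derivation, the identity $[X_0(\M,\tau),X_1(\M,\tau)]_{1/2}=L^p(\M,\tau)$, and the lift $G_x(z)=u\,\xi^{-1}(\cdots)$ exhibiting $\Phi_\tau$ as (equivalent to) the canonical derivation of the operator couple, hence an $\M$-bicentralizer independent of $\xi$. The only cosmetic difference is that you make the realising functions $\mathscr A(f)$-measurable before lifting, whereas the paper inserts the conditional expectation inside the lift, $G(z)=u\,\xi^{-1}E^{\mathscr A}(g(z))$.
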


\begin{proof}
First of all observe that the definition of $\Phi_\tau$ makes sense since $\Phi(\xi(|x|))$ belongs to $L^\infty+L^1$ (see Lemma~\ref{anyfurther}) and it is measurable with respect to the $\sigma$-algebra generated by
$\xi(|x|)$ and so $\xi^{-1}\Phi(\xi(|x|))$ is well defined. Also, note that since $\M_{|\lambda x|}=\M_{|x|}$ for each nonzero $\lambda\in\C$ and $\xi$ depends only on its domain algebra the  resulting map $\Phi_\tau$ is homogeneous.

Let us prove that $\Phi_\tau$ is a bicentralizer assuming that $\Phi$ is a derivation. Precisely, we are assuming there is a couple of symmetric K\"othe spaces on $\R^+$ such that $L^p=[X_0,X_1]_{1/2}$, with equivalent norms in such a way that, for each $f\in L^p$ one has $\Phi(f)=g'({1\over 2})$, where $g\in \mathcal G(X_0,X_1)$ satisfies $g({1\over 2})=f$ and $\|g\|_\mathcal G\leq K\|f\|_p$.

Set $X=[X_0,X_1]_{1/2}$, with the natural norm. This is a symmetric  K\"othe space on $\R^+$. The key point is that the formula
\begin{equation}\label{taking}
[X_0(\M, \tau),X_1(\M,\tau)]_{1\over 2}=X(\M,\tau)
\end{equation}
holds for all semifinite algebras $(\M,\tau)$ -- see \cite[Theorem 3.2]{dodds} and \cite{pisier-handbook}.

Of course $X(\M,\tau)=\lt$, up to equivalence of norms and we may consider the corresponding derivation on $\lt$.
That is, given $x\in\lt$ we choose $G_x\in\cl G(X_0(\M,\tau),X_1(\M,\tau))$ such that $G_x({1\over 2})=x$ and $\|G_x\|_\mathcal G\leq (1+\epsilon)\|x\|_{X(\M,\tau)}\leq K\|x\|_p$ and then we put
$$
\Omega(x)=\delta_{1/2}'G_x\in \widetilde\M.
$$
The fact that such an $\Omega$ turns out to be an $\M$-bicentralizer on $\lt$  should be obvious by now, but let us record the proof for future reference. Take $x\in X(\M,\tau)$ and $a,b\in \M$ (that we regard as constant functions on $\mathbb S$). We have $G_{axb}-aG_xb\in\ker\delta_{1/2}$ by the very definition. Moreover,
$$
\|G_{axb}-aG_xb\|_\mathcal G\leq \|G_{axb}\|_\mathcal G+\|aG_xb\|_\mathcal G\leq 2(1+\epsilon)\|a\|\|x\|_{X(\M,\tau)}\|b\|,
$$\
so
$$
\begin{aligned}
\|\Omega(axb)-a\Omega(x)b\|_{X(\M,\tau)}&= \|\delta_{1/2}'(G_{axb}-aG_xb)\|_{X(\M,\tau)}\\
&\leq
\|\delta_{1/2}':\ker \delta_{1/2}\to X(\M,\tau)\|2(1+\epsilon)\|a\|\|x\|_{X(\M, \tau)}\|b\|\\
&\leq (1+\epsilon)\pi \|a\|\|x\|_{X(\M,\tau)}\|b\|.\\
\end{aligned}
$$
Thus, to complete the proof that the formula (\ref{Pt}) defines a bicentralizer on $\lt$ , it suffices to see that one can choose the functions $G_x$ in such a way that $G_x'({1\over 2})=\Phi_\tau$.

So, pick a normalized $x\in \lt$ and put $f=\xi(|x|)$. Then $f$ is normalized in $L^p$ and we have $\Phi(f)=\delta_{1/2}'g$ where $g\in \mathcal G(X_0,X_1)$ is the corresponding extremal--recall that we are assuming that $\Phi$ is itself a derivation.

We claim that the mapping $G:\mathbb S\to \widetilde\M$ given by
$
G(z)=u\cdot\xi^{-1} E^\mathscr A (g(z))
$
is allowable for $x$. We have
$$
G(\tfrac{1}{2})=u\cdot\xi^{-1} E^\mathscr A (g(\tfrac{1}{2}))=u\cdot\xi^{-1} E^\mathscr A (f)=u\xi^{-1}f=u|x|=x.
$$
That $G$ belongs to $\cl G(X_0(\M,\tau),X_1(\M,\tau))$ is obvious since $E^\mathscr A$ is contractive on $X_0$ and $X_1$ (hence on $X_0+X_1$) and $\xi$ preserves all symmetric norms: actually the norm of $G$ in $\cl G(X_0(\M,\tau),X_1(\M,\tau))$ cannot exceed that of $g$ in $\mathcal G(X_0,X_1)$.

Finally, applying  the chain rule and taking into account that $\Phi$ is lazy,
$$
G'(\tfrac{1}{2})=u\cdot\xi^{-1} E^\mathscr A (g'(\tfrac{1}{2}))=u\cdot\xi^{-1} E^\mathscr A (\Phi(f))=u\xi^{-1}\Phi(f)=u\xi^{-1}\Phi(\xi(|x|))=\Phi_\tau(x).
$$
And so $\Phi_\tau$ is a bicentralizer.

To complete the proof (still under the assumption that $\Phi$ is a derivation) we must prove that $\Phi_\tau$ is essentially independent of the family of *-homomorphisms $\xi$. Indeed, if $\xi_1:\widetilde\M_{|x|}\to L^0$ is another trace-preserving *-homomorphism and $\mathscr A_1\subset \mathscr B$ is the corresponding
$\sigma$-algebra, letting $f_1=\xi_1(|x|)$ and taking any allowable $g_1\in \mathcal G(X_0,X_1)$ so that $g_1(\tfrac{1}{2})=f_1$ with $\|g_1\|\leq (1+\varepsilon)\|f_1\|_X= \|x\|_{X(\M,\tau)}$  we have that if
$$
G_1(z)=u\cdot\xi_1^{-1} E^{\mathscr A_1} (g_1(z))\quad\quad(z\in\mathbb S),
$$
then $G_1$
belongs to $\cl G(X_0(\M,\tau),X_1(\M,\tau))$, is allowable for $x$ and $G_1'(\tfrac{1}{2})=u\cdot\xi_1^{-1}\Phi(\xi_1(|x|))$ and since $\delta'_{1/2}$ is bounded from $\ker\delta_{1/2}$ to $X(\M,\tau)$ (see Lemma~\ref{mechanism}) we have that  $G_1'(\tfrac{1}{2})-G'(\tfrac{1}{2})$ falls in $X(\M,\tau)$ and
$$
\|u\xi_1^{-1}\Phi(\xi_1(|x|))- u\xi^{-1}\Phi(\xi(|x|))\|_{X(\M,\tau)}= \|\delta'_{1/2}(G_1-G)\|\leq M\|x\|_{{X(\M,\tau)}}.
$$
This completes the proof when $\Phi$ is a derivation--or a linear combination of derivations.


To finish, observe that if $\Psi$ and $\Phi$ are two equivalent lazy centralizers on $L^p$, then the maps $\Psi_\tau$ and $\Phi_\tau$ are equivalent on $\lt$--at least if the ``prescribed'' family of *-homomorphisms $x\mapsto\xi$ is fixed. Indeed, if $x\in L^p(\M,\tau)$, then
$$
\|\Psi_\tau(x)-\Phi_\tau(x)\|_{L^p(\M,\tau)}=\left\|u\xi^{-1}\left(\Psi(\xi(|x|)-\Phi(\xi(|x|)\right)\right\|\leq \|\Psi-\Phi\|\|(\xi(|x|))\|\leq M\|x\|.
$$
Now, the result follows from Lemma~\ref{anyfurther}.
\end{proof}

The action of $\Phi_\tau$ on $\sigma$-elementary operators if quite simple. Here, a $\sigma$-elementary operator is one of the form $x=\sum_{k=1}^\infty\lambda_k e_k$, with $e_k$ disjoint projections and $\lambda_k\in\mathbb C$.  Indeed, for such an $x$ we have $|x|=\sum_{k=1}^\infty|\lambda_k| e_k$ and $ u=\sum_{k=1}^\infty u_k e_k$, where $u_k$ is the signum of $\lambda_k$. Hence, if $\xi:\widetilde \M_{|x|}\to L^0$ is  any trace-preserving *-homomorphism, then $f=\xi(|x|)= \sum_{k=1}^\infty|\lambda_k| 1_{A_k}$, where $(A_k)$ is a sequence of disjoint Borel sets of $\mathbb R^+$ and $\xi(e_k)=1_{A_k}$ for every $k\in\mathbb N$. Now, as $\Phi$ is lazy, we have $\Phi(f)= \sum_{k=1}^\infty s_k 1_{A_k}$
for some sequence $(s_k)$ and
$$
\Phi_\tau(x)= u\xi^{-1}\left( \sum_{k=1}^\infty s_k 1_{A_k}	\right)= \sum_{k=1}^\infty u_ks_ke_k.
$$

The following result applies to many centralizers appearing in nature. In particular, it applies to the centralizers given by (\ref{important}) when $\vp$ depends only on the first variable, by just taking $\phi(t)=t\vp(\log t)$ for $t\in\R^+$.

\begin{corollary}\label{innature}
Let $\Phi$ be a centralizer on $L^p$, where $1<p<\infty$. Suppose there is a Borel function
 $\phi:\R^+\to\C$ such that $\Phi(f)=\phi\circ f$ for every $f\geq 0$ normalized in $L^p$.
Then, for every semifinite von Neumann algebra $(\M,\tau)$, the map $x\mapsto \|x\|_p u \phi(|x|/\|x\|_p)$ is an $\mathcal M$-bicentralizer on $\lt$.
\end{corollary}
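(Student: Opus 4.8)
The plan is to deduce the corollary from Theorem~\ref{main}, which is designed precisely for this situation; the only preliminary issue is that the hypothesis fixes $\Phi$ only on the positive cone of $L^p$, while Theorem~\ref{main} asks for a lazy symmetric centralizer on all of $L^p$. I would remove this by passing to the equivalent centralizer $\Psi(f):=\operatorname{sgn}(f)\,\Phi(|f|)$, where $\operatorname{sgn}(f)=f/|f|$ off the zero set of $f$ and $\operatorname{sgn}(f)=0$ on it. Taking $a=\operatorname{sgn}(f)$ in Definition~\ref{def:centralizer}(b) — note $\|a\|_\infty\le1$ and $a\,|f|=f$ — gives $\|\Phi(f)-\Psi(f)\|_p=\|\Phi(a|f|)-a\Phi(|f|)\|_p\le C[\Phi]\,\|f\|_p$, so $\Phi\approx\Psi$ and $\Psi$ is again an $L^\infty$-centralizer on $L^p$. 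By homogeneity $\Phi(|f|)=\|f\|_p\,\phi(|f|/\|f\|_p)$, so $\Psi(f)(t)=\operatorname{sgn}(f(t))\,\|f\|_p\,\phi(|f(t)|/\|f\|_p)$; this exhibits $\Psi(f)$ as a pointwise Borel function of $f$ (so $\Psi$ is lazy), shows $\Psi(u^\circ f)=u^\circ\Psi(f)$ for every measure-preserving automorphism $u$ of $\R^+$ (so $\Psi$ is symmetric), and makes plain that $\Psi$ agrees with $\Phi=\phi\circ(\cdot)$ on the nonnegative normalized functions (one checks along the way that $\phi(0)=0$, which condition (b) forces). One could in principle skip this step, because the recipe in Theorem~\ref{main} only ever evaluates the centralizer on positive functions, but the replacement makes the bookkeeping cleaner.

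Next I would feed $\Psi$ into Theorem~\ref{main}: for every semifinite $(\M,\tau)$ the map $\Psi_\tau(x)=u\,\xi^{-1}(\Psi(\xi(|x|)))$ is an $\M$-bicentralizer on $\lt$, independent of the choices. It remains to recognise $\Psi_\tau$. Fix a normalized $x=u|x|$ and a trace-preserving $*$-homomorphism $\xi\colon\widetilde\M_{|x|}\to L^0$ as in the construction, and set $f:=\xi(|x|)$. Then $f\ge0$, and since $\xi$ is trace preserving $\mu(|x|)$ and $f$ have the same distribution, so $\|f\|_p=\|x\|_p=1$ and hence $\Psi(f)=\phi\circ f$. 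As $\xi$ extends to a $*$-isomorphism of $\widetilde\M_{|x|}$ onto its image in $L^0$, it intertwines the Borel functional calculus, whence $\xi^{-1}(\phi\circ f)=\xi^{-1}(\phi(f))=\phi(\xi^{-1}f)=\phi(|x|)$, i.e.\ $\Psi_\tau(x)=u\,\phi(|x|)$. Finally $\Psi_\tau$ is homogeneous and, for $x\ne0$, $x$ and $x/\|x\|_p$ share the same partial isometry $u$ while $|x/\|x\|_p|=|x|/\|x\|_p$, so
\[
\Psi_\tau(x)=\|x\|_p\,\Psi_\tau\!\bigl(x/\|x\|_p\bigr)=\|x\|_p\,u\,\phi\!\bigl(|x|/\|x\|_p\bigr)
\]
for all $x$. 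This is exactly the map in the statement, so, being equal to $\Psi_\tau$, it is an $\M$-bicentralizer on $\lt$.

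I expect the delicate point to be the functional-calculus identity $\xi^{-1}(\phi\circ\xi(|x|))=\phi(|x|)$. Two things must be verified: first, that $\phi(|x|)$ is a bona fide $\tau$-measurable operator affiliated with $\M_{|x|}$ — this follows from $\Psi(f)=\phi\circ f\in L^1+L^\infty$ (Lemma~\ref{anyfurther}(c)) together with the fact that $\phi\circ f$ and $\mu(\phi(|x|))$ have the same distribution, which at the same time shows $\xi^{-1}$ is defined on $\phi\circ f$; and second, that the extension of $\xi$ to the measure-topology closures really is a $*$-isomorphism onto its range, so that Borel functional calculus passes through it. With these in hand the remaining checks are routine.
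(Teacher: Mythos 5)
Your proposal is correct and follows essentially the same route as the paper: reduce to Theorem~\ref{main} and then verify the functional-calculus identity $\xi(\phi(|x|))=\phi\circ\xi(|x|)$, which the paper proves via the intertwining of spectral measures $\xi e^{x}=e^{f}$ --- exactly the point you flag as the delicate one. Your preliminary replacement of $\Phi$ by $\Psi(f)=\operatorname{sgn}(f)\,\Phi(|f|)$ is a welcome extra precaution (the paper simply asserts that $\Phi$ is symmetric and lazy), but it does not change the substance of the argument.
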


\begin{proof} It is obvious that $\Phi$ is both symmetric and lazy. In view of Theorem~\ref{main} it suffices to check that $\Phi_\tau(x)=\phi(x)$ for $x$ positive and normalized in $\lt$. Let $\xi$ be the prescribed trace-preserving *-homomorphism.
Since $\Phi_\tau(x)=\xi^{-1}\Phi(\xi(x))=\xi^{-1}(\phi\circ(\xi(x))$ the proof will be complete if we show that $\phi\circ(\xi(x)=\xi(\phi(x))$, where $\phi(x)$ is given by the functional calculus:
$$
\phi(x)=\int_0^\infty\phi(\lambda)de^{x}(\lambda), \quad\text{where}\quad x=\int_0^\infty \lambda de^{x}(\lambda).
$$
Let us consider $L^\infty$ as a von Neumann algebra with trace $\lambda$ (to be true, the trace of $a\in L^\infty$ is $\int_{\mathbb R^+}ad\lambda$) acting by multiplication on $L^2$ and let $\widetilde L^\infty$ be the space of $\lambda$-measurable operators (affiliated with $L^\infty$). Set $f=\xi(x)$ which we may now regard also as a self-adjoint operator in $\widetilde L^\infty$. Then, if
$$
f=\int_0^\infty \lambda de^f(\lambda)
$$
is the spectral representation it is obvious that $\xi e^x= e^f$ in the sense that for every $B\in\mathscr B$ one has $\xi(B)= e^f(B)$. Moreover, $e^f(B)$ can be identified with $1_B\circ f= 1_{f^{-1}(B)}$ and so
$$
\xi(\phi(x))=\xi\left(	 \int_0^\infty\phi(\lambda)de^{x}(\lambda) \right)
= \int_0^\infty\phi(\lambda)d\xi e^{x}(\lambda) =
\int_0^\infty\phi(\lambda)de^{f}(\lambda) =\phi\circ f,
$$
and we are done.
\end{proof}

\subsection{Noncommutative Kalton-Peck spaces}\label{kalton-peck}
In this part we discuss the simplest case of self-extensions, namely that one obtains out from the identity $[\M, L^1(\M,\tau)]_\theta=L^p(\M,\tau)$ at $\theta=1/p$. In order to simplify the computation of extremals we introduce a larger space of holomorphic functions as follows.
We consider both $\M$ and $L^1(\M, \tau)$ as subspaces of $\widetilde\M$ and we set $\Delta=\M\cap L^1(\M, \tau)$ and
 $\Sigma=\M+ L^1(\M,\tau)$.
Let $\cl{H}=\cl{H}(\M,\tau)$ be the space of functions $h:\mathbb S\to\Sigma$
satisfying the following conditions:
\begin{enumerate}
\item $h$ is $\|\cdot\|_\Sigma$-bounded.

\item For each $x\in\Delta$ the function $z\longmapsto \tau(xh(z))$ is continuous on
$\mathbb S$ and analytic on $\mathbb S^\circ$.

\item $h(it)\in \M, h(it+1)\in L^1(\tau)$ for each $t\in\R$;

\item the map $t\mapsto h(it)$ is $\|\cdot\|_\infty$-bounded and
$\sigma(\M,L^1(\tau))$-continuous on $\R$;

\item the map $t\mapsto h(it+1)$ is $\|\cdot\|_1$-bounded and
$\|\cdot\|_1$-continuous on $\R$.
\end{enumerate}

We equip $\cl{H}$ with the norm $\|h\|_{\mathcal H}=\sup\{\|h(it)\|_\M, \|h(it+1)\|_1:t\in
\R\}\}$. Note that the elements of $\cl{H}$ are in fact
$\|\cdot\|_\Sigma$-analytic on $\mathbb S^\circ$.

Letting $\theta=1/p\in(0,1)$ we have that $\delta_\theta$ maps $\mathcal H$ onto $L^p(\tau)$ (without increasing the norm) and replacing $\mathcal G$ by $\mathcal H$ everywhere in the proof of Lemma~\ref{mechanism} we see that the restriction of $\delta_\theta'$ to $\ker\delta_\theta$ is a bounded operator onto $L^p(\tau)$ and we can form the push-out diagram
\begin{equation}\label{large}
\begin{CD}
\ker\delta_\theta @>>> \mathcal H @>\delta_\theta >>L^p(\tau)\\
@V\delta'_\theta VV @VVV @| \\
L^p(\tau) @>>> \PO @>>> L^p(\tau)
\end{CD}
\end{equation}
Please note that the above diagram lives in the category of bimodules over $\M$. Also, as $\mathcal H$ contains the Calder\'on space $\mathcal G$ it is really easy to see that this new push-out extension is in fact the same one gets by using $\mathcal G$.

Let us compute the extremals associated to the quotient $\delta_\theta:\mathcal H\to L^p(\tau)$. Suppose $f\in L^p(\tau)$ is a positive operator with $\|f\|_p=1$. It is easily seen that the function
$h(z)=f^{pz}$ belongs to $\mathcal H$ (although it is not in $\mathcal G$ in general) and also that $\|h\|_\mathcal H=1$.
Of course, $h'(\theta)=pf\log f$ and thus, the derivation associated to Diagram~\ref{large} is given by
\begin{equation}\label{op}
\Omega_p(f)=pf\log(|f|/\|f\|_p)\quad\quad(f\in L^p(\tau)).
\end{equation}
Let us denote the corresponding twisted sum $L^p(\tau)\oplus_{\Omega_p}L^p(\tau)$ by $Z_p(\tau)$. Our immediate aim is to prove the following.

\begin{theorem}\label{k-semi}
$Z_p(\M, \tau)$ is a nontrivial self extension of $L^p(\M,\tau)$ as long as $\M$ is infinite dimensional and $1<p < \infty$.
\end{theorem}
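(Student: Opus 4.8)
We must show that the self-extension $Z_p(\M,\tau)=L^p(\M,\tau)\oplus_{\Omega_p}L^p(\M,\tau)$ does not split, that is, that the centralizer $\Omega_p$ of (\ref{op}) is nontrivial. It suffices to do this at the level of Banach spaces --- if the underlying extension of Banach spaces is nontrivial, so is the extension of $\M$-bimodules --- so we have to rule out the existence of a linear map $h\colon L^p(\M,\tau)\to\Sigma$ with $\sup_{\|f\|_p\le1}\|\Omega_p(f)-h(f)\|_p<\infty$. The plan is to exhibit a copy of the classical Kalton--Peck space sitting inside $Z_p(\M,\tau)$, on which $\Omega_p$ is literally the Kalton--Peck centralizer.

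Since $\M$ is infinite-dimensional it contains an infinite sequence $(e_n)_{n\ge 1}$ of nonzero, pairwise orthogonal projections, and since $\tau$ is semifinite we may assume $0<a_n:=\tau(e_n)<\infty$. Let $\mathcal A$ be the abelian von Neumann subalgebra generated by the $e_n$ --- on which $\tau$ is still semifinite --- and put $W=L^p(\mathcal A,\tau)\subseteq L^p(\M,\tau)$, an infinite-dimensional $L^p$-space which, up to an isometry, is the weighted sequence space $\ell^p(a_n)$. The very formula (\ref{op}), being evaluated through the functional calculus of $|f|$, shows that for $w\in W$ the operator $\Omega_p(w)=pw\log(|w|/\|w\|_p)$ is affiliated with $\mathcal A$ and coincides with the derivation attached to the couple $(\mathcal A,L^1(\mathcal A,\tau))$ at $\theta=1/p$; in other words $\Omega_p|_W$ is exactly the Kalton--Peck centralizer of the (atomic) $L^p$-space $W$. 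Recall also that $\Omega_p$ takes values in $\Sigma=\M+L^1(\M,\tau)$.

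Suppose now that $Z_p(\M,\tau)$ splits, so that $\|\Omega_p(f)-h(f)\|_p\le K\|f\|_p$ for some linear $h\colon L^p(\M,\tau)\to\Sigma$. Let $E\colon L^p(\M,\tau)\to W$ be the conditional expectation associated with $\mathcal A$; it is contractive and, since $\Omega_p(w)$ is affiliated with $\mathcal A$, it fixes $\Omega_p(w)$ for every $w\in W$. Restricting the displayed inequality to $w\in W$ and applying $E$ yields $\|\Omega_p(w)-Eh(w)\|_W\le K\|w\|_W$ with $Eh|_W$ linear; thus $\Omega_p|_W$ would be a trivial centralizer on $W$. But this contradicts the classical theorem of Kalton and Peck that the Kalton--Peck self-extension of an infinite-dimensional $L^p$-space, $1<p<\infty$, is nontrivial (see \cite{k-memoir}). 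Hence $Z_p(\M,\tau)$ is nontrivial already as an extension of Banach spaces, and a fortiori as an extension of $\M$-bimodules.

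Two points call for a word of care. The first is that an infinite-dimensional von Neumann algebra does contain an infinite orthogonal family of nonzero finite-trace projections; this is standard but worth stating. The second, and the only genuine subtlety, is the interchange of $\Omega_p$ with the conditional expectation $E$. It is legitimate because $\Omega_p$ takes values in $\Sigma$ and, on the abelian corner $W$, depends only on $|w|$ (it is ``lazy''), so that $E\Omega_p(w)=\Omega_p(w)$; this is precisely the noncommutative counterpart of the almost-commutation of symmetric centralizers with conditional expectations proved in Lemma~\ref{anyfurther}(b).
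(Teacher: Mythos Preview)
Your argument is correct and follows essentially the same route as the paper: pick an infinite orthogonal sequence of finite-trace projections, let $\mathcal A$ be the abelian subalgebra they generate, observe that $\Omega_p$ restricts to the Kalton--Peck map on $L^p(\mathcal A,\tau)$, and use the conditional expectation $E_{\mathcal A}$ to transport a hypothetical trivialization down to $L^p(\mathcal A,\tau)$. The only difference is in the endgame: the paper verifies nontriviality on $L^p(\mathcal A,\tau)$ by a self-contained computation (it first invokes amenability of $\mathcal A\simeq\ell^\infty$ to reduce to \emph{module} morphisms, which are necessarily diagonal, and then tests against the functions $f=\sum_{i\le n}(n\tau(e_i))^{-1/p}e_i$ to produce $\|\Omega_p f-\phi f\|_p=\log n$), whereas you simply cite the Kalton--Peck theorem. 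Your version is shorter and perfectly legitimate; the paper's is self-contained and avoids the (easy but unspoken) observation that the weighted space $\ell^p(a_n)$ carries the same Kalton--Peck twist as $\ell^p$ up to a linear perturbation.
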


\begin{proof}Needless to say $Z_p(\M, \tau)$ is a bimodule extension over $\M$. We shall prove that it doesn't split even as an extension of Banach spaces.
As $\M$ is infinite dimensional there is a sequence $(e_i)$ of mutually orthogonal projections having finite trace. Let $\mathcal A$ be the von Neumann subalgebra of $\M$ spanned by these projections. Notice that we may consider $\widetilde{\mathcal A}$ as a *-subalgebra of $\widetilde\M$ and $L^p(\mathcal A,\tau)$ as a subspace of $\lt$.

Clearly, $\Omega_p$ maps $L^p(\mathcal A, \tau)$ to $\widetilde{\mathcal A}$ as an $\mathcal A$-centralizer and we have a commutative diagram of inclusions
$$
\xymatrix{
L^p(\mathcal A, \tau) \ar[r] \ar[d] &  L^p(\mathcal A, \tau) \oplus_{\Omega_p} L^p(\mathcal A, \tau)  \ar[r] \ar[d] & L^p(\mathcal A, \tau) \ar[d]\\
L^p(\M, \tau)\ar[r] &  Z_p(\M, \tau) \ar[r]&  L^p(\M,\tau)
}
$$
On the other hand, the ``conditional expectation'' given by
$$
E_\mathcal A(f)=\sum_i \frac{\tau(fe_i)}{\tau(e_i)}e_i
$$
is a contractive projection on $L^p(\M, \tau)$ whose range is $ L^p(\mathcal A,\tau)$. The immediate consequence of all this is that
if the lower extension of the preceding  diagram splits, then so does the upper one.

Let us check that this is not the case. As
$\mathcal A$ is amenable (it is isometrically *-isomorphic to the algebra $\ell^\infty$) and $ L^p(\mathcal A, \tau)$ is a dual bimodule (it is isometrically isomorphic to $\ell^p$, which is reflexive) we have that the upper row in the above diagram splits as an extension of Banach spaces if and only if it splits as an extension of Banach $\mathcal A$-modules. And this happens if and only if there is a morphism $\phi:L^p(\mathcal A, \tau) \to \widetilde{\mathcal A}$ approximating $\Omega_p$ in the sense that
\begin{equation}\label{sense}
\|\Omega_p(f)-\phi(f)\|_p\leq \delta\|f\|_p
\end{equation}
for some constant $\delta$ and every $f\in L^p(\mathcal A, \tau)$. It is clear that every morphism $\phi:L^p(\mathcal A, \tau)\to \widetilde{\mathcal A}$ has the form $\phi(\sum_it_ie_i)=\sum_i\phi_i t_i e_i$ for some sequence of complex numbers $(\phi_i)$. Taking $f=e_i$ in (\ref{sense}) we see that $|\phi_i+\log\tau(e_i)|\leq \delta$. It follows that if (\ref{sense}) holds for some $\phi=(\phi_i)$ then it must hold for $\phi_i=-\log\tau(e_i)$, possibly doubling the value of $\delta$.

Fix $n\in\mathbb N$ and take $f=\sum_{i=1}^n t_ie_i$ normalized in $L^p(\tau)$ in such a way that the nonzero summands in the norm of $f$ agree:
$$
f=\sum_{i=1}^n (n\tau(e_i))^{-1/p}e_i.
$$
For this $f$ and $\phi_i=-\log\tau(e_i)$ we have $\Omega_p(f)-\phi(f)=-\log(n)f$, so $\|\Omega_p(f)-\phi(f)\|_p=\log(n)$, which makes impossible the estimate in (\ref{sense}).
\end{proof}


\subsection{Duality}
In this Section we extend Kalton-Peck duality results in \cite{kaltpeck} to all semifinite algebras by showing that for every trace $\tau$ the dual space of $Z_p(\M, \tau)$ is isomorphic to $Z_q(\M,\tau)$, where $p$ and $q$ are conjugate exponents, that is, $p^{-1}+q^{-1}=1$. (see \cite[Theorem 3.2]{newJ} for the particular case of Schatten classes). In order to achieve a sharp adjustment of the parameters, let us agree that, given $p\in(1,\infty)$ and a Lipschitz function $\vp:\R\to\C$, the associated Kalton-Peck centralizer $\Phi_p:L^p(\M,\tau)\to\widetilde\M$ is defined by $\Phi_p(f)=f\vp(p\log(|f|/\|f\|_p)$ and the corresponding Kalton-Peck space is $Z_p^\vp(\M,\tau)=L^p(\tau)\oplus_{\Phi_p} L^p(\tau)$. This is coherent with (\ref{op}), where $\vp$ is the identity on $\R$.

\begin{theorem}\label{th:duality}
Let $p$ and $q$ be conjugate exponents, $\vp$ a Lipschitz function, and $\tau$ be a trace. Then $
Z_q^\vp(\tau)$ is isomorphic to the conjugate of $
Z_p^\vp(\tau)$ under the pairing
\begin{equation}\label{pairing}
\langle(x,y), (v,w)\rangle=\tau(xw-yv)\quad\quad((x,y)\in
Z_q^\vp(\tau), (v,w)\in Z_p^\vp(\tau))
\end{equation}
\end{theorem}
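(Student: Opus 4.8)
The plan is to verify directly that the bilinear form (\ref{pairing}) puts $Z_q^\vp(\tau)$ and $Z_p^\vp(\tau)$ in duality, exactly as in the commutative Kalton--Peck argument, the only new ingredient being that one must keep track of the von Neumann algebra and the trace throughout. The computations rest on the elementary identity that underlies the whole phenomenon: for a positive normalized $f\in L^p(\tau)$ with polar-type data, the extremal functions $f^{pz}$ chosen in Section~\ref{kalton-peck} have derivative $\Phi_p(f)=f\vp(p\log|f|)$, and $f^{pz}$ evaluated (or differentiated) against the dual extremal $g^{qz}$ for a positive normalized $g\in L^q(\tau)$ produces, via $\tau(fg)$ and $\tau(f\log f\cdot g)+\tau(f\cdot g\log g)=(\text{const})\tau(fg\log(fg))$ type relations, precisely the boundary term that makes the pairing well defined and bounded.

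Concretely I would proceed in three steps. \textbf{Step 1: the pairing is bounded.} Given $(x,y)\in Z_q^\vp(\tau)$ and $(v,w)\in Z_p^\vp(\tau)$, write $y=\Phi_q(x)+y_0$ with $y_0\in L^q(\tau)$ and $\|y_0\|_q+\|x\|_q\leq 2\|(x,y)\|$, and similarly $w=\Phi_p(v)+w_0$. Then
$$
\tau(xw-yv)=\tau(x\Phi_p(v))-\tau(\Phi_q(x)v)+\tau(xw_0)-\tau(y_0 v).
$$
The last two terms are controlled by H\"older's inequality in $L^p(\M,\tau)$--$L^q(\M,\tau)$. For the first two, one reduces (using that both centralizers are determined by their action on positive normalized elements, that $\Phi_p,\Phi_q$ are homogeneous, and an approximation by $\sigma$-elementary operators as recorded after the proof of Theorem~\ref{main}) to estimating $\tau(x\Phi_p(v))-\tau(\Phi_q(x)v)$, and here the key is the pointwise-in-spectrum cancellation: informally, on the joint spectral data of $|x|$ and $|v|$ one has $t\vp(p\log t)s - t\,s\vp(q\log s)$-type terms whose trace telescopes because $p\log t+q\log s$ is, up to the normalization, the logarithm of $ts$. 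This gives $|\tau(x\Phi_p(v))-\tau(\Phi_q(x)v)|\leq K\|x\|_q\|v\|_p$, hence $|\langle(x,y),(v,w)\rangle|\leq K'\|(x,y)\|\,\|(v,w)\|$.

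\textbf{Step 2: the induced map $Z_q^\vp(\tau)\to (Z_p^\vp(\tau))^*$ is injective with closed range}, and \textbf{Step 3: it is surjective.} For Step 2 one checks that if $\tau(xw-yv)=0$ for all $(v,w)$, then testing against $(v,0)$ forces $y=0$ (by $L^q$--$L^p$ duality of the noncommutative $L^p$-spaces, which holds for every semifinite $\tau$), and then testing against $(0,w)$ forces $x=0$. For surjectivity, given $\psi\in(Z_p^\vp(\tau))^*$, restrict $\psi$ to the copy of $L^p(\tau)$ sitting inside $Z_p^\vp(\tau)$ as $w\mapsto(0,w)$; noncommutative $L^p$-duality produces $x\in L^q(\tau)$ representing it, namely $\psi(0,w)=\tau(xw)$. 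Then $\psi(v,w)-\tau(xw)$ vanishes on that copy, so it factors through the quotient $Z_p^\vp(\tau)/L^p(\tau)\cong L^p(\tau)$ and is again represented by some $-y\in L^q(\tau)$; one then verifies, by evaluating on the extremal pairs $(v,\Phi_p(v))$, that the pair $(x,y)$ actually lies in $Z_q^\vp(\tau)$ (i.e. $y-\Phi_q(x)\in L^q(\tau)$) rather than merely in $L^q(\tau)\times L^q(\tau)$, and that $(x,y)$ represents $\psi$. A norm count through these three steps, together with the open mapping theorem, upgrades the algebraic bijection to an isomorphism.

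\textbf{The main obstacle} is Step 1, and within it the cancellation estimate $|\tau(x\Phi_p(v))-\tau(\Phi_q(x)v)|\leq K\|x\|_q\|v\|_p$. In the commutative case this is a pointwise computation with the Lipschitz function $\vp$; in the noncommutative setting $|x|$ and $|v|$ need not commute, so the naive "pointwise on the spectrum" argument is unavailable. I expect the right route is to reduce to $\sigma$-elementary (hence, via a single trace-preserving $*$-homomorphism $\xi$ onto an $L^\infty$-space, genuinely commutative) operators where the identity is exact, and then pass to the general case by the density and continuity properties of $\Phi_p$, $\Phi_q$ built into Theorem~\ref{main} and Lemma~\ref{anyfurther}; the Lipschitz hypothesis on $\vp$ is exactly what keeps all the error terms of the correct homogeneous order $\|x\|_q\|v\|_p$. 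The remaining steps are then formal consequences of the duality $L^p(\M,\tau)^*=L^q(\M,\tau)$ valid for all semifinite traces, combined with the push-out/pull-back description of $Z_p^\vp(\tau)$.
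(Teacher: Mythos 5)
Your overall architecture matches the paper's: split off the bounded parts $x-\Phi_q(y)$ and $v-\Phi_p(w)$ (you have the coordinates swapped relative to the paper's convention for $Y\oplus_\Omega Z$, but that is cosmetic), reduce everything to a single cancellation estimate of the form $|\tau(\Phi_q(y)w-y\Phi_p(w))|\leq M\|y\|_q\|w\|_p$, and then upgrade the resulting bounded map to an isomorphism. For that last step the paper places $u$ in a commutative diagram over the Banach-space adjoint of the extension induced by $\Phi_p$ and invokes the five-lemma plus the open mapping theorem, which is tidier than your direct injectivity/surjectivity argument, though both routes are viable.

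The genuine gap is in your Step 1, which is the heart of the theorem, and the fix you propose does not work. You correctly note that the two operators need not commute, but you then suggest reducing to ``$\sigma$-elementary (hence, via a single trace-preserving $*$-homomorphism $\xi$ onto an $L^\infty$-space, genuinely commutative) operators where the identity is exact.'' There is no single such $\xi$: even when $y=\sum_i t_iy_i$ and $w=\sum_j s_jw_j$ are $\sigma$-elementary, the families of projections $(y_i)$ and $(w_j)$ generate two abelian subalgebras that in general do not commute with each other, so the problem cannot be transported to one commutative $L^p$; nor is the cancellation exact even there. What the paper actually does is expand
\[
\Phi_q(y)w-y\Phi_p(w)=\sum_{i,j}t_is_j\bigl(\vp(\log|t_i|^q)-\vp(\log|s_j|^p)\bigr)y_iw_j,
\]
dominate the scalar factor by $L_\vp\,|t_i||s_j|\,\bigl|\log(|t_i|^q/|s_j|^p)\bigr|$ using the Lipschitz constant of $\vp$, control that by the Kalton--Peck inequality $|ts\log(|t|^q/|s|^p)|\leq (p/e)(|t|^q+|s|^p)$ (the case $n=1$ of \cite[Lemma 5.2]{kaltpeck}), and then --- this is the specifically noncommutative input missing from your sketch --- sum the double series using that $\tau(y_iw_j)\geq 0$ for any two projections and that $\sum_j\tau(y_iw_j)=\tau(y_i)$ and $\sum_i\tau(y_iw_j)=\tau(w_j)$ once the families are completed to resolutions of the identity. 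The passage to general $y,w$ then goes through self-adjoint operators by approximation and through arbitrary ones by the quasi-linearity of $\Phi_p,\Phi_q$. Without the positivity of $\tau(y_iw_j)$ your ``telescoping'' claim is unsubstantiated. (A smaller slip: $(0,w)$ need not lie in $Z_p^\vp(\tau)$ unless $\Phi_p(w)\in L^p(\tau)$; the embedded copy of $L^p(\tau)$ is $v\mapsto(v,0)$, so your Step 3 has to be rearranged accordingly.)
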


\begin{proof} The proof depends on the following elementary inequality: given $s,t\in\C$ one has
\begin{equation}\label{ineq}
\left| ts\left(\log \frac{\left|t\right|^q}{\left|s\right|^{p}}\right)\right|\leq \frac{p}{e}\left(\left |t \right |^q+ \left |s\right|^p\right).
\end{equation}
This is (a rewording of) the case $n=1$ of \cite[Lemma 5.2]{kaltpeck} that Kalton and Peck use in the proof of \cite[Theorem 5.1]{kaltpeck}.
Let us see that the pairing is continuous. To this end write
$$
xw-yv=(x-\Phi_q(y))w+\Phi_q(y)w-y(v-\Phi_p(w))-y\Phi_p(w).
$$
As $\|(x-\Phi_q(y))w\|_1\leq \|x-\Phi_q\|_q\|w\|_p$ and, similarly,
$\|y(v-\Phi_p(w))\|_1\leq \|y\|_q\|v-\Phi_p(w)\|_p$ it suffices to obtain  an estimate of the form
\begin{equation}\label{suf}
|\tau(\Phi_q(y)w-y\Phi_p(w))|\leq M\|y\|_q\|w\|_p.
\end{equation}

First, let us assume $y$ and $w$ are $\sigma$-elementary operators with $\|y\|_q=\|w\|_p=1$ and representations
$y=\sum t_i y_i$ and $w=\sum s_j w_j$ converging in $L^q(\tau)$ and $L^p(\tau)$, respectively. We may assume with no loss of generality that $\sum_i y_i=\sum_j w_j=\bf 1_\M$ (summation in the $\sigma(\M,\M_*)$ topology).
We have
\begin{align*}
\Phi_q(y)w-y\Phi_p(w)&=\left(\sum_i t_i\vp(q\log|t_i|)y_i \right)\left( \sum_js_jw_j\right)- \left(\sum_i t_i y_i \right)\left( \sum_js_j\varphi(p\log|s_j|)w_j\right)\\
&=\sum_{i,j} t_is_j\left(\varphi(\log|t_i|^q)- \varphi(\log|s_j|^p)\right)y_iw_j.
\end{align*}
Applying (\ref{ineq}) and taking into account that the product of any two projections has positive trace we can estimate the left-hand of (\ref{suf}) as follows:
\begin{eqnarray*}
|\tau(\Phi_q(y)w-y\Phi_pw)|&\leq &\sum_{i,j}|t_i||s_j||\varphi(\log|t_i|^q)- \varphi(\log|s_j|^p)|\tau(y_iw_j)\\
&\leq & L_{\varphi}\sum_{i,j}|t_i||s_j|\left|\log\frac{|t_i|^q}{|s_j|^{p}}\right|\tau(y_iw_j)\\
&\leq &  L_{\varphi}\sum_{i,j}  \frac{p}{e}\left(|t_i|^q+ |s_j|^p\right)\tau(y_iw_j)\\
&=&\frac{p}{e}L_{\varphi}\left( \sum_i|t_i|^q\left( \sum_j\tau(y_iw_j)    \right)  +  \sum_j|s_j|^p \left( \sum_i\tau(y_iw_j)   \right)        \right)\\
&=&\frac{p}{e}L_{\varphi}\left( \sum_i|t_i|^q \tau(y_i)    +  \sum_j|s_j|^p \tau(w_j)         \right)\\
&=& \frac{2pL_{\varphi}}{e},
\end{eqnarray*}
where $L_{\varphi}$ denotes the Lipschitz constant of $\varphi$. Assuming for instance that $1<p\leq 2$, by homogeneity
 \begin{equation}\label{suf2}
|\tau(\Phi_q(y)w-y\Phi_p(w))|\leq 2 L_{\varphi}\|y\|_q\|w\|_p,
\end{equation}
whenever $y$ and $w$ are $\sigma$-elementary operators. Now, suppose $y$ and $w$ are self-adjoint. It is easy to find a sequences of
$\sigma$-elementary operators $(y_n)$ and $(w_n)$ such that the numerical sequences
$$
\|y_n-y\|_q,\quad \|\Phi_qy_n-\Phi_qy\|_q, \quad \|w_n-w\|_p,\quad \|\Phi_pw_n-\Phi_pw\|_p
$$
are all convergent to zero. This implies that
$$
\|(\Phi_q(y)w-y\Phi_p(w))-(\Phi_q(y_n)w_n-y_n\Phi_p(w_n))\|_1\to 0
$$
and so (\ref{suf2}) holds when  $y$ and $w$ are self-adjoint. Next, if $y\in L^q(\M,\tau)$ and $w\in\lt$ is self-adjoint, we can write $y=y_1+iy_2$, with each $y_i$ self-adjoint and since $\Phi_q$ is quasi-linear one has $\|\Phi_q(y)-\Phi_q(y_1)-i\Phi_q(y_2)\|_q\leq Q[\Phi_q](\|y_1\|_q+\|y_2\|_q)\leq 2Q[\Phi_q]\|y\|_q$ and
\begin{align*}
|\tau(\Phi_q(y)w-y\Phi_pw)|&= |\tau\left((\Phi_qy- \Phi_qy_1-i\Phi_qy_2 )w+(\Phi_qy_1+i\Phi_qy_2)w-(y_1+iy_2)\Phi_p(w)\right)|\\
&\leq   2Q[\Phi_q]\|y\|_q\|w\|_p+ 2 L_{\varphi}(\|y_1\|_q+\|y_2\|_q)\|w\|_p\\
&\leq  \left(2Q[\Phi_q]+ 4 L_{\varphi}\right)\|y\|_q\|w\|_p.
\end{align*}
Finally, writing $w=w_1+iw_2$ with each $w_i$ self-adjoint and using the quasilinearity of $\Phi_p$ one arrives to (\ref{suf}), where $M$ depends on $p, q$ and $L_\vp$, but not on $f$ or $g$.

Therefore, going back to (\ref{pairing}) we have
\begin{eqnarray*}
|\tau(xw-yv)|& = &|\tau((x-\Phi_q(y))w+\Phi_q(y)w-y(v-\Phi_p(w))-y\Phi_p(w))|\\
&\leq& \|(v-\Phi_p(w)\|_{p}\|y\|_{q} +
M\|w\|_{p}\|y\|_{q} +
\|w\|_{p}\|(x-\Phi_q(y)\|_{q}\\
&\leq& (M+1) \|(x,y)\|_{\Phi_q}\|(v,w)\|_{\Phi_p}.
\end{eqnarray*}
The remainder of the proof is quite easy: we have just seen that the map $u:Z_q^\vp(\tau)\to (Z_p^\vp(\tau))^*$ given by $(u(x,y))(v,w)=\tau(xw-yv)$ is bounded. On the other hand, the following diagram is commutative:
$$
\begin{CD}
L^q(\tau)@>>> Z_q^\vp(\tau) @>>>L^q(\tau)\\
@| @VV u V @VV -1 V\\
L^p(\tau)^*@>>> (Z_p^\vp(\tau))^* @>>>L^p(\tau)^*
\end{CD}
$$
Here, the lower row is the adjoint (in the Banach space sense) of the extension induced by $\Phi_p$. It follows that $u$ is one-to-one, onto, and open.
\end{proof}

\subsection{The role of the trace} Theorem~\ref{k-semi} cannot be extended to arbitrary centralizers. Actually, the following example shows that the behavior of $\Phi_\tau$ may depend strongly on the trace $\tau$.

\begin{example} For each $\pm$ and $p\in(1,\infty)$, consider the centralizer on $L^p(\R^+)$ given by
$
\Phi^{\pm}(f)= f(\imath^\pm(\log(|f|/\|f\|_p)))
$, where
$\imath^+(t)=\max\{0,t\}$ and $\imath^-(t)=\min\{0,t\}$
Then, with the notation of Theorem~\ref{main}:
\begin{itemize}
\item[(a)] $\Phi^{\pm}$ is nontrivial on $L^p(\R^+)$.
\item[(b)] If $\tau$ is bounded away from zero on the projections of $\M$ then $\Phi^+_\tau$ is trivial on $L^p(\M,\tau)$,  while $\Phi^-_\tau$ is nontrivial as long as $\M$ is infinite-dimensional.
\item[(c)] If $\tau(\bf 1_\M)<\infty$  then $\Phi^-_\tau$ is trivial on $L^p(\M,\tau)$, while $\Phi^+_\tau$ is nontrivial as long as $\M$ is infinite-dimensional.
\end{itemize}
\end{example}
\begin{proof} (See the proof of Theorem~\ref{k-semi}.)
Let $\Psi:L^p\to L^0$ be any centralizer. Let $(A_i)$ be a sequence of disjoint measurable sets, with finite and positive measure and let $\mathscr A$ be the least $\sigma$-algebra of Borel sets containing every $A_i$. Then, if $\Psi$ maps $L^p(\R^+,\mathscr A,\lambda)$ to $L^0(\R^+,\mathscr A,\lambda)$, in particular if $\Psi$ is lazy, then it defines an $L^\infty(\R^+,\mathscr A,\lambda)$ centralizer on $L^p(\R^+,\mathscr A,\lambda)$. Moreover, if $\Psi$ is trivial on $L^p(\R^+,\mathscr A,\lambda)$ (as a quasi-linear map), then it is also trivial as an $L^\infty(\R^+,\mathscr A,\lambda)$ centralizer.

(a) To check that $\Phi^+$ is nontrivial on $L^p(\R^+)$ just take a sequence $(A_i)$ with $|A_i|=2^{-i}$. To check that $\Phi^-$ is nontrivial, take $A_i$ with $|A_i|= 1$ for all $i\in\N$.

(b) We may assume $\tau(e)\geq 1$ for every projection $e\in\M$. Pick a positive, $\sigma$-elementary $f$ normalized in $L^p(\tau)$ so that $f=\sum_{n=1}^\infty f_ie_i$, with $f_i\geq 0$ and $e_i$ disjoint projections. Obviously $f_i\leq 1$ for every $i$ and so $\Phi^+(f)=0$. It follows that $\Phi^+$ is bounded on $L^p(\tau)$.

As $\Phi^+_\tau +\Phi^-_\tau=\Omega_p$ and $\Phi^+$ is trivial we see that $\Phi^-$ must be nontrivial since $\Omega_p$ is nontrivial unless $\M$ is finite-dimensional.

(c) We may assume $\tau({\bf 1}_\M)= 1$. Take a positive, normalized $f\in L^p(\M,\tau)$ and write $f=\int_0^\infty\lambda de(\lambda)$ to be its spectral resolution. Set $g=\int_0^1\lambda de(\lambda)$ and $h=\int_{1^+}^\infty\lambda de(\lambda)$. One has
$$
\| \Phi^-_\tau(f)-\Phi^-_\tau(g)-\Phi^-_\tau(h)  \|_p\leq Q[\Phi^-_\tau](\|g\|_p+\|h\|_p)\leq 2Q[\Phi^-_\tau].
$$
Obviously, $\Phi^-_\tau(h)=0$, while $g\in\M$, with $\|g\|_\infty\leq 1$. Hence
$$
\|\Phi^-_\tau(g)-g\Phi^-_\tau({\bf 1}_\M)  \|_p\leq C[\Phi^-_\tau]\|g\|_\infty\|1\|_p\leq C[\Phi^-_\tau].
$$
But $\Phi^-_\tau({\bf 1}_\M)=0$ and so  $\|\Phi^-_\tau(f)\|_p\leq 2Q[\Phi^-_\tau]+ C[\Phi^-_\tau]$.
\end{proof}

\subsection{Commutativity and symmetry}
The centralizers $\Phi_\tau$ appearing in  Theorem~\ref{main} have the property that, if $x\in\lt$ is self-adjoint, then $x$ and $\Phi_\tau(x)$ commute. This is not by accident. Indeed, suppose that $\Psi:\lt\to\widetilde\M$ is any bicentralizer and that $x$ is selfadjoint. Let $\mathcal A$ be a maximal abelian self-adjoint subalgebra containing the spectral projections of $x$, so that $ax=xa$ for every $a\in\mathcal A$. Then
$$
\|a\Psi x- (\Psi x)a\|_p= \|a\Psi x- \Psi(ax)+\Psi(xa)-(\Psi x)a\|_p\leq 2C[\Psi]\|a\|\|x\|,
$$
and $\|\Psi(x)-u(\Psi x)u^*\|\leq M\|x\|$ for every unitary $u\in \mathcal A$. Averaging the difference $\Psi(x)-u(\Psi x)u^*$ over the unitary group of $\mathcal A$ one obtains an element $B(x)\in\lt$ such that $\|B(x)\|_p\leq M\|x\|_p$ and such that $\Psi(x)-B(x)$ commutes with $\mathcal A$. Thus, if we define $\tilde\Psi(x)=\Psi(x)-B(x)$ we get a centralizer with the additional property that $x$ is self-adjoint, then $\tilde\Psi(x)$ commutes with (the spectral projections of) $x$.

\medskip

One may wonder what is the role of the symmetry of the starting centralizer $\Phi$ in Theorem~\ref{main}. In general one cannot expect to get bicentralizers out from arbitrary (lazy) centralizers, as shown by Kalton in \cite[Theorem 8.3]{k-tams}. And this is so because, if $\M$ is large enough, the bimodule structure of $\lt$ already encodes the ``symmetric'' structure of its ``commutative'' subspaces. Actually even the definition of $X(\M,\tau)$ requires the symmetry of the function space $X$.
To explain this, let us consider the following situation. Let $\mathcal H=\ell^2$ be the standard Hilbert space of 2-summable sequences $f:\N\to\C$ and consider the algebra $B(\mathcal H)$ of all bounded operators on $\mathcal H$, with the usual trace. Then the corresponding $L^p$ spaces are just the Schatten classes $S^p$.

Each bounded sequence $b\in\ell^\infty$ induces a multiplication operator $M_b(f)=b\cdot f$, which is ``diagonal'' with respect to the unit basis of $\mathcal H$.

It is clear from the preceding remark that if $\Psi$ is any bicentralizer on $S_p$, then one may assume that $\Psi(x)$ is ``diagonal'' whenever $x$ is so. Since diagonal operators in $S_p$ correspond with multiplication operators by a sequence in $\ell^p$ we see that $\Psi$ gives rise to a mapping $\psi$ (actually an $\ell^\infty$-centralizer) on $\ell^p$ defined by $\Psi(M_f)=M_{\psi(f)}$.

Let us see that $\psi$ must be symmetric. Indeed, let $u$ be a permutation of $\mathbb N$ and consider the isometry of $\mathcal H$ given by $U(h)=h\circ u$. Then $U^*=U^{-1}$ is given by $h\mapsto h\circ u^{-1}$.
Note that if $b\in\ell^\infty$, then $UM_bU^*=M_{b\circ u}$ since for $h\in\mathcal H$
$$
UM_bU^*(h)=U(M_b(h\circ u^{-1}))=U(b\cdot(h\circ u^{-1}))=(b\circ u)\cdot h=M_{b\circ u}(h).
$$
Thus, if $f\in\ell^p$, and taking $b=\psi(f)$, we have
\begin{align*}
\|\psi(f\circ u)&-(\psi(f))\circ u\|_{\ell^p}= \|M_{\psi(f\circ u)}-M{(\psi(f))\circ u}\|_{S^p}=
\|\Psi(M_{f\circ u})-U\Phi(M_f)U^*\|_{S_p}\\
&= \|U\Psi(M_{f})U^*-U\Phi(M_f)U^*\|_{S_p}\leq C[\Psi]\|M_f\|_{S^p}=C[\Psi]\|f\|_{\ell^p}
\end{align*}
and $\psi$ is symmetric.

\section{Type III algebras}
In this Section we abandon the comfortable tracial setting and we face the problem of twisting arbitrary $L^p$ spaces, including those built over type III von Neumann algebras. There are several constructions of these $L^p$ spaces, none of them elementary. All provide bimodule structures on the resulting spaces that turn out to be equivalent at the end.

It is natural to ask for (nontrivial) self-extensions of $L^p(\M)$ in the category of Banach bimodules over $\M$. Unfortunately we have been unable to construct such objects; 
nevertheless we can still use the interpolation trick to obtain self extensions as (one-sided) modules. In this regard the most suited representation of $L^p$ spaces is one due to Kosaki.

For the sake of clarity, we can restrict here to $\sigma$-finite algebras so that we can take functionals from $\M_*$. So, let $\M$ be a von Neumann algebra and $\phi\in \M_*$ a faithful positive functional. (We don't normalize $\phi$ because the restriction of a state to a direct summand is not a state; see Lemma~\ref{arranged}(b) below.) We ``include'' $\M$ into $\M_*$ just taking $a\in \M\mapsto a\phi\in\M_*$ thus starting the interpolation procedure with $\Sigma=\M_*$ as ``ambient'' space and $\Delta=\M\phi$, to which the norm and $\sigma(\M,\M_*)$ topology are transferred without further mention. Then, the Kosaki (left) version of the space $L^p(\M)$ is defined as
$$L^p(\phi)=L^p(\M,\phi)=[\M\phi,\M_*]_\theta,\quad\quad(\theta=1/p).$$

We emphasize we are referring to Kosaki's construction \cite{kosaki, raynaud, pisier-handbook} and not to that of Terp \cite{terp1, terp2}.
Recall that $\M_*$ is an $\M$-bimodule with product given by
$$
\langle a\psi b, x\rangle= \langle \psi, bxa\rangle\quad\quad(\psi\in\M_*; a,b,x\in\M).
$$
The inclusion
$\cdot\phi: \M\to \M_*$ is, however, only a left-homomorphism:
$
(ba)\cdot\phi= b(a\phi).
$
Asking for a two-sided homomorphism means that one should also have
$$
(ab)\cdot\phi=ab\phi = a\phi b =(a\phi)\cdot b .
$$
In particular (take $a=1$) $b\phi=\phi b$ for all $b\in\M$, which happens if and only if $\phi$ is a trace.

Let $\mathcal G=\mathcal G(\M,\phi)$ denote the Calder\'on space associated to the couple $(\M\phi,\M_*)$ and put $\mathcal G_0=\mathcal G(\M,\phi)_0=\{g\in\cl{G}:g(\theta)=0\}$, where $\theta=1/p$ is fixed. These are left $\M$-modules in the obvious way and so are the quotients $L^p(\M,\phi)=\cl{G}/\cl{G}_0$.
Plug and play to get the push-out diagram
\begin{equation}\label{kosaki}
\begin{CD}
\cl{G}_0=\ker\delta_\theta @>>> \mathcal G @>\delta_\theta >>L^p(\M,\phi)\\
@V\delta'_\theta VV @VVV @| \\
L^p(\M,\phi)@>>> \PO @>>>L^p(\M,\phi)
\end{CD}
\end{equation}
(where $\theta=1/p$) and observe that every arrow here is a homomorphism of left $\M$-modules. Let us denote by $Z_p(\M,\phi)$ or
$Z_p(\phi)$ the push-out space in the preceding diagram. This is coherent with the notation used in the tracial case.

We have mentioned that there is also a right action of $\M$ on $L^p(\M,\phi)$ which is compatible with the given left action and makes $L^p(\M,\phi)$ into a bimodule. All known descriptions of that action are quite heavy and depend on Tomita-Takesaki theory.
That action is in general incompatible 
with the arrows in the preceding diagram.

Now, we are confronted with the problem of deciding whether the lower extension in Diagram~\ref{kosaki} is trivial or not. The pattern followed in the proof of Theorem~\ref{k-semi} cannot be used now because we have only a left multiplication in $Z_p(\M,\phi)$.

Suppose we are given two von Neumann algebras $\M$ and $\mathcal N$ with distinguished faithful normal states $\phi$ and $\psi$. If $u_\infty:\M\to \mathcal N$ and $u_1:\M_*\to \mathcal N_*$ are operators making the square
$$
\begin{CD}
\M@> u_\infty >> \mathcal N\\
@V \cdot\phi  VV  @VV  \cdot\psi  V\\
\M_*@>  u_1 >> \mathcal N_*
\end{CD}
$$
commutative,
then interpolation yields operators
$u_p : L^p(\M,\phi)\to L^p(\mathcal N,\psi)$ for each $p\in(1,\infty)$.

\begin{lemma}\label{arranged}
Let $\M$ be a von Neumann algebra with a faithful positive normal functional $\phi$. Let $\mathcal N$ be a subalgebra of $\M$ equipped with the restriction of $\phi$. Suppose either
\begin{itemize}
\item[(a)] $\mathcal N$ is a von Neumann subalgebra of $\M$ and there is a normal conditional expectation $\e:\M\to \mathcal N$ leaving $\phi$ invariant; or
\item[(b)] $\mathcal N$ is a von Neumann algebra, and a direct summand in $\M$.
\end{itemize}
Then, for each $p\in[1,\infty]$, there are homomorphisms of $\mathcal N$-modules $\imath_p : L^p(\mathcal N,\phi|_\mathcal N)\to L^p(\M,\phi)$ and $\e_p: \lt\to  L^p(\mathcal N,\phi|_\mathcal N)$ such that $\e_p\circ \imath_p$ is the identity on $ L^p(\mathcal N,\phi|_\mathcal N)$.
\end{lemma}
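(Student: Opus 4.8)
The plan is to construct the required maps at the two endpoints $p=\infty$ and $p=1$, verify that each of them is at the same time a morphism of the interpolation couples $(\mathcal N\,\phi|_{\mathcal N},\mathcal N_*)$ and $(\M\phi,\M_*)$ and a homomorphism of left $\mathcal N$-modules, and then let complex interpolation (in the functorial form recalled just before the statement, and in Section~\ref{cits}) transport everything to $0<\theta=1/p<1$. For $p=\infty$ and $p=1$ the assertion will be nothing but the endpoint data itself.

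Both hypotheses can be treated by a single device. In case (a) put $j=\e\colon\M\to\mathcal N$; in case (b) let $z\in\M$ be the central projection with $\mathcal N=z\M$ and put $j(a)=za$. In either situation $j\colon\M\to\mathcal N$ is a normal, contractive $\mathcal N$-bimodule map with $j|_{\mathcal N}=\mathrm{id}_{\mathcal N}$, and $\phi|_{\mathcal N}$ is a faithful positive normal functional on $\mathcal N$ (faithfulness because $\mathcal N_+\subseteq\M_+$). The computation that drives the whole argument is that, for $x\in\mathcal N$ and $y\in\M$,
$$
\phi(xy)=\phi\bigl(x\,j(y)\bigr),\qquad \phi(yx)=\phi\bigl(j(y)\,x\bigr),
$$
which holds in case (a) by the module property of $\e$ together with the invariance $\phi\circ\e=\phi$, and in case (b) because $z$ is central and $zx=x$ for $x\in z\M$.

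Now let $\imath_\infty\colon\mathcal N\hookrightarrow\M$ be the inclusion and $\e_\infty=j$; on the predual side let $j_*\colon\mathcal N_*\to\M_*$ be $j_*\psi=\psi\circ j$ and let $(\imath_\infty)_*\colon\M_*\to\mathcal N_*$ be the restriction map $\psi\mapsto\psi|_{\mathcal N}$. The displayed identities say exactly that the squares
$$
\begin{CD}
\mathcal N @>{\imath_\infty}>> \M\\
@V{\cdot\phi|_{\mathcal N}}VV @VV{\cdot\phi}V\\
\mathcal N_* @>{j_*}>> \M_*
\end{CD}
$$
and
$$
\begin{CD}
\M @>{j}>> \mathcal N\\
@V{\cdot\phi}VV @VV{\cdot\phi|_{\mathcal N}}V\\
\M_* @>{(\imath_\infty)_*}>> \mathcal N_*
\end{CD}
$$
commute; moreover all four maps are homomorphisms of left $\mathcal N$-modules (for the predual ones one uses the bimodule law $\langle a\psi b,x\rangle=\langle\psi,bxa\rangle$ on $\M_*$). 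Interpolating the first couple morphism at $\theta=1/p$ yields $\imath_p\colon L^p(\mathcal N,\phi|_{\mathcal N})\to L^p(\M,\phi)$ and the second yields $\e_p\colon L^p(\M,\phi)\to L^p(\mathcal N,\phi|_{\mathcal N})$; since the $\mathcal N$-action on the relevant Calder\'on spaces is pointwise, both $\imath_p$ and $\e_p$ are homomorphisms of left $\mathcal N$-modules.

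Finally, $j\circ\imath_\infty=\mathrm{id}_{\mathcal N}$ because $j|_{\mathcal N}=\mathrm{id}_{\mathcal N}$, and $(\imath_\infty)_*\circ j_*=\mathrm{id}_{\mathcal N_*}$ because $(j_*\psi)|_{\mathcal N}=\psi\circ j|_{\mathcal N}=\psi$. Hence the composite couple morphism $(\e_\infty,(\imath_\infty)_*)\circ(\imath_\infty,j_*)$ is the identity morphism of $(\mathcal N\,\phi|_{\mathcal N},\mathcal N_*)$, and functoriality of complex interpolation gives $\e_p\circ\imath_p=\mathrm{id}_{L^p(\mathcal N,\phi|_{\mathcal N})}$ for all $p\in(1,\infty)$; for $p=1$ and $p=\infty$ this is already the endpoint identity. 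The only step that genuinely requires care is the verification of the two commuting squares — i.e.\ coaxing hypotheses (a) and (b) into the single displayed identity — which is precisely why the statement is split into those two cases; everything else is formal.
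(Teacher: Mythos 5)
Your proof is correct and follows essentially the same route as the paper's: the same two commuting endpoint squares (inclusion and conditional expectation on the algebra side, preadjoint and restriction on the predual side), followed by functoriality of complex interpolation to get $\imath_p$, $\e_p$ and the identity $\e_p\circ\imath_p=\mathrm{id}$. The only cosmetic difference is that you unify cases (a) and (b) through the single map $j$ and the identity $\phi(xy)=\phi(x\,j(y))$, whereas the paper writes out case (a) and notes that (b) follows by substituting $a\mapsto eae$ for $\e$.
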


\begin{proof} (a)
We have assembled the hypotheses in order to guarantee the commutativity
of the diagram
\begin{equation}\label{assembled}
\begin{CD}
\mathcal N @> \imath >> \M@> \e >> \mathcal N\\
@V \cdot\phi|_\mathcal N  VV @V \cdot\phi  VV   @VV  \cdot\phi|_\mathcal N  V\\
\mathcal N_*@>\e_* >> \M_*@>  \imath_* >> \mathcal N_*
\end{CD}
\end{equation}
Here, $\imath:\mathcal N\to \M$ the inclusion map and the subscript indicates preadjoint (in the Banach space sense), in particular $\imath_*$ is plain restriction.

Indeed, for $a\in \mathcal N$, one has $\e_*(a\phi)=a\e_*(\phi)=a\phi$, so the left square commutes. As for the right one, taking $a\in \mathcal N, b\in \mathcal M$ we have
$$
\langle \e(b)\phi, a\rangle =
\langle \phi, a\e(b)\rangle =
\langle \phi, \e(ab)\rangle =
\langle \phi, ab\rangle =
\langle b\phi, a\rangle.
$$
Notice, moreover, that $\e\circ\imath$ is the identity on $\mathcal N$, while $\imath_*\circ\e_*$ is the identity on $\mathcal N_*$. Therefore, interpolating $(\imath, \e_*)$ we get operators $\imath_p:L^p(\mathcal N,\phi|_\cl{N}\to L^p(\mathcal M,\phi)$ for $1\leq p\leq \infty$, while $(\e,\imath_*)$ gives operators $\e_p:L^p(\mathcal M,\phi)\to L^p(\mathcal N,\phi|_\cl{N})$. And since $\e_p\circ \imath_p$ is the identity on $L^p(\mathcal N,\phi|_\cl{N})$ we are done.

(b) In this case we can use the same diagram, just replacing $\varepsilon$ by the projection $P:\M\to\mathcal N$ given by $P(a)=e ae$, where $e$ is
the unit of $\mathcal N$. Then $P_*:\mathcal N_*\to\M_*$ is given by $\langle P_*(\psi), b\rangle=  \langle \psi, ebe\rangle$.
\end{proof}

The following step is the result we are looking for.

\begin{lemma}\label{complemented}
With the same hypotheses as in Lemma~\ref{arranged}, $Z_p(\mathcal N,\phi|_\mathcal N)$ is a complemented subspace of $Z_p(\M,\phi)$ for every $1< p<\infty$.
\end{lemma}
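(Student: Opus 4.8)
The statement to prove is that, under the hypotheses of Lemma~\ref{arranged}, the space $Z_p(\mathcal N,\phi|_\mathcal N)$ is a complemented subspace of $Z_p(\M,\phi)$. The natural strategy is to lift the splitting maps $\imath_p$ and $\e_p$ of Lemma~\ref{arranged} from the level of the $L^p$-spaces to the level of the twisted sums. Recall $Z_p(\M,\phi)$ sits in a push-out diagram built from the quotient $\delta_\theta:\mathcal G(\M,\phi)\to L^p(\M,\phi)$, and similarly for $\mathcal N$. The plan is: first produce a commutative square at the level of Calder\'on spaces, $\mathcal G(\mathcal N,\phi|_\mathcal N)\to\mathcal G(\M,\phi)$ and back, induced by $\imath$ and $\e$ (or $P$) acting pointwise on analytic functions on $\mathbb S$; second, transfer this through the push-out construction to get homomorphisms $Z_p(\mathcal N,\phi|_\mathcal N)\to Z_p(\M,\phi)$ and $Z_p(\M,\phi)\to Z_p(\mathcal N,\phi|_\mathcal N)$; third, check the composition is the identity.

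More concretely, I would argue as follows. From diagram~(\ref{assembled}) the pairs $(\imath,\e_*)$ and $(\e,\imath_*)$ (respectively $(\imath,P_*)$ and $(P,\imath_*)$ in case (b)) are morphisms of the interpolation couples $(\mathcal N\,\phi|_{\mathcal N},\mathcal N_*)$ and $(\M\phi,\M_*)$. Composing with analytic functions, each such morphism of couples induces a bounded operator between the corresponding Calder\'on spaces: $I:\mathcal G(\mathcal N,\phi|_{\mathcal N})\to\mathcal G(\M,\phi)$, $g\mapsto \imath\circ g$ on the line $\Re z=0$ and $\e_*\circ g$ on $\Re z=1$ — more precisely, the function $z\mapsto$ (the appropriate interpolated value), exactly as the $u_p$ in the paragraph preceding Lemma~\ref{arranged} are built — and likewise $E:\mathcal G(\M,\phi)\to\mathcal G(\mathcal N,\phi|_{\mathcal N})$. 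Because $\e\circ\imath=\mathrm{id}_{\mathcal N}$ and $\imath_*\circ\e_*=\mathrm{id}_{\mathcal N_*}$ (stated in the proof of Lemma~\ref{arranged}), one has $E\circ I=\mathrm{id}$ on $\mathcal G(\mathcal N,\phi|_{\mathcal N})$. Moreover $I$ and $E$ commute with the evaluation maps $\delta_\theta$ and with the derivatives $\delta'_\theta$, and they carry $\ker\delta_\theta$ into $\ker\delta_\theta$; in particular $E(I(g))=g$ forces $\delta'_\theta I = \imath_p\,\delta'_\theta$ and $\delta_\theta I = \imath_p\,\delta_\theta$, and dually for $E$. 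Feeding these compatibilities into the universal property of the push-out (as in the proof of Lemma~\ref{justdefined}) yields homomorphisms $\hat\imath_p:Z_p(\mathcal N,\phi|_{\mathcal N})\to Z_p(\M,\phi)$ and $\hat\e_p:Z_p(\M,\phi)\to Z_p(\mathcal N,\phi|_{\mathcal N})$ of left $\M$- (resp. $\mathcal N$-) modules, sitting in commutative ladders over the short exact sequences, with middle maps $\hat\imath_p,\hat\e_p$ restricting to $\imath_p,\e_p$ on the subspace $L^p$ and inducing $\imath_p,\e_p$ on the quotient $L^p$. By the five-lemma-type bookkeeping, $\hat\e_p\circ\hat\imath_p$ is a homomorphism of extensions of $L^p(\mathcal N,\phi|_{\mathcal N})$ by itself which is the identity on both the sub- and the quotient copy; hence it is the identity on $Z_p(\mathcal N,\phi|_{\mathcal N})$, and $Z_p(\mathcal N,\phi|_{\mathcal N})$ is complemented in $Z_p(\M,\phi)$.

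Alternatively — and perhaps more cleanly, avoiding an explicit description of the induced maps on the push-out — one may use Lemma~\ref{crit-po}: the sub-extension $0\to L^p(\mathcal N,\phi|_{\mathcal N})\to Z_p(\mathcal N,\phi|_{\mathcal N})\to L^p(\mathcal N,\phi|_{\mathcal N})\to 0$ can be realised as the ``pull-back–push-out'' of the $\mathcal M$-extension along $\imath_p$ and $\e_p$, and the retraction $\e_p$ of $\imath_p$ provides exactly the extension of the relevant map required to split the ambient push-out. Either way, the argument is essentially formal once the functorial behaviour of the Calder\'on space under morphisms of couples is in hand.

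The main obstacle, I expect, is bookkeeping rather than a genuine difficulty: one must verify carefully that the maps $I$ and $E$ really land in the Calder\'on spaces (boundedness of $\imath$, $\e$, $\e_*$, $\imath_*$, $P$, $P_*$ on the respective endpoint spaces, and preservation of the analyticity/continuity conditions (1)--(5) in the definition of $\mathcal G$) and that they are genuine homomorphisms of one-sided $\M$-modules compatible with $\delta_\theta$ and $\delta'_\theta$; the identity $E\circ I=\mathrm{id}$ then propagates through the push-out essentially by its universal property. A secondary subtlety is that in case (b) $\mathcal N$ carries the \emph{restriction} of $\phi$, which is a positive normal functional but not a state, so one should make sure that Kosaki's construction and diagram~(\ref{assembled}) are used in the non-normalised form already adopted in the excerpt — no new input is needed, but it must be said. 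None of these steps requires a new idea; the content is entirely contained in Lemma~\ref{arranged}, the functoriality of complex interpolation, and the universal property of push-outs.
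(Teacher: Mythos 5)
Your argument follows the paper's proof essentially step for step: the paper likewise induces operators $\kappa:\PO(\mathcal N)\to\PO(\M)$ and $\pi:\PO(\M)\to\PO(\mathcal N)$ by composing functions in the Calder\'on spaces with $\e_*$ and $\imath_*$, invoking the universal property of the push-out square, and then checking that $\pi\circ\kappa$ is the identity. One small caveat: a homomorphism of an extension into itself that is the identity on the sub- and quotient copies need not be the identity on the middle space (it has the form $\mathrm{id}+\jmath h\pi$, hence is only an automorphism), so your ``five-lemma bookkeeping'' step should instead appeal to the uniqueness clause in the push-out's universal property --- though even the automorphism conclusion would already yield complementation.
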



\begin{proof}We write the proof assuming (a). The other case requires only minor modifications that are left to the reader.
Let us begin with the embedding of $\PO(\mathcal N)=Z_p(\mathcal N,\phi|_{\mathcal N})$ into $\PO(\mathcal M)= Z_p(\mathcal M,\phi)$. Consider the diagram
$$
\xymatrixcolsep{1pc}
\xymatrix{
\mathcal{G}(\mathcal{N},\phi|_{\mathcal{N}})_0 \ar[rr]\ar[rd]\ar[dd]_{\delta_\theta'} & & \mathcal{G}(\mathcal{N},\phi|_\cl{N})  \ar[rr]^{\delta_\theta} \ar[rd]_{(\varepsilon_{*})_\circ} \ar[dd] &  & L^p(\mathcal{N}) \ar[rd] \ar@{=}[dd] \\
& \mathcal{G}(\mathcal{M},\phi)_0 \ar[rr] \ar[dd]_<<<<<<<{\delta_\theta'}  & & \mathcal{G}(\mathcal{M},\phi ) \ar[dd] \ar[rr]^{\delta_\theta\quad\quad}&  & L^p(\mathcal{M}) \ar@{=}[dd]\\
L^p(\mathcal N) \ar[rr]\ar[dr] & & \PO(\mathcal N) \ar[rr] \ar@{.>}[dr] & & L^p(\mathcal N) \ar[dr]\\
& L^p(\mathcal M) \ar[rr] & & \PO(\mathcal M) \ar[rr] & & L^p(\mathcal M)
}
$$

Here, $(\varepsilon_{*})_\circ$ sends a given function $f:\mathbb S\to \mathcal N_*$ to the composition $\varepsilon_*\circ f:\mathbb S\to\mathcal N_*\to\M_*$ and the
 mappings from $L^p(\mathcal N)$ to $L^p(\M)$ are all given by $\imath_p$.
 It is not hard to check that this is a commutative diagram. Therefore, we can insert an operator
$\kappa: \PO(\mathcal N)\to \PO(\mathcal M)$ making the resulting diagram commutative because of the universal property of the push-out square
$$
\begin{CD}
\mathcal G(\mathcal N,\phi|_\cl{N})_0 @>>> \mathcal G(\mathcal N,\phi|_\cl{N})\\
@V\delta'_\theta VV @VVV \\
L^p(\mathcal N)@>>> \PO(\mathcal N)
\end{CD}
$$
A similar argument shows the existence of an operator $\pi: \PO(\mathcal M)\to \PO(\mathcal N)$ making commutative the diagram
$$
\xymatrixcolsep{1pc}
\xymatrix{
\mathcal{G}(\mathcal{M},\phi)_0 \ar[rr]\ar[rd]\ar[dd]_{\delta_\theta'} & & \mathcal{G}(\mathcal{M},\phi)  \ar[rr]^{\delta_\theta} \ar[rd]_{(\imath_{*})_\circ} \ar[dd] &  & L^p(\mathcal{M}) \ar[rd] \ar@{=}[dd] \\
& \mathcal{G}(\mathcal{N},\phi|_{\mathcal{N}})_0 \ar[rr] \ar[dd]_<<<<<<<{\delta_\theta'}  & & \mathcal{G}(\mathcal{N},\phi|_{\mathcal N} ) \ar[dd] \ar[rr]^{\delta_\theta\quad\quad}&  & L^p(\mathcal{N}) \ar@{=}[dd]\\
L^p(\mathcal M) \ar[rr]\ar[dr] & & \PO(\mathcal M) \ar[rr] \ar[dr]_\pi & & L^p(\mathcal M) \ar[dr]\\
& L^p(\mathcal N) \ar[rr] & & \PO(\mathcal N) \ar[rr] & & L^p(\mathcal N)
}
$$
The arrows from $L^p(\M)$ to $L^p(\mathcal N)$ are now given by $\varepsilon_p$. Putting together the two preceding
diagrams it is easily seen that $\pi\circ\kappa$ is the identity on $\PO(\mathcal N)$.
\end{proof}

Here is the main result about the twisting of Kosaki's $L^p$. As we shall see later (Section~\ref{change}) $Z_p(\M,\phi)$ doesn't depend on $\phi$ and so the conclusion of the following Theorem holds for any $\phi$.

\begin{theorem}
Let $\M$ be an infinite dimensional von Neumann algebra. There is a faithful normal state $\phi$ for which the lower extension of the push-out diagram
$$
\begin{CD}
\ker\delta_\theta @>>> \mathcal G(\M,\phi) @>\delta_\theta >>L^p(\M,\phi)\\
@V\delta'_\theta VV @VVV @| \\
L^p(\M,\phi)@>>> \PO @>>>L^p(\M,\phi)
\end{CD}
$$
is nontrivial.
\end{theorem}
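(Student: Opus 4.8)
The plan is to reduce to the tracial case, which is settled in Theorem~\ref{k-semi}, by locating inside $\M$ an atomic abelian von Neumann subalgebra $\mathcal A\cong\loo$ that carries a tracial state and is arranged so that there is a $\phi$-invariant normal conditional expectation onto it; then Lemma~\ref{complemented} will transport nontriviality from $\mathcal A$ up to $\M$.

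First I would fix the combinatorial data. Since $\M$ is infinite dimensional, a maximal orthogonal family of nonzero projections of $\M$ is infinite and sums (strongly) to ${\bf 1}_\M$; grouping its members into countably many nonempty blocks we may assume it is a sequence $(e_n)_{n\geq 1}$ with $\sum_n e_n={\bf 1}_\M$. Let $\mathcal A$ be the von Neumann subalgebra generated by the $e_n$, so that $\mathcal A$ is $*$-isomorphic to $\loo$ with the $e_n$ as atoms. Now pick any faithful normal state $\omega$ on $\M$ (which exists since $\M$ is $\sigma$-finite) and define $\e:\M\to\mathcal A$ by
$$
\e(a)=\sum_n \frac{\omega(e_nae_n)}{\omega(e_n)}\, e_n .
$$
It is routine that $\e$ is a normal unital positive $\mathcal A$-bimodular projection onto $\mathcal A$, hence a normal conditional expectation. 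Set $\phi=\omega\circ\e$. Then $\phi$ is a faithful normal state: if $\phi(a^*a)=0$ then $\omega\bigl((ae_n)^*(ae_n)\bigr)=0$, hence $ae_n=0$, for every $n$, so $a=a\sum_n e_n=0$. Moreover $\phi\circ\e=\omega\circ\e\circ\e=\omega\circ\e=\phi$, so $\e$ leaves $\phi$ invariant, and $\phi|_\mathcal A$ is a faithful normal state on the abelian algebra $\mathcal A$, hence a faithful normal tracial state.

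With this data, hypothesis~(a) of Lemma~\ref{arranged} holds for $\mathcal N=\mathcal A$, so Lemma~\ref{complemented} applies and $Z_p(\mathcal A,\phi|_\mathcal A)$ is complemented in $Z_p(\M,\phi)$. More precisely, the proof of Lemma~\ref{complemented} provides an embedding $\kappa:Z_p(\mathcal A,\phi|_\mathcal A)\to Z_p(\M,\phi)$ fitting into a commutative ladder between the two push-out sequences, with the maps $\imath_p:L^p(\mathcal A,\phi|_\mathcal A)\to L^p(\M,\phi)$ of Lemma~\ref{arranged} on the outer columns and $\e_p\circ\imath_p=\mathrm{id}$. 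Consequently, if the lower extension in the displayed diagram for $(\M,\phi)$ split even merely as an extension of Banach spaces, say through a bounded retraction $r$ onto $L^p(\M,\phi)$, then $\e_p\circ r\circ\kappa$ would be a bounded retraction splitting the lower extension for $(\mathcal A,\phi|_\mathcal A)$; so it suffices to show the latter does not split.

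But $\phi|_\mathcal A$ is a finite trace, whence $\mathcal A_*\cong L^1(\mathcal A,\phi|_\mathcal A)$ isometrically and bimodularly, and under this identification Kosaki's couple $(\mathcal A\cdot\phi|_\mathcal A,\mathcal A_*)$ is exactly the couple $(\mathcal A,L^1(\mathcal A,\phi|_\mathcal A))$ used in Section~\ref{kalton-peck}; hence the corresponding Calder\'on spaces and push-outs coincide and $Z_p(\mathcal A,\phi|_\mathcal A)$ is the tracial noncommutative Kalton--Peck space built over $\mathcal A$. Since $\mathcal A\cong\loo$ is infinite dimensional and $1<p<\infty$, Theorem~\ref{k-semi} says this extension is nontrivial, in fact does not split as an extension of Banach spaces. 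Combined with the previous paragraph, the lower extension in the displayed diagram for $(\M,\phi)$ is nontrivial, as required. The one genuinely substantial step is the second paragraph --- manufacturing a state admitting an invariant normal conditional expectation onto a copy of $\loo$, which is handled by the device $\phi=\omega\circ\e$; everything afterwards is the routine identification of the Kosaki and tracial constructions over the abelian algebra together with an appeal to Theorem~\ref{k-semi}.
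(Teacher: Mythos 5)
Your proof is correct, and while it shares the paper's overall architecture (produce a state $\phi$ admitting a $\phi$-invariant normal conditional expectation onto a subalgebra on which $\phi$ is a finite trace, then transfer nontriviality via Lemma~\ref{arranged} and Lemma~\ref{complemented} down to Theorem~\ref{k-semi}), the way you manufacture $\phi$ is genuinely different and considerably more elementary. The paper picks $\phi$ so that its centralizer algebra $\M^\phi$ is infinite dimensional, which forces a case split ($\M$ semifinite versus $\M$ with no type I summand), an appeal to the Haagerup--St\o rmer theorem to get a state with type $\mathrm{II}_1$ centralizer in the hard case, and Takesaki's theorem for the existence of the invariant expectation onto $\M^\phi$. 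You instead fix an atomic abelian copy of $\loo$ inside $\M$, write down the expectation $\e$ by hand, and then \emph{define} $\phi=\omega\circ\e$ so that invariance $\phi\circ\e=\phi$ is automatic; this treats all algebras uniformly and replaces two nontrivial structure theorems by a two-line verification. (Incidentally your $\mathcal A$ sits inside $\M^{\phi}$, so your state also has infinite-dimensional centralizer --- you have simply bypassed the need to say so.) What the paper's route buys in exchange is structural information: a canonical choice of subalgebra (the full centralizer, which can be taken type $\mathrm{II}_1$ in the type $\mathrm{III}$ case), which is of independent interest but not needed for the bare nontriviality statement. Two small points to tidy up: a \emph{maximal} orthogonal family of nonzero projections need not be infinite (the singleton $\{\mathbf 1_\M\}$ is maximal), so you should instead invoke the standard fact that an infinite-dimensional von Neumann algebra contains an infinite orthogonal sequence of nonzero projections and then absorb the residual projection $\mathbf 1_\M-\sum_n f_n$ into the first one; and the identification of the Kosaki extension over $(\mathcal A,\phi|_{\mathcal A})$ with the tracial one of Section 2, which you state correctly, is exactly the identification the paper itself relies on, so you are on equal footing there.
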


\begin{proof}
The idea of the proof is to choose $\phi$ in such a way that its ``centralizer subalgebra''
$$
\M^\phi=\{a\in \M: a\phi=\phi a\}
$$
is infinite dimensional.
After that we proceed as follows. By \cite[Corollary III.4.7.9]{black}, there is a normal conditional expectation $\e:\M\to \M^\phi$ leaving $\phi$ invariant: $\phi=\phi|_{\M^\phi}\circ \e$. Actually $\e$ is unique, by \cite[Corollary II.6.10.8]{black}.

Apply now Lemma~\ref{complemented} to embed $\PO(\M^\phi, \phi)$ as a complemented subspace (in fact as a ``complemented subextension'') of $\PO(\M, \phi)$ and please note that the restriction of $\phi$ to $\M^\phi$ is a (finite) trace by the very definition of $\M^\phi$.

The nonsplitting of $\PO(\M^\phi, \phi)$ is nothing but a particular case of Theorem~\ref{k-semi} as for a finite trace $\tau$ one has $\M\subset L^1(\tau)$ and, after identifying $L^1(\tau)$ with $\M_*$, the inclusion agrees with Kosaki's left method.

In order to find out the required $\phi$, let us decompose $\M=\mathcal N\oplus\mathcal L$, with $\mathcal N$ semifinite and $\mathcal L$ without direct summands of type I (This can be done in several ways: for instance, taking $\mathcal N$ as the semifinite part and $\mathcal L$ as the type III part of $\M$, or taking $\mathcal N$ as the discrete part and $\mathcal L$ as the continuous part, see \cite[Section~III.1.4]{black}.)

By Lemma~\ref{complemented} we have an isomorphism
$\PO(\M,\phi)=\PO(\mathcal N,\phi|_\mathcal N)\oplus \PO(\mathcal L,\phi|_\mathcal L)$ and we can consider the two cases separately.

$\bigstar$
First, assume $\M$ has no direct summand of type I (so that it is either type II or III). Then, if $\psi$ is any faithful normal state on $\M$, there is  a faithful normal state $\phi$ (in the closure of the orbit of $\psi$ under the inner automorphisms of $\M$) whose centralizer
subalgebra $\M^\phi$
is of type II$_1$ (\cite[Theorem 11.1]{haa-sto}) and we are done.

$\bigstar$  Now, suppose $\M$ semifinite and let us see that any $\phi$ works. Let $\tau$ be a (fns) trace on $\M$ and let us identify $\M_*$ with $L^1(\tau)$ so that we may consider $\phi$ as a $\tau$-measurable operator on the ground Hilbert space. If $\phi$ is elementary, let us write it as $\phi=\sum_{i=1}^nt_ie_i$, where the $e_i$ are mutually orthogonal projections in $\M$. Letting $\M_i=e_i\M e_i$ we see that $\oplus_i\M_i$ is an infinite dimensional subalgebra of $\M^\phi$, which is enough.
Otherwise $\phi$ has infinite spectrum and its spectral projections already generate an infinite dimensional subalgebra of $\M^\phi$.
\end{proof}

\subsection{Duality again} We now give a description of the dual of $Z_p(\M,\phi)$ for general $\M$. To this end we consider the right embedding of $\M$ into $\M_*$ given by $a\mapsto \phi a$ which is a homomorphism of right modules and the new couple $(\phi\M,\M_*)$. The former couple using the left embedding is denoted by $(\M\phi,\M_*)$. The right version of Kosaki $L^p$ is
$$
L^p(\M,\phi)^r=[\phi\M,\M_*]_{1/p}= [\M_*,\phi\M]_{1-1/p}.
$$
Let us define $Z_p(\M,\phi)^r$ as the push-out space (actually right module on $\M$) in the ubiquitous diagram
\begin{equation}\label{kosakir}
\begin{CD}
\ker\delta_\theta @>>> \mathcal G(\M,\phi)^r @>\delta_\theta >>L^p(\M,\phi)^r\\
@V\delta'_\theta VV @VVV @| \\
L^p(\M,\phi)^r@>>> \PO @>>>L^p(\M,\phi)^r
\end{CD}
\end{equation}
where $\theta=1/p$ and  $\mathcal G(\M,\phi)^r $ is the Calder\'on space associated to the couple $(\phi\M,\M_*)$.

We want to see that if $p,q\in(1,\infty)$ are conjugate exponents, then the conjugate of $Z_p(\M,\phi)^\ell$ (our former  $Z_p(\M,\phi)$) is well isomorphic to  $Z_q(\M,\phi)^r$.

Consider the couples $(\M\phi,\M_*)$ and $(\M_*,\phi\M)$ (not $(\phi\M,\M_*)$!). Then
$$\Delta^\ell= \Delta(\M\phi,\M_*)=\M\phi \quad\quad\text{and}\quad\quad \Delta^r= \Delta(\M_*,\phi\M)=\phi\M.
$$
Both $\M\phi$ and $\phi\M$ are dense in $\M_*$ since $\phi$ is faithful.
Define a bilinear form $\beta:\Delta^\ell\times \Delta^r\to\C$ by $\beta(a\phi, \phi b)= \phi(ba)$.
The key point in that
$$
\beta(a\phi, \phi b)=\langle a\phi, b\rangle = \langle a, \phi b\rangle,
$$
where the brackets refer to the dual pairing between $\M_*$ and $\M$. (Notice, moreover, that $\beta$ is balanced in the sense that $\beta(cf,g)=\beta(f,gc)$ for $f\in\Delta^\ell, g\in\Delta^r$ and $c\in\M$.)

Then $\beta$ is bounded both at $\theta=0$ and $\theta= 1$. Indeed, for $\theta=0$ one has
$$
|\phi(ba)|\leq \|a\|_\M\|\phi b\|_{\M_*}.
$$
Similarly, when $\theta=1$,
$$
|\phi(ba)|\leq \|a\phi\|_{\M_*}\|b\|_{\M}.
$$
By bilinear interpolation \cite[Theorem 4.4.1]{bergh} $\beta$ extends to a bounded bilinear form on $L^p(\M,\phi)^\ell\times  L^q(\M,\phi)^r=[\M\phi,\M_*]^\ell_\theta\times [\M_*,\phi\M]^r_\theta$ which provides the dual pairing between $L^p(\M,\phi)^\ell$ and $ L^q(\M,\phi)^r$ (see \cite{kosaki} or \cite{raynaud}). Let us call $\beta$ to that extension.

For $1<p<\infty$, let $\Omega_p^\ell:L^p(\M,\phi)\to \M_*$ be the derivation associated to the identity $[\M\phi,\M_*]^\ell_{1/p}=L^p(\M,\phi)$ and
$\Omega_p^r:L^p(\M,\phi)^r\to \M_*$ that associated to $[\phi\M,\M_*]^r_{1/p}=L^p(\M,\phi)^r$. Note that if $\theta=1/p$, then the derivation associated to $[\M_*,\phi\M]^r_{\theta}=L^q(\M,\phi)^r$ is just $-\Omega_q^r$. Proposition 1.3 in \cite{fan} yields
$$
|\beta(\Omega_p^\ell(f),g)-\beta(f,\Omega_q^r(g))|\leq \frac{\pi}{\sin (\pi\theta)}\|f\|_{p}\|g\|_q
$$
at least when $f\in \M\phi$ and $g\in\phi\M$. The following result is implicit in \cite{rochberg-weiss}.

\begin{theorem} Given conjugate exponents $p,q\in(1,\infty)$, the dual of $$Z_p(\M,\phi)^\ell= L^p(\M,\phi)^\ell\oplus_{\Omega_p^\ell}L^p(\M,\phi)^\ell$$ is isomorphic to $$-Z_q(\M,\phi)^r= L^q(\M,\phi)^r\oplus_{-\Omega_q^r}L^q(\M,\phi)^r.$$ More precisely, there is an isomorphism of right Banach modules over $\M$ making commutative the following diagram
\begin{equation}\label{dual}
\begin{CD}
(L^p(\M,\phi)^\ell)^*@>\pi^*>> (Z_p(\M,\phi)^\ell)^* @>\imath^*>>(L^p(\M,\phi)^\ell)^*\\
@| @AA u A @| \\
L^q(\M,\phi)^r@>>> L^q(\M,\phi)^r\oplus_{-\Omega_q^r}L^q(\M,\phi)^r @>>>L^q(\M,\phi)^r
\end{CD}
\end{equation}
\end{theorem}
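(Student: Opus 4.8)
The plan is to realize the isomorphism $u$ as the adjoint of an explicit bilinear pairing and then to locate it inside diagram~(\ref{dual}) and invoke the five-lemma. Concretely, for $(x,y)\in -Z_q(\M,\phi)^r$ and $(v,w)\in Z_p(\M,\phi)^\ell$ one sets
$$
\langle (x,y),(v,w)\rangle=\beta(w,x)+\beta(v,y),
$$
the definition being literal when the second coordinates $w,y$ lie in the dense subspaces $\M\phi\subset L^p(\M,\phi)^\ell$ and $\phi\M\subset L^q(\M,\phi)^r$ (so that $\Omega_p^\ell(w)\in L^p(\M,\phi)^\ell$ and $\Omega_q^r(y)\in L^q(\M,\phi)^r$ and every summand pairs two elements of conjugate $L$-spaces), and one lets $u(x,y)$ be the functional $(v,w)\mapsto\langle(x,y),(v,w)\rangle$. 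The four things to verify are: $u$ is bounded (hence extends by density); it makes diagram~(\ref{dual}) commute; consequently it is an isomorphism; and it is a homomorphism of right $\M$-modules.

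The heart of the matter is boundedness, which is attacked exactly as the continuity estimate in the proof of Theorem~\ref{th:duality}. Put $r=v-\Omega_p^\ell(w)\in L^p(\M,\phi)^\ell$ and $s=x+\Omega_q^r(y)\in L^q(\M,\phi)^r$, so that $\|r\|_p,\|w\|_p\le\|(v,w)\|_{\Omega_p^\ell}$ and $\|s\|_q,\|y\|_q\le\|(x,y)\|_{-\Omega_q^r}$, and decompose
$$
\langle(x,y),(v,w)\rangle=\beta(w,s)+\beta(r,y)+\bigl[\beta(\Omega_p^\ell(w),y)-\beta(w,\Omega_q^r(y))\bigr].
$$
Since $\beta$ is bounded on $L^p(\M,\phi)^\ell\times L^q(\M,\phi)^r$, the first two terms are at most $\|w\|_p\|s\|_q$ and $\|r\|_p\|y\|_q$, hence each at most $\|(v,w)\|_{\Omega_p^\ell}\|(x,y)\|_{-\Omega_q^r}$; and for $w\in\M\phi$, $y\in\phi\M$ the bracketed term is at most $\tfrac{\pi}{\sin(\pi/p)}\|w\|_p\|y\|_q$ by Fan's inequality quoted above. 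Thus $|\langle(x,y),(v,w)\rangle|\le C\,\|(x,y)\|\,\|(v,w)\|$ whenever $w\in\M\phi$ and $y\in\phi\M$. One then passes to the general case by density: if $w_n\to w$ in $L^p(\M,\phi)^\ell$ with $w_n\in\M\phi$, the pair $(v+\Omega_p^\ell(w_n-w),w_n)$ lies in $Z_p(\M,\phi)^\ell$ and converges to $(v,w)$ there, so $\{(v,w):w\in\M\phi\}$ is dense in $Z_p(\M,\phi)^\ell$, and symmetrically $\{(x,y):y\in\phi\M\}$ is dense in $-Z_q(\M,\phi)^r$; this lets $u$ extend to a bounded operator $-Z_q(\M,\phi)^r\to(Z_p(\M,\phi)^\ell)^*$.

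For the rest, identify $(L^p(\M,\phi)^\ell)^*$ with $L^q(\M,\phi)^r$ through $\beta$ (Kosaki; see \cite{kosaki,raynaud}). Then $u(g,0)$ is the functional $(v,w)\mapsto\beta(w,g)$, which is precisely $\pi^*g$, while $\imath^*u(x,y)$ is $v\mapsto\beta(v,y)$, that is, $y$ itself; these are exactly the two commutativities demanded by~(\ref{dual}). In that diagram the top row is the Banach-space adjoint of the extension induced by $\Omega_p^\ell$, hence short exact; the bottom row is the defining sequence of the twisted sum $-Z_q(\M,\phi)^r$, hence short exact; and the outer verticals are isomorphisms. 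A bounded linear $u$ making such a diagram commute is forced by the five-lemma to be bijective, and the open mapping theorem then makes it a topological isomorphism. Finally, $u$ is a homomorphism of right $\M$-modules because the form $\beta$ is balanced, $\beta(cf,g)=\beta(f,gc)$ — an identity which holds on $\Delta^\ell\times\Delta^r$ and persists after interpolation by continuity — which gives $\langle(x,y)c,(v,w)\rangle=\langle(x,y),c(v,w)\rangle$, i.e.\ $u((x,y)c)=u(x,y)\cdot c$ for the dual (right) action on $(Z_p(\M,\phi)^\ell)^*$.

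The step I expect to fight with is the boundedness, and inside it the cross term $\beta(\Omega_p^\ell(w),y)-\beta(w,\Omega_q^r(y))$: here, unlike in Theorem~\ref{th:duality}, there is no closed formula for the derivations and no $\sigma$-elementary operators to approximate with, so the estimate must be imported from interpolation (Fan's inequality) and then upgraded by the density argument, and one must check that $\Omega_p^\ell$ and $\Omega_q^r$ carry the respective $\Delta$'s into the interpolation spaces so that each term is meaningful. A cleaner route worth keeping in mind — and which avoids any density step — is to use the Rochberg--Weiss descriptions $Z_p(\M,\phi)^\ell=\{(g'(\theta),g(\theta)):g\in\mathcal G(\M\phi,\M_*)\}$ and $-Z_q(\M,\phi)^r=\{(h'(\theta),h(\theta)):h\in\mathcal G(\M_*,\phi\M)\}$ with $\theta=1/p$: for such $g,h$ the function $z\mapsto\beta(g(z),h(z))$ is analytic on $\mathbb{S}$ and bounded there by $\|g\|\,\|h\|$, so a Cauchy estimate at $\theta$ (as in the proof of Lemma~\ref{mechanism}) bounds $|\beta(g'(\theta),h(\theta))+\beta(g(\theta),h'(\theta))|$, and taking infima over representing functions yields the inequality for $u$ outright. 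On that route the only point needing care is to justify that differentiation at $\theta$ commutes with the bounded bilinear form $\beta$.
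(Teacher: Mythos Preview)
Your proposal is correct and follows the paper's proof essentially line for line: the same pairing, the same decomposition isolating the cross term $\beta(\Omega_p^\ell(w),y)-\beta(w,\Omega_q^r(y))$, the same appeal to Fan's inequality for that term, the same density extension, and the same five-lemma/open-mapping finish together with the balancedness check for the module structure. Your density argument is spelled out a bit more explicitly than the paper's, and the alternative Rochberg--Weiss/Cauchy-estimate route you sketch at the end is a legitimate variant (indeed the paper notes the result is implicit in \cite{rochberg-weiss}), but your main line is the paper's own argument.
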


\begin{proof}
Put
\begin{equation}\label{u}
(u(g',g))(f',f)=\beta(f,g')+\beta(f',g)\quad\quad(g\in \Delta^r, f\in \Delta^\ell).
\end{equation}
We have
$$
\begin{aligned}
|\beta(f,g')+\beta(f',g)| &= |\beta(f,g'+\Omega_q^r(g))- \beta(f,\Omega_q^r(g))+\beta(f'-\Omega_p^\ell(f),g)+ \beta(\Omega_p^\ell(f),g)| \\
&\leq \| f\|_p\|g'+\Omega_q^r(g)\|_q+  \|f'-\Omega_p^\ell(f)\|_p\|g\|_q+\frac{\pi}{\sin (\pi/p)}\|f\|_{p}\|g\|_q\\
&\leq \frac{\pi}{\sin (\pi/p)}\|(g',g)\|_{-\Omega_q^r}\|(f',f)\|_{\Omega_p^\ell}.
\end{aligned}
$$
As $\Delta^\ell$ is dense in $L^p(\M,\phi)^\ell$, we see that $L^p(\M,\phi)^\ell\oplus_{\Omega_p^\ell}\Delta^\ell$ is dense in $Z_p(\M,\phi)^\ell$ and so (\ref{u}) shows that $u(g',g)$ acts, as a bounded linear functional on $Z_p(\M,\phi)^\ell$, with
$$\|u(g',g): Z_p(\M,\phi)^\ell\to\mathbb C\|\leq M\|(g',g)\|_{-\Omega_q^r},$$
at least when $g$ is in $\Delta^r$. This defines an operator making the following diagram commute:
\begin{equation}\label{dual2}
\begin{CD}
(L^p(\M,\phi)^\ell)^*@>\pi^*>> (Z_p(\M,\phi)^\ell)^* @>\imath^*>>(L^p(\M,\phi)^\ell)^*\\
@| @AA u A @| \\
L^q(\M,\phi)^r@>>> L^q(\M,\phi)^r\oplus_{-\Omega_q^r}\Delta^r @>>>\Delta^r
\end{CD}
\end{equation}
and where $\Delta^r$ is treated as a submodule of $L^q(\M,\phi)^r$. By density $u$ extends to an operator that we still call $u$ fitting in (\ref{dual}). The five-lemma and the open mapping theorem guarantee that $u$ is a linear homeomorphism. It remains to check it is also a homomorphism of right $\M$-modules. But for $g\in\Delta^r$ and $f\in\Delta^\ell$ one has
$$
\begin{aligned}
u((g',g)a)(f',f)&= (u(g'a,ga))(f',f)=\beta(f,g'a)+\beta(f',ga)\\
&=\beta(af,g')+\beta(af',g)=u(g',g)(af',af)=(u(g',g)a)(f',f).
\end{aligned}
$$
This completes the proof.
\end{proof}

\subsection{Change of state}\label{change}
In this Section we prove the extension $L^p(\M,\phi)\to Z_p(\M,\phi)\to L^p(\M,\phi)$ is  essentially independent on the reference state $\phi$ in the following precise sense.

\begin{proposition}
Let $\phi_0$ and $\phi_1$ be faithful normal states on $\M$ and $p\in(1,\infty)$. Then there is a commutative diagram
$$
\begin{CD}
L^p(\M,\phi_0)@>>> Z_p(\M,\phi_0)@>>> L^p(\M,\phi_0)\\
 @V\alpha VV @VVV @VV\alpha V \\
L^p(\M,\phi_1)@>>> Z_p(\M,\phi_1)@>>> L^p(\M,\phi_1)
\end{CD}
$$
in which the vertical arrows are isomorphisms of left $\M$-modules.
\end{proposition}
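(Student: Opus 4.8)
The plan is to build, at the level of Calder\'on spaces, a single isomorphism $V\colon \mathcal G(\M,\phi_0)\to\mathcal G(\M,\phi_1)$ that transports the whole push-out square defining $Z_p(\M,\phi_0)$ onto the one defining $Z_p(\M,\phi_1)$; the universal property of the push-out then delivers the middle vertical arrow, and running the same construction with $\phi_0$ and $\phi_1$ interchanged shows that it is an isomorphism. All three vertical arrows will be left $\M$-linear because $V$ will commute with the left action of $\M$.

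The map $V$ is nothing but the rotation underlying Kosaki's proof that $[\M\phi,\M_*]_\theta$ does not depend on the reference state (see \cite{kosaki, raynaud, pisier-handbook}). Let $u_t=(D\phi_1:D\phi_0)_t$ be the Connes cocycle, a continuous one-parameter family of unitaries of $\M$, and let $\{d(z):z\in\mathbb S\}$ be its analytic continuation: a family of (in general unbounded) operators affiliated with $\M$, analytic on $\mathbb S^\circ$ in the appropriate weak sense, with $d(it)$ and $d(1+it)$ unitary and such that right multiplication by $d(it)$ carries $\M\phi_0$ isometrically onto $\M\phi_1$, while right multiplication by $d(1+it)$ is an isometry of $\M_*$ onto itself. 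Put
$$
(Vg)(z)=g(z)\,d(z)\qquad (g\in\mathcal G(\M,\phi_0),\ z\in\mathbb S).
$$
The boundary properties of $d$ show that $Vg\in\mathcal G(\M,\phi_1)$ with $\|Vg\|_{\mathcal G(\M,\phi_1)}=\|g\|_{\mathcal G(\M,\phi_0)}$, and the analogous family for the pair $(\phi_1,\phi_0)$ provides the inverse of $V$; since each $d(z)$ is affiliated with $\M$ one has $V(bg)=b\,(Vg)$ for $b\in\M$, so $V$ is an isometric isomorphism of left $\M$-modules.

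It remains to check compatibility with the arrows in (\ref{kosaki}). Write $\theta=1/p$ and let $\delta_\theta,\delta'_\theta$ be the maps attached to $(\M\phi_0,\M_*)$ and $\bar\delta_\theta,\bar\delta'_\theta$ those attached to $(\M\phi_1,\M_*)$. Let $\alpha\colon L^p(\M,\phi_0)\to L^p(\M,\phi_1)$ be the operator induced by $V$ on the interpolation spaces, i.e.\ $\alpha(g(\theta))=(Vg)(\theta)=g(\theta)\,d(\theta)$; this is exactly Kosaki's canonical identification of the two $L^p$ spaces, an isometric isomorphism of left $\M$-modules. Plainly $V(\ker\delta_\theta)\subseteq\ker\bar\delta_\theta$ and $\bar\delta_\theta\circ V=\alpha\circ\delta_\theta$. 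Moreover, if $g\in\ker\delta_\theta$ the Leibniz rule gives
$$
(Vg)'(\theta)=g'(\theta)\,d(\theta)+g(\theta)\,d'(\theta)=g'(\theta)\,d(\theta),
$$
because $g(\theta)=0$; hence $\bar\delta'_\theta\circ V=\alpha\circ\delta'_\theta$ on $\ker\delta_\theta$. Consequently the pair $(\alpha,V)$ maps the push-out square defining $Z_p(\M,\phi_0)$ into that defining $Z_p(\M,\phi_1)$, and the universal property of push-outs yields a unique homomorphism $\gamma\colon Z_p(\M,\phi_0)\to Z_p(\M,\phi_1)$ of left $\M$-modules which, together with $\alpha$ on both ends, makes the diagram of the proposition commute. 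Interchanging $\phi_0$ and $\phi_1$ gives a homomorphism in the opposite direction, and by the uniqueness clause in the universal property the two compositions are the respective identities, so all three vertical arrows are isomorphisms.

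The only real obstacle is the construction of the analytic family $\{d(z)\}$ with the stated isometric boundary behaviour: this is precisely the Tomita--Takesaki / Connes-cocycle input behind Kosaki's weight-independence theorem, and we simply quote it from \cite{kosaki, raynaud, pisier-handbook}. Once it is in hand, the genuinely new observation is elementary: the Leibniz correction term $g(\theta)\,d'(\theta)$ dies on $\ker\delta_\theta$, so the very rotation that identifies the $L^p$ spaces automatically identifies the twisted sums. (In the language of derivations this says that $\Omega_p^{\phi_0}$ and $\Omega_p^{\phi_1}$ are intertwined by $\alpha$ up to the linear map $f\mapsto f\,d'(\theta)$; we prefer the push-out formulation above, which sidesteps the domain questions raised by multiplying elements of $\M_*$ by the unbounded operator $d'(\theta)$.)
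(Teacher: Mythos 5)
Your overall strategy --- rotate by the analytic continuation of the Connes cocycle and observe that the Leibniz correction term $g(\theta)d'(\theta)$ dies on $\ker\delta_\theta$ --- is the same as the paper's, and that Leibniz observation is indeed the heart of the matter. But there is a genuine gap in the way you set it up. You posit an analytic family $\{d(z)\}$ of operators affiliated with $\M$, with $d(it)$ and $d(1+it)$ unitary, such that $(Vg)(z)=g(z)\,d(z)$ makes sense for \emph{every} $g\in\mathcal G(\M,\phi_0)$. No such family exists in general: the cocycle $t\mapsto(D\phi_0;D\phi_1)_t$ does not extend analytically to the strip as an operator-valued map (this would essentially require a domination $\phi_0\leq C\phi_1$), and on the line $\Re z=1$ the relevant boundary object is not a unitary. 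What always exists is the $\M_*$-valued continuation $z\mapsto\overline{(D\phi_0;D\phi_1)_{(-iz)}\phi_1}$ of $t\mapsto(D\phi_0;D\phi_1)_t\,\phi_1$, whose boundary value at $-i+t$ is $\phi_0(D\phi_0;D\phi_1)_t$. Consequently the rotation can only be written down on the dense subspace of functions of the form $F(z)=f(z)\phi_0$ with $f$ an $\M$-valued analytic function, by \emph{replacing the reference state}: $(IF)(z)=f(z)\,\overline{(D\phi_0;D\phi_1)_{(-iz)}\phi_1}$; one then verifies the boundary isometry identities and extends by density. Your formula $g(z)d(z)$ for general $g$ has exactly the domain problem you yourself flag for $d'(\theta)$, only now at every interior point of $\mathbb S$.

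Second, bijectivity of $\alpha$ does not follow from the push-out uniqueness clause as you claim, because the map one would need to invert at the Calder\'on-space level is not surjective (the paper notes this explicitly). Proving that the two induced maps are mutually inverse requires checking that the two continuations compose to the identity, and since the $d$'s are not honest operators this must be done by uniqueness of analytic continuation on a dense set of ``analytic'' elements $a\phi_0$, using $(D\phi_0;D\phi_1)_t=((D\phi_1;D\phi_0)_t)^{-1}$. With these two repairs your argument becomes essentially the paper's proof.
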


\begin{proof} The proof is based on an idea explained and discarded by Kosaki in \cite[p.~71]{kosaki}. We remark that our proof provides a very natural isometry between $L^p$ spaces based on two different states.

It will be convenient to consider two more spaces of analytic functions. The first one is the obvious adaptation of the space $\mathcal H$ appearing in Section \ref{kalton-peck} to the nontracial setting. So, given a faithful state $\phi\in\M_*$, we consider the couple $(\M\phi,\M_*)$, and the space $\mathcal H=\mathcal H(\M,\phi)$ of bounded functions $H:\S\to\M_*$ such that:
\begin{itemize}
\item[(1)] $H$ is continuous on $\S$ and analytic on $\S^\circ$ with respect to $\sigma(\M_*,\M)$.
\item[(2)] $H(it)\in\M\phi$ for every $t\in\R$. The function $t\in\R\mapsto H(it)\in\M\phi$ is $\M$-bounded and $\sigma(\M,\M_*)$-continuous.
\item[(3)] The function $t\in\R\mapsto H(1+it)\in\M_*$ is continuous in the norm of $\M_*$.
\end{itemize}
As one may expect we furnish $\mathcal H$ with the norm $\|H\|_{\mathcal H}=\sup_t\left(\|H(it)\|_\M,\|H(1+it)\|_{\M_*}\right)$.
Of course, $\mathcal H$ is larger than $\mathcal G$. The second space we shall denote by $\mathcal F=\mathcal F(\M,\phi)$ is the space of those $f\in\mathcal G(\M,\phi)$ satisfying the additional condition that $f(it)\to 0$ in $\M=\M\phi$ as $|t|\to\infty$ and  $f(1+it)\to 0$ in $\M_*$ as $|t|\to\infty$.
Moreover the complex method of interpolation, applied to the couple $(\M,\M_*)$, leads to the same scale using $\mathcal F,\mathcal G$ or $\mathcal H$:
$$
[\M,\M_*]^{\mathcal F}_\theta =  [\M,\M_*]^{\mathcal G}_\theta = [\M,\M_*]^{\mathcal H}_\theta =L^p(\M,\phi)\quad\quad(\theta=1/p)
$$
with identical norms. This is very easy to check, once we know that $L^p(\M,\phi)$ is reflexive and agrees with the dual of the right space $L^q(\M,\phi)^r$, where $q$ is the conjugate exponent of $p$. As Lemma~\ref{mechanism} is true (with the same proof) replacing $\mathcal G$ by  $\mathcal F$ or by $\mathcal H$ we see that the lower extension in Diagram~\ref{kosaki} does not vary after replacing $\mathcal G$ by  $\mathcal F$ or by $\mathcal H$.

We shall use the following notations:
\begin{align*}
\mathcal F_0(\M,\phi)&=\{F\in \mathcal F(\M,\phi): F(\theta)=0\},\\
\mathcal F_1(\M,\phi)&=\{F\in \mathcal F(\M,\phi): F(\theta)=F'(\theta)=0\}
\end{align*}
and similarly for $\mathcal G$ and $\mathcal H$. As we mentioned after Lemma~\ref{justdefined} one has isomorphisms
$$
Z_p(\M,\phi)=\frac{ \mathcal F(\M,\phi)}{  \mathcal F_1(\M,\phi)}= \frac{ \mathcal G(\M,\phi)}{  \mathcal G_1(\M,\phi)}= \frac{ \mathcal H(\M,\phi)}{  \mathcal H_1(\M,\phi)}.
$$
It is important to realize how these quotient spaces arise as self-extensions of $L^p=L^p(\M,\phi)$. We describe the details for the smaller space $\mathcal F$; replacing it by $\mathcal G$ or $\mathcal H$ makes no difference.

Recall that we have $\mathcal F_1\subset\mathcal F_0\subset \mathcal F$ and therefore an exact sequence
$$
\begin{CD}
0@>>> \mathcal F_0/\mathcal F_1@>\jmath>> \mathcal F/\mathcal F_1@>\varpi>> \mathcal F/\mathcal F_0@>>> 0
\end{CD}
$$
where $\jmath$ and $\varpi$ are the obvious maps. This becomes a self-extension of $L^p$ after identifying $\mathcal F/\mathcal F_0$ with $L^p$ through the (factorization) of the evaluation map $\delta_\theta:\mathcal F\to L^p$ at $\theta=1/p$, while the identification of  $ \mathcal F_0/\mathcal F_1$ with $L^p$ is provided by the (factorization) of the derivative $\delta_\theta':\mathcal F_0\to L^p$ (at $\theta=1/p$) which is an isomorphism of left modules over $\M$.

We conclude these prolegomena with the following observation. Let $\mathcal E(\M,\phi)$ denote the subspace of those $F\in  \mathcal F(\M,\phi)$ having the form $F(z)=f(z)\phi$, where $f:\mathbb S\to\M$ is continuous and analytic on the interior. It turns out that $\mathcal E(\M,\phi)$ is dense in  $\mathcal F(\M,\phi)$. Indeed, the set of functions having the form $F(z)=f(z)\phi$, with
$$
f(z)=\exp(\lambda z^2)\sum_{i=1}^n\exp(\lambda_i z)a_i\quad\quad(\lambda,\lambda_i\in\R, a_i\in\M)
$$
is already a dense subspace of $\mathcal F(\M,\phi)$. See \cite[Lemma 4.2.3]{bergh}.

Let $\vp:\S\to\mathbb D$ be the function given by (\ref{thatgiven}). Replacing $\mathcal G$ by $\mathcal F$ everywhere in the proof of Lemma~\ref{mechanism} we see that $\mathcal F_0=\vp\mathcal F$ in the sense that the multiplication operator $f\mapsto \vp f$ is an isomorphism between $\mathcal F$ and $\mathcal F_0$. Similarly,  $f\mapsto \vp^2 f$ is an isomorphism between $\mathcal F$ and $\mathcal F_1$. It follows that $\mathcal E\cap \mathcal F_0$ and $\mathcal E\cap \mathcal F_1$ are dense in $\mathcal F_0$ and in $\mathcal F_1$, respectively.

Now we need a bit of (relative) modular theory for which we refer the reader to \cite{kosaki} or \cite{raynaud}. We fix two faithful states $\phi_0,\phi_1\in\M_*$ and we consider the Connes-Radon-Nikod\'ym cocycle of $\phi_0$ relative to $\phi_1$:
$$
(D\phi_0;D\phi_1)_t=\Delta^{it}_{\phi_0\phi_1} \Delta^{-it}_{\phi_0}\quad\quad(t\in\R).
$$
As it happens, $t\mapsto (D\phi_0;D\phi_1)_t$ is a strongly continuous path of unitaries in $\M$ and so
\begin{equation}\label{path}
 t\longmapsto (D\phi_0;D\phi_1)_t\:\phi_1
\end{equation}
defines a continuous function from $\R$ to $\M_*$. Now the point is that (\ref{path}) extends to a function from the horizontal strip $-i\mathbb S=\{z\in\mathbb C: -1\leq \Im(z)\leq 0\}$ to $\M_*$ we may denote by $\overline{(D\phi_0;D\phi_1)_{(\cdot)}\phi_1}$ having the following properties:
\begin{itemize}
\item[(a)] For each $x\in\M$, the function $z\mapsto \langle \overline{(D\phi_0;D\phi_1)_{(z)}\phi_1}, x \rangle$ is continuous on $-i\mathbb S$ and analytic on $i\S^\circ$.
\item[(b)] $\overline{(D\phi_0;D\phi_1)_{(-i+t)}\phi_1}=\phi_0(D\phi_0;D\phi_1)_t$ for every real $t$.
\end{itemize}

We are going to define an isometric embedding of modules $I:\mathcal F(\M,\phi_0)\to \mathcal H(\M,\phi_1)$.
First, for $F\in\mathcal E(\M,\phi_0)$, we put
\begin{equation}\label{IF}
(IF)(z)=f(z) \overline{(D\phi_0;D\phi_1)_{(-iz)}\phi_1}\quad\quad(F(z)=f(z)\phi_0,z\in\S).
\end{equation}
We observe that for such an $F$ one has $\|F\|_\mathcal F=\max\{\|f(it)\|_\M, \|f(1+it)\phi_0\|_{\M_*}: t\in\R\}$.

Let us check that $IF\in\mathcal H(\M,\phi_1)$. That $IF$ satisfies (1) is obvious from (a).
Regarding the values of $IF$ on the boundary of $\mathbb S$ we have for real $t$:
\begin{equation}\label{it}
(IF)(it)=f(it)(D\phi_0;D\phi_1)_{t}\:\phi_1
\end{equation}
which certainly falls in $\M\phi_1$ since $(D\phi_0;D\phi_1)_{t}$ is unitary and, besides, $\|f(it)(D\phi_0;D\phi_1)_{t}\|_\M=\|f(it)\|_\M$.
Moreover, the function $t\in\R\mapsto f(it)(D\phi_0;D\phi_1)_{t}\in\M$ is $\sigma(\M,\M_*)$ continuous since  $t\in\R\mapsto f(it)\in\M$ is continuous for the norm and  $t\in\R\mapsto (D\phi_0;D\phi_1)_{t}\in\M$ is strongly (hence $\sigma(\M,\M_*)$) continuous. So (2) holds as well.

On the other hand,
$$
(IF)(1+it)=f(1+it)\overline{(D\phi_0;D\phi_1)_{(-i+t)}\phi_1}= f(1+it)\phi_0(D\phi_0;D\phi_1)_{t},
$$
so $\|(IF)(1+it)\|_{\M_*}= \| f(1+it)\phi_0(D\phi_0;D\phi_1)_{t}\|_{\M_*}=  \| f(1+it)\phi_0\|_{\M_*}$ and $
(IF)(1+it)$ is continuous in $t$ for the norm topology of $\M_*$. Finally, that $IF$ is $\M_*$ bounded on the whole $\S$ now follows by interpolation, using (\ref{it}). Hence $IF$ belongs to $\mathcal H(\M,\phi_1)$ and, moreover,
 the norm of $IF$ in $\mathcal H(\M,\phi_1)$ and the norm of $F$ in $\mathcal F(\M,\phi_0)$ coincide.

By density, $I$ extends to an isometric homomorphism of left $\M$-modules from $\mathcal F(\M,\phi_0)$ into $\mathcal H(\M,\phi_1)$ that we call again $I$.

Now we observe that $I$ maps $\mathcal F_0(\M,\phi_0)$ into $\mathcal H_0(\M,\phi_1)$. Indeed, it is obvious from (\ref{IF}) that
$IF$ vanishes at $\theta$ if $F\in\mathcal E$ vanishes at $\theta$ and for arbitrary $F$ the result follows by a density argument, taking into account that
$\mathcal H_0(\M,\phi_1)$ is closed in $\mathcal H(\M,\phi_1)$.
In particular,  $I$ induces a contractive homomorphism from $L^p(\M,\phi_0)$ to $L^p(\M,\phi_1)$.

Similarly, $I$ maps $\mathcal F_1(\M,\phi_0)$ into $\mathcal H_1(\M,\phi_1)$.
Indeed, for $F\in\mathcal E(\M,\phi_0)$ one has
$$
(IF)'(z)=f'(z) \overline{(D\phi_0;D\phi_1)_{(-iz)}\phi_1}+ f(z)\frac{d}{dz} \overline{(D\phi_0;D\phi_1)_{(-iz)}\phi_1}\quad\quad(F(z)=f(z)\phi_0).
$$
Thus, if $F(\theta)=F'(\theta)=0$ then $f(\theta)=f'(\theta)=0$ and therefore $(IF)(\theta)=(IF)'(\theta)=0$ and we proceed as before for general $F$.

Therefore we have a commutative diagram
$$
\xymatrixcolsep{1pc}
\xymatrix{
\mathcal{F}_0/\mathcal F_1 \ar[rr]\ar[rd]\ar[dd]_{\delta_\theta'} & & \mathcal{F}(\phi_0)/\mathcal F_1 \ar[rr] \ar[rd] \ar[dd]_<<<<<<<{(\delta_\theta',\delta_\theta)} &  & \mathcal{F}/\mathcal F_1 \ar[rd] \ar@{=}[dd]_<<<<<<<{\delta_\theta} \\
& \mathcal{H}_0/\mathcal H_1 \ar[rr] \ar[dd]_<<<<<<<{\delta_\theta'}  & & \mathcal{H}(\phi_1)/\mathcal H_1 \ar[dd]^<<<<<<<{(\delta_\theta',\delta_\theta)} \ar[rr]&  &\mathcal H/\mathcal H_0 \ar@{=}[dd]^{\delta_\theta} \\
L^p(\mathcal M,\phi_0) \ar[rr]\ar[dr]_\beta & & Z_p(\mathcal M,\phi_0) \ar[rr] \ar[dr]_\gamma & & L^p(\mathcal M,\phi_0) \ar[dr]_\alpha\\
& L^p(\mathcal M,\phi_1) \ar[rr] & & Z_p(\mathcal M,\phi_1)\ar[rr] & & L^p(\mathcal M,\phi_1)
}
$$
The arrows in the preceding Diagram can be described as follows. First, all arrows in the upper face going from left to right are the obvious ones.
All arrows in the upper face going from spaces based on $\phi_0$ to spaces based on $\phi_1$ are induced by $I$.

All vertical arrows are given by (factorization of) evaluations at $\theta=1/p$, as indicated in the diagram. They are all isomorphisms of left modules over $\M$. Thus, for instance $(\delta_\theta', \delta_\theta): \mathcal H(\phi_1)/\mathcal H_1\to Z_p(\mathcal M,\phi_1)$ takes (the class of) $H\in\mathcal H(\M,\phi_1)$ into the pair $(H'(\theta),H(\theta))\in  Z_p(\mathcal M,\phi_1)$ and so on.

The arrows lying in the bottom face are mere ``shadows'' of the corresponding arrows in the top face. It is really easy to see that  all arrows in the bottom face going from left to right act as expected. Let us identify the arrows of the bottom  face going from objects based on $\phi_0$ to objects based on $\phi_1$. We begin with $\alpha$. Suppose $x\in L^p(\phi_0)$ has the form $x=a\phi_0$, with $a\in\M$. Let $\e:\S\to\C$ be an analytic function such that $\e(\theta)= 1$ and $\e(\infty)=0$. Letting  $F(z)=\e(z)a\phi_0$ we have
$
(IF)(z)= \e(z)a \overline{(D\phi_0;D\phi_1)_{(-iz)} \phi_1}
$
 and so $\alpha(a\phi_0)=(IF)(\theta)=a \overline{(D\phi_0;D\phi_1)_{(-i\theta)} \phi_1}$.

In order to identify $\beta$ we take again $x=a\phi_0$ and we ``jump'' to $\mathcal F_0$ taking
$F(z)=(\vp'(\theta))^{-1}\e(z)\vp(z)a\phi_0$, where $\vp$ is the function defined by (\ref{thatgiven}). Note that
$$
\vp'(\theta)=\frac{\pi}{2}\frac{\exp(i\pi\theta)}{\sin(\pi\theta)}\neq 0.
$$
One has
$$
F'(\theta)=\frac{(\e\vp)'(\theta)}{\vp'(\theta)}a\phi_0=a\phi_0.
$$
Hence $\beta(a\phi_0)=(IF)'(\theta)$, where $(IF)(z)=(\vp'(\theta))^{-1}\e(z)\vp(z)a \overline{(D\phi_0;D\phi_1)_{(-iz)} \phi_1}$ and so by Leibniz's rule
$$
\beta(a\phi_0)=(IF)'(\theta)=a \overline{(D\phi_0;D\phi_1)_{(-i\theta)} \phi_1}=\alpha(a\phi_0).
$$
Finally, the same argument shows that
$$\gamma(b\phi_0,a\phi_0)= (b\overline{(D\phi_0;D\phi_1)_{(-i\theta)} \phi_1},a \overline{(D\phi_0;D\phi_1)_{(-i\theta)} \phi_1}).$$

To complete the proof we have to prove that $\alpha$ is an isomorphism -- that $\gamma$ is an isomorphism then follows from the five-lemma. This is not automatic because $I:\mathcal F(\phi_0)\to \mathcal H(\phi_1)$ is not surjective.

Anyway, reversing the r\^oles of $\phi_0$ and $\phi_1$ we know that there is a homomorphism of left $\M$ modules $\omega: L^p(\M,\phi_1)\to L^p(\M,\phi_0)$ such that
$$
\omega(x)=a(\overline{(D\phi_1;D\phi_0)_{(-i\theta)}\phi_0})\quad\quad(x=a\phi_1\in L^p(\phi_1)).
$$
We will prove that $\alpha$ and $\omega$ are inverse of each other.

To this end, let us say that $a\in\M$ is ``analytic'' if the map
$$
t\in\R\mapsto a (D\phi_0;D\phi_1)_t\in\M
$$
extends to an entire function we shall denote by $\overline{a (D\phi_0;D\phi_1)_{(\cdot)}}$. This is a ``left'' version of the usual definition; see, e.g., \cite[p. 73]{kosaki}. Let $\mathcal A$ denote the set of ``analytic'' operators in $\M$. It is not hard to see that the $\mathcal A$ is $\sigma$-weak ($=\sigma(\M,\M_*)$) dense in $\M$ and so the set
$
\mathcal A\phi_0=\{a\phi_0: a\in\mathcal A\}
$
is dense in $L^p(\phi_0)$. Thus the proof will be complete if we show that $\omega(\alpha(a\phi_0))=a\phi_0$ for $a\in\mathcal A$. But for such an $a$ we have
$$
\alpha(a\phi_0)=(\overline{a (D\phi_0;D\phi_1)_{(-i\theta})})\cdot\phi_1= a\cdot (\overline{(D\phi_0;D\phi_1)_{(-i\theta})\phi_1})
$$
by the uniqueness of analytic continuation. Therefore, as $\overline{a (D\phi_0;D\phi_1)_{(-i\theta)}}$ belongs to $\M$,
$$
\omega(\alpha(a\phi_0))=\omega\left( (\overline{a (D\phi_0;D\phi_1)_{(-i\theta)}})\cdot\phi_1  \right)= (\overline{a (D\phi_0;D\phi_1)_{(-i\theta)}})\cdot(\overline{(D\phi_1;D\phi_0)_{(-i\theta)}\phi_0})=a\phi_0,
$$
again by the uniqueness of analytic continuation, taking into account that  $(D\phi_0;D\phi_1)_{t}= ((D\phi_1;D\phi_0)_t)^*= ((D\phi_1;D\phi_0)_t)^{-1}$ for $t\in\R$.
\end{proof}

\section*{Acknowledgements}
We thank Mikael de la Salle and Gilles Pisier for their generous (and swift, of course) help during the first stages of this research, by 2007.

We are indebted to the referee who suggested the notion of a lazy centralizer appearing in the text, the form that the statement of Theorem~\ref{main} had to have, and how to prove it.

\end{document}